\setlist[itemize]{noitemsep,topsep=0pt}   
\newtheoremstyle{newstyle}   
{} 			 				 
{12pt plus 1pt} 			 
{\mdseries} 				 
{} 
{\bfseries} 				 
{.} 						 
{ } 						 
{} 							 
\theoremstyle{newstyle}
\theoremstyle{newstyle}
\newtheorem{definition}{Definition}[section]
\newtheorem{theorem}[definition]{Theorem}
\newtheorem{corollary}[definition]{Corollary}
\newtheorem{lemma}[definition]{Lemma}
\newtheorem{proposition}[definition]{Proposition}
\DeclareMathAlphabet{\mathcalligra}{T1}{calligra}{m}{n}
\DeclareMathAlphabet{\mathpzc}{OT1}{pzc}{m}{it}
\title{\textbf{Topological games in Ramsey spaces}}
\author{ Juli\'an C. Cano $\;\;\;$ and $\;\;\;$ Carlos A. Di Prisco }
\date{}
\newcommand{\Addresses}{{
		\bigskip
		\footnotesize
		
		Juli\'{a}n C. Cano, \textsc{Universidad de Los Andes (Bogot\'a).}\par\nopagebreak
		\textit{E-mail address}, J.C. Cano: \texttt{jc.canor@uniandes.edu.co}
		
		\medskip
		
		Carlos A. Di Prisco, \textsc{Universidad de Los Andes (Bogot\'a),  Instituto Venezolano de Investigaciones Cient\'{\i}ficas (Caracas), and Universidad Nebrija (Madrid).}\par\nopagebreak
		\textit{E-mail address}, C.A. Di Prisco: \texttt{ca.di@uniandes.edu.co}
		
}}
\begin{document}

\maketitle

\sloppy

\kern1em
\begin{abstract}
\noindent Topological Ramsey theory studies a class of combinatorial topological spaces, known as topological Ramsey spaces, unifying the essential features of those combinatorial frames where the Ramsey property is equivalent to the Baire property. In this article, we present a general overview of the combinatorial structure of topological Ramsey spaces and their main properties, and we propose an alternative proof of the abstract Ellentuck theorem for a large family of axiomatized topological Ramsey spaces. Additionally, we introduce the notion of selective axiomatized topological Ramsey space, and generalize Kastanas games in order to characterize the Ramsey property for this broad family of topological Ramsey spaces through topological games. 

\medskip

\noindent \textit{Key words and phrases:} topological Ramsey space, Ellentuck topology, Ramsey and Baire properties, topological game.

\medskip

\noindent \textit{2020 Mathematics Subject Classification:} 03E02, 03E05, 05D10.  
\end{abstract}

\section{Introduction}

Infinite dimensional Ramsey theory has developed in different directions since its beginning in the 1960's. For instance, topological Ramsey spaces, first introduced by Carlson and Simpson in \cite{Carlson-Simpson2}, and later reformulated and extended by Todor\v{c}evi\'{c} in \cite{Todorcevic(BookRamsey)}, provide a general framework to study a wide variety of combinatorial properties that have topological counterparts.

\medskip

The prototype example of the topological characterization of an infinite dimensional combinatorial property is Ellentuck's result in \cite{Ellentuck}, which establishes an infinite dimensional version of Ramsey's theorem restricted to Baire measurable partitions of $[\mathbb{N}]^{\omega}$. On the Ramsey property of subsets of $[\mathbb{N}]^{\omega}$, Erd\H{o}s and Rado proved in \cite{Erdos-Rado}, using the axiom of choice, that there exist subsets of $[\mathbb{N}]^{\omega}$ which are not Ramsey. Later, Galvin and Prikry showed in \cite{Galvin-Prikry} that Borel subsets of $[\mathbb{N}]^{\omega}$ in the metrizable topology are Ramsey, and Silver in \cite{Silver} extended this fact to analytic subsets. Finally, Ellentuck in \cite{Ellentuck} characterized topologically Ramsey subsets of $[\mathbb{N}]^{\omega}$ as those having the Baire property with respect to the exponential topology. Ellentuck's proof of this result uses a powerful combinatorial technique, due to Nash-Williams in \cite{NashWilliams}, now known as \textit{combinatorial forcing} (see \cite{Todorcevic(BookRamsey), Todorcevic(BookTopology), Halbeisen}). However, Matet in \cite{Matet2} gives a shorter proof of Ellentuck's result avoiding the use of combinatorial forcing.

\medskip

Ellentuck's theorem can be generalized using the combinatorial properties of two types of families of infinite subsets of integers, namely selective and semiselective coideals. These kinds of coideals make it possible to construct local versions of Ellentuck's space, where the homogeneity of Borel partitions is delimited and located within a fixed coideal. The interaction between Ellenctuk's theorem and coideals on $\mathbb{N}$ has been studied by Mathias, Matet, and Todor\v{c}evi\'{c} in \cite{Mathias, Matet1,  Todorcevic(BookTopology)} for the notion of selectivity, and extended by Farah and Todor\v{c}evi\'{c} in \cite{Farah, Todorcevic(BookRamsey)} for the notion of semiselectivity.

\medskip

Kastanas in \cite{Kastanas} gave a characterization of the Ramsey property of subsets of $[\mathbb{N}]^{\omega}$ in terms of determined topological games. For a fixed partition of $[\mathbb{N}]^{\omega}$ into finite pieces, Kastanas presents an infinite game such that either player has a winning strategy if and only if some homogeneous set for the partition can be constructed. Local versions of Kastanas games have been proposed by Matet in \cite{Matet1} for the class of selective coideals, and extended by Di Prisco, Mijares, and Uzc\'{a}tegui in \cite{DiPrisco-Mijares-Uzcategui} for the class of semiselective coideals.

\medskip

Topological Ramsey spaces deal with the optimal conditions that must be imposed to a combinatorial structure in order to guarantee the existence of homogeneous substructures with respect to finite partitions whose parts are topologically definable. In this regard, topological Ramsey spaces form a general combinatorial framework in which the respective notions of Ramsey property and Baire property are equivalent.

\medskip

The Ellentuck space $[\mathbb{N}]^{\omega}$ (\cite{Ellentuck}) is the quintessential example of a topological Ramsey space, although there are several combinatorial contexts that correspond to specific examples of topological Ramsey spaces, such as: the space of infinite block sequences of finite subsets of $\mathbb{N}$ (\cite{Milliken1}); the space of strong subtrees of a fixed rooted pruned finitely branching tree of height $\omega$ (\cite{Milliken3}); the space of infinite partitions of $\mathbb{N}$ (\cite{Carlson-Simpson1}); the space of reduced echelon infinite matrices over a fixed finite field (\cite{Carlson1}); the space of rapidly increasing infinite sequences of variable words over a fixed alphabet (\cite{Carlson2}); the space of infinite block sequences of vectors of finite subsets of $\mathbb{N}$ (\cite{Gowers1}); the space of infinite block sequences of vectors of located variable words over a fixed alphabet (\cite{Todorcevic(BookRamsey)}); the hierarchy of Ramsey spaces $\{ \mathcal{R}_{\alpha} \}_{\alpha<\omega_{1}}$ corresponding to recursive projections of trees (\cite{Dobrinen-Todorcevic1,Dobrinen-Todorcevic2}); the hierarchy of Ramsey spaces $\{ \mathcal{E}_{k} \}_{k<\omega}$ corresponding to high dimensional Ellentuck spaces (\cite{Dobrinen1}); construction of topological Ramsey spaces through uniform barriers on $\mathbb{N}$ (\cite{Dobrinen2}); construction of topological Ramsey spaces from Fra\"{i}ss\'{e} classes (\cite{Dobrinen-Mijares-Trujillo}); spaces of alternating equivalence relations for finite partitions of $\mathbb{N}$ (\cite{Kawach-Todorcevic}); among others.

\medskip 

The main objective of this article is to propose an alternative proof of the abstract Ellentuck theorem, which concerns the Ramsey property for subsets of a topological Ramsey space as established in \cite{Todorcevic(BookRamsey)}, and to characterize this combinatorial property in terms of infinite games through an abstract version of Kastanas' theorem.

\medskip

The article begins with a quick review of the Ramsey property for subsets of the Ellentuck space of infinite subsets of $\mathbb N$ and the Kastanas game characterization (Section \ref{Ellentuckspace}). Then follows a presentation of Todor\v{c}evi\'{c}'s formulation of topological Ramsey spaces (Section \ref{TRS}). Later, we define strong axiomatized topological Ramsey spaces, and give an alternative proof of the abstract Ellentuck theorem for this class of Ramsey spaces that include a wide family of topological Ramsey spaces (Section \ref{alternativeproof}). Finally, we present an abstract notion of selectivity, and generalize the Kastanas game-theoretic characterization of the Ramsey property to selective axiomatized topological Ramsey spaces (Section \ref{abstractKastanas}).

\medskip
 
At this point, let us recall some important concepts related to any topological space $\mathcal{R}$. First, a set $\mathcal{N} \subseteq \mathcal{R}$ is \textit{nowhere dense} if for each basic open set $\mathcal{O}$ there is a basic open subset $\mathcal{Q} \subseteq \mathcal{O}$ such that $\mathcal{N} \cap \mathcal{Q} = \emptyset$. Also, a set $\mathcal{M} \subseteq \mathcal{R}$ is \textit{meager} if $\mathcal{M} = \bigcup_{n\in \mathbb{N}} \mathcal{N}_{n}$ where each set $\mathcal{N}_{n}$ is nowhere dense. In addition, a set $\mathcal{A} \subseteq \mathcal{R}$ has the \textit{abstract Baire property} if for every basic open set $\mathcal{O}$ there exists some basic open subset $\mathcal{Q} \subseteq \mathcal{O}$ such that either $\mathcal{Q} \subseteq \mathcal{A}$ or $\mathcal{A} \cap \mathcal{Q} = \emptyset$. Finally, a set $\mathcal{B} \subseteq \mathcal{R}$ has the \textit{Baire property} if the set $\mathcal{B} \triangle \mathcal{O}$ is meager for some open set $\mathcal{O}$.

\section{Ramsey property on Ellentuck space} \label{Ellentuckspace}

In this section, we briefly present the Ramsey property for families of infinite subsets of integers, and expose both \textit{Ellentuck theorem} and \textit{Kastanas theorem}. To do this, let us first fix some notation:

\medskip

Given any countably infinite set $X$ and $n\in \mathbb{N}$, we denote by $[X]^{n}$ the set of subsets of $X$ with exactly $n$ elements; also, we denote by $[X]^{<\omega}$ the set of finite subsets of $X$, hence $[X]^{<\omega} = \bigcup_{n\in \mathbb{N}} [X]^{n}$; and analogously, we denote by $[X]^{\omega}$ the set of infinite subsets of $X$.

\medskip

Given $s \in [\mathbb{N}]^{<\omega}$ and $A\in [\mathbb{N}]^{\omega}$, the notation $s \sqsubset A$ means that $s$ is initial segment of $A$. We denote by $A/s$ the set $A/s = \{ n\in A \, | \, s<n \}$, where we write $s<n$ whenever $s=\emptyset$ or $\max s <n$. Lastly, for each $s \in [\mathbb{N}]^{<\omega}$ and each $B\in [\mathbb{N}]^{\omega}$ we define the set $[s,B]$ as follows:
\begin{center}
	$[s,B] = \{ A\in [\mathbb{N}]^{\omega} \,|\, A \subseteq B \wedge s \sqsubset A\}$. 
\end{center}
The collection $\left \{ [s,B] \; | \; s\in [\mathbb{N}]^{<\omega} \wedge B\in [\mathbb{N}]^{\omega} \right \}$ forms an uncountable basis of closed-open sets for a non-metrizable topology over $[\mathbb{N}]^{\omega}$, called the \textit{Ellentuck topology} of $[\mathbb{N}]^{\omega}$. It should be noted that the Ellentuck topology is finer than the metrizable topology, which is obtained by identifying $[\mathbb{N}]^{\omega}$ as a subspace of the Cantor space $2^{\mathbb{N}}$.

\medskip

\textit{Ramsey theorem} is a deep generalization of the combinatorial result known as the \textit{pigeonhole principle}, which states that if any infinite set is partitioned  into a finite number of pieces, then at least one of these pieces is infinite.

\smallskip

\begin{theorem} \label{RamseyTheorem}
	(Ramsey, \cite{Ramsey}). For all $n,k \in \mathbb{N}$ with $n,k\neq 0$, if $[\mathbb{N}]^{n} = \bigcup_{i=1}^{k} G_{i}$ is any finite partition of $[\mathbb{N}]^{n}$, then there is $H\in [\mathbb{N}]^{\omega}$ such that $[H]^{n} \subseteq G_{i}$ for some $i\in\{1,\ldots,k\}$.
\end{theorem}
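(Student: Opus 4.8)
The plan is to prove Ramsey's theorem by a double induction: an outer induction on the dimension $n$, with the base case $n=1$ being precisely the pigeonhole principle, and an inner argument that builds the homogeneous set one element at a time while recursively applying the $(n-1)$-dimensional case. For the inductive step, suppose the result holds for $n-1$ and fix a partition $[\mathbb{N}]^{n} = \bigcup_{i=1}^{k} G_{i}$. I would construct a decreasing sequence of infinite sets $\mathbb{N} = B_{0} \supseteq B_{1} \supseteq \cdots$ together with a strictly increasing sequence of integers $a_{0} < a_{1} < \cdots$ with $a_{m} \in B_{m}$, so that the \emph{color of an $n$-set} $\{a_{m}\} \cup t$, where $t \in [B_{m+1}]^{n-1}$, depends only on $a_{m}$ and not on the particular choice of $t$.

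The key step is this element-by-element refinement. Having chosen $a_{m} \in B_{m}$, consider the induced partition of $[B_{m} \setminus \{a_{m}\}]^{n-1}$ obtained by coloring each $t \in [B_{m} \setminus \{a_{m}\}]^{n-1}$ with the index $i$ for which $\{a_{m}\} \cup t \in G_{i}$. By the induction hypothesis applied in dimension $n-1$, there is an infinite set $B_{m+1} \subseteq B_{m} \setminus \{a_{m}\}$ homogeneous for this induced partition; let $c(a_{m}) \in \{1,\ldots,k\}$ denote its constant color, and then choose $a_{m+1}$ to be the least element of $B_{m+1}$. This produces the sequence $(a_{m})_{m \in \mathbb{N}}$ together with a function $c$ assigning to each $a_{m}$ one of the $k$ colors, with the crucial property that any $n$-element subset of $\{a_{0}, a_{1}, \ldots\}$ whose least element is $a_{m}$ lies in $G_{c(a_{m})}$, since its remaining $n-1$ elements all belong to $B_{m+1}$.

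To finish, apply the pigeonhole principle (the $n=1$ case) to the coloring $m \mapsto c(a_{m})$ of the infinite set $\{a_{0}, a_{1}, \ldots\}$: some color $i$ is attained on an infinite subset $H = \{a_{m} \, | \, m \in I\}$ with $I$ infinite. Then every $t \in [H]^{n}$ has its least element equal to some $a_{m}$ with $m \in I$, so by construction $t \in G_{c(a_{m})} = G_{i}$, giving $[H]^{n} \subseteq G_{i}$ as required.

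The main obstacle is organizing the recursion cleanly so that the homogeneity claim is transparent: the subtlety is ensuring that when we pass from $B_{m}$ to $B_{m+1}$ we discard $a_{m}$ itself and retain an infinite homogeneous tail, so that the tails of all later choices remain available for every earlier pivot. Once the bookkeeping guarantees that the trailing $n-1$ elements of any $n$-set always sit below the diagonal in the nested sequence $(B_{m})$, the color depends only on the pivot $a_{m}$, and the final pigeonhole step is routine.
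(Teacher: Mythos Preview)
Your argument is correct and is precisely the standard proof of the infinite Ramsey theorem: induction on the exponent $n$, with the base case $n=1$ given by the pigeonhole principle, and the inductive step carried out by building a nested sequence $B_{0}\supseteq B_{1}\supseteq\cdots$ of infinite sets together with pivots $a_{m}\in B_{m}$, applying the $(n-1)$-dimensional hypothesis at each stage to stabilize the color of $\{a_{m}\}\cup t$ for $t\in[B_{m+1}]^{n-1}$, and then thinning the sequence of pivots with a final application of pigeonhole. The verification that any $n$-subset of $\{a_{m}:m\in\mathbb{N}\}$ with least element $a_{m}$ has its remaining elements in $B_{m+1}$ (because the $B_{j}$ are decreasing and each $a_{j}\in B_{j}$) is exactly the point you highlight, and it is handled correctly.

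As for comparison with the paper: there is nothing to compare. The paper does not prove Theorem~\ref{RamseyTheorem}; it is quoted as a classical result with a citation to Ramsey's original article \cite{Ramsey} and serves only as motivation for the infinite-dimensional Ramsey property discussed afterwards. Your proof is the textbook one and would be an appropriate addition if a self-contained treatment were desired.
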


\smallskip

A set $\mathcal{A} \subseteq [\mathbb{N}]^{\omega}$ determines a partition of $[\mathbb{N}]^{\omega}$ into two parts, namely $\mathcal{A}$ and $\mathcal{A}^{\complement}$; so, one can ask when there exists some $H\in [\mathbb{N}]^{\omega}$ such that $[H]^{\omega}$ is contained in one of these parts. This question motivated the definition of the Ramsey property, which is a combinatorial notion concerning subsets of $[\mathbb{N}]^{\omega}$ that captures the optimal conditions to guarantee a suitable infinite dimensional version of Ramsey theorem.

\smallskip

\begin{definition} \label{RamseyProperty}
	A set $\mathcal{A} \subseteq [\mathbb{N}]^{\omega}$ is \textit{Ramsey} if for all $N \in [\mathbb{N}]^{\omega}$ and $s\in [N]^{<\omega}$, there is some $M\in [N]^{\omega}$ such that $s\in [M]^{<\omega}$ and either $[s,M] \subseteq \mathcal{A}$ or $[s,M] \cap \mathcal{A} = \emptyset$. Moreover, if the second option always holds, then we say that $\mathcal{A}$ is \textit{Ramsey null}.
\end{definition}

\smallskip

The Ramsey property has two important set-theoretic characterizations. The first one is given by the Ellentuck theorem, which states a topological characterization of the Ramsey property through the Baire property with respect to the Ellentuck topology on $[\mathbb{N}]^{\omega}$. The second one is given by the Kastanas theorem, which states a game-theoretic characterization of the Ramsey property through topological games on $[\mathbb{N}]^{\omega}$.

\smallskip
 
\begin{theorem} \label{EllentuckTheorem}
	(Ellentuck, \cite{Ellentuck}). Consider the space $[\mathbb{N}]^{\omega}$ endowed with the Ellentuck topology.
	\setlist{nolistsep}
	\begin{enumerate}
	\setlength{\itemsep}{0pt} 
		\item[\bf 1.] For any set $\mathcal{A} \subseteq [\mathbb{N}]^{\omega}$, the following statements are equivalent:
		\begin{enumerate}
		\setlength{\itemsep}{0pt}
			\item[(a)] $\mathcal{A}$ is Ramsey null.
			\item[(b)] $\mathcal{A}$ is nowhere dense.
			\item[(c)] $\mathcal{A}$ is meager.
		\end{enumerate}
		\item[\bf 2.] For any set $\mathcal{A} \subseteq [\mathbb{N}]^{\omega}$, the following statements are equivalent:
		\begin{enumerate}
		\setlength{\itemsep}{0pt}
			\item[(a)] $\mathcal{A}$ is Ramsey.
			\item[(b)] $\mathcal{A}$ has the abstract Baire property.
			\item[(c)] $\mathcal{A}$ has the Baire property.
		\end{enumerate}
		\item[\bf 3.] If $[\mathbb{N}]^{\omega} = \bigcup_{i=1}^{k} \mathcal{P}_{i}$ is any finite partition of $[\mathbb{N}]^{\omega}$, where each piece $\mathcal{P}_{i}$ has the Baire property in the Ellentuck topology, then there is $H\in [\mathbb{N}]^{\omega}$ such that $[H]^{\omega} \subseteq \mathcal{P}_{i}$ for some $i\in\{1,\ldots,k\}$.
	\end{enumerate}
\end{theorem}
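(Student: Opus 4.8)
The engine behind all three parts is the Nash--Williams combinatorial-forcing machinery, so I would develop it first and then read off the statements as consequences. Fix $\mathcal{A}\subseteq[\mathbb{N}]^{\omega}$ and say that $B$ \emph{accepts} $s$ if $[s,B]\subseteq\mathcal{A}$, that $B$ \emph{rejects} $s$ if no $C\in[B]^{\omega}$ accepts $s$, and that $B$ \emph{decides} $s$ if it does one of these. Three facts are needed: monotonicity (acceptance and rejection pass to subsets of $B$); that every $[s,B]$ admits $C\in[B]^{\omega}$ deciding $s$ (if $B$ does not reject $s$, some subset accepts it); and the \emph{propagation of rejection}: if $B$ rejects $s$, there is $C\in[B]^{\omega}$ rejecting $s\cup\{n\}$ for every $n\in C/s$. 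This last fact, proved by thinning $B$ through a diagonalization over $\mathbb{N}$, is the first place genuine work is required.

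The combinatorial core is then the assertion that every set $\mathcal{O}$ open in the Ellentuck topology is Ramsey. Given $[s,B]$, I would run a fusion iterating the three facts: first decide $s$, and in the rejecting case propagate rejection along a $\sqsubset$-increasing tower of stems and a $\subseteq$-decreasing tower of infinite sets, so that the diagonal set $C$ rejects \emph{every} finite extension of $s$ reachable inside $C$. If $s$ is accepted we get $[s,C]\subseteq\mathcal{O}$; if $s$ is rejected, then no basic set contained in $[s,C]$ can lie inside $\mathcal{O}$, so $\mathcal{O}\cap[s,C]$ is an open set with empty interior and is therefore empty. This fusion, with the bookkeeping that keeps every earlier decision intact while the stems lengthen and the sets shrink, is the main obstacle of the whole proof.

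The second pillar is that every Ellentuck-meager set is nowhere dense, equivalently Ramsey null. Writing $\mathcal{M}=\bigcup_{n}\mathcal{N}_{n}$ with each $\mathcal{N}_{n}$ nowhere dense, I would, for a given $[s,B]$, perform a second fusion that at stage $n$ uses the nowhere density of $\mathcal{N}_{n}$ to thin the current set off $\mathcal{N}_{n}$ above a lengthening stem; the diagonal $C$ then satisfies $[s,C]\cap\mathcal{M}=\emptyset$, so $\mathcal{M}$ is nowhere dense. Part 1 now closes as a cycle: Ramsey null implies nowhere dense directly, since a disjoint $[s,M]$ is itself a basic subset with stem $s$; nowhere dense trivially implies meager; and meager implies Ramsey null by the fusion just described.

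For Part 2, the implication (a)$\Rightarrow$(b) is immediate, because the witness $[s,M]$ furnished by the Ramsey property is a basic subset of $[s,B]$ on which $\mathcal{A}$ is pure. For (b)$\Rightarrow$(c) I would take $\mathcal{O}$ to be the Ellentuck interior of $\mathcal{A}$ and use the abstract Baire property to see that $\mathcal{A}\setminus\mathcal{O}$ is nowhere dense (every basic set has a basic subset on which $\mathcal{A}$ is pure, and such a subset avoids $\mathcal{A}\setminus\mathcal{O}$), so that $\mathcal{A}\triangle\mathcal{O}=\mathcal{A}\setminus\mathcal{O}$ is meager. The decisive implication (c)$\Rightarrow$(a) combines both pillars: if $\mathcal{A}=\mathcal{O}\triangle\mathcal{M}$ with $\mathcal{O}$ open and $\mathcal{M}$ meager, then inside any $[s,B]$ I apply the combinatorial core to $\mathcal{O}$ to obtain $[s,C_{0}]$ contained in or disjoint from $\mathcal{O}$, and then apply the meager-equals-nowhere-dense fusion to remove $\mathcal{M}$, obtaining $[s,C]\subseteq[s,C_{0}]$ with $[s,C]\cap\mathcal{M}=\emptyset$; on $[s,C]$ the set $\mathcal{A}$ coincides with $\mathcal{O}$, which yields the Ramsey dichotomy with stem $s$. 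Finally, Part 3 is a short induction on $k$: each $\mathcal{P}_{i}$ is Ramsey by Part 2, so starting from $[\emptyset,\mathbb{N}]$ I apply the Ramsey property to $\mathcal{P}_{1}$; either $[\emptyset,M]\subseteq\mathcal{P}_{1}$ for some $M$ and we stop, or $[M]^{\omega}$ misses $\mathcal{P}_{1}$ and is covered by the remaining $k-1$ pieces, so I recurse inside $M$ and reach a homogeneous $H$ in at most $k$ steps.
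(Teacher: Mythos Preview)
Your proof is correct and follows the classical combinatorial-forcing route originating with Ellentuck and Nash--Williams: define accepts/rejects/decides, prove propagation of rejection, run a fusion to show open sets are Ramsey, run a second fusion to show meager sets are Ramsey null, and assemble the three parts.

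The paper, however, deliberately avoids combinatorial forcing and follows Matet instead. Rather than the accept/reject calculus, it introduces $\mathcal{S}_b^{\mathcal{Q}} = \{D : \emptyset \neq [b,D] \subseteq \mathcal{Q}\}$ and proves a single technical lemma (Lemma~\ref{Abstract-Matet-Lemma}): if $\{\mathcal{Q}_i\}_{i\in\mathbb{N}}$ is any sequence with $[a,B] \not\subseteq \bigcap_i \mathcal{Q}_i$ for every $B\in[a,A]$, then some $i$ and some basic $[c,C]\subseteq[a,A]$ have the property that no basic subset of $[c,C]$ lies in $\mathcal{Q}_i$. Part~1 then drops out by taking $\mathcal{Q}_i = \mathcal{N}_i^{\complement}$ for a meager decomposition; Part~2 by first applying the lemma to the eventually constant sequence $\mathcal{Q}_0=\mathcal{X}$, $\mathcal{Q}_i=\mathcal{R}$ for $i>0$, and then to the constant sequence $\mathcal{Q}_i^{*}=\mathcal{X}^{\complement}$. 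The fusion is done once, inside the lemma, and both applications are short deductions. Your approach is more modular and its intermediate statements (open sets are Ramsey, rejection propagates) have independent interest; the paper's approach packages your two fusions into one lemma and, because it never introduces accept/reject, transplants verbatim to the abstract setting of strong topological Ramsey spaces, which is the paper's actual goal.
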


\smallskip

\begin{definition} \label{KastanasGame}
	(Kastanas, \cite{Kastanas}). Let $\mathcal{A} \subseteq [\mathbb{N}]^{\omega}$, $N\in[\mathbb{N}]^{\omega}$, and $s\in [N]^{<\omega}$ be given. The \textit{Kastanas game} $\mathcal{J}(\mathcal{A},N,s)$ is defined as follows: 
	\begin{equation*}
		\begin{matrix}
			\textrm{I} &  & M_{0} &  & M_{1} &  & \cdots & & M_{k} & & \cdots & \\ 
			\textrm{II} &  &  & ( n_{0},N_{0} ) &  & ( n_{1},N_{1} ) &  & \cdots & & ( n_{k},N_{k} ) & & \cdots
		\end{matrix}
	\end{equation*}
    Two players \textrm{I} and \textrm{II} take turns playing sequence of sets $\{ M_{k} \}_{k\in\mathbb{N}} \subseteq [\mathbb{N}]^{\omega}$ and $\{ N_{k} \}_{k\in\mathbb{N}} \subseteq [\mathbb{N}]^{\omega}$ respectively, and additionally, the player \textrm{II} is required to play a sequence of integers $\{n_{k}\}_{k\in\mathbb{N}} \subseteq \mathbb{N}$, such that the moves of both players respect the following conditions:
	
    \medskip
    
	\textit{Rules of the game.} For every $k\in\mathbb{N}$:
	\begin{equation*}
		\textbf{(i) } M_{0} \subseteq N/s \;\;\;\;\;\;\;\; \textbf{(ii) } n_{k}\in M_{k} \;\;\;\;\;\;\;\; \textbf{(iii) } N_{k}\subseteq M_{k}/ \{n_{k}\} \;\;\;\;\;\;\;\; \textbf{(iv) } M_{k+1} \subseteq N_{k}
	\end{equation*}
	Let $N_{\infty} \in [s,N]$ be the set given by $N_{\infty} = s\cup \{ n_{k} \,|\, k\in \mathbb{N} \}$. Then, for the game $\mathcal{J}(\mathcal{A},N,s)$ the following is decided:
	\setlist{nolistsep}
	\begin{itemize}
		\setlength{\itemsep}{0pt}	
		\item[$\bullet$] Player \textrm{I} wins the game if $N_{\infty} \in \mathcal{A}$.
		\item[$\bullet$] Player \textrm{II} wins the game if $N_{\infty} \notin \mathcal{A}$.
	\end{itemize}
\end{definition}


\begin{theorem} \label{KastanasTheorem}
	(Kastanas, \cite{Kastanas}). For any set $\mathcal{A} \subseteq [\mathbb{N}]^{\omega}$, the following statements are equivalent:
	\setlist{nolistsep}
	\begin{enumerate}
		\setlength{\itemsep}{0pt}	
		\item[(a)] $\mathcal{A}$ is Ramsey.
		\item[(b)] For all $N\in[\mathbb{N}]^{\omega}$ and $s\in [N]^{<\omega}$, the Kastanas game $\mathcal{J}(\mathcal{A},N,s)$ is determined, this means that one of the players has a winning strategy in the game.
	\end{enumerate}
\end{theorem}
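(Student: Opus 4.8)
The plan is to prove the two implications separately. The forward implication (a)$\Rightarrow$(b) is elementary and uses only the Ramsey property; the reverse implication (b)$\Rightarrow$(a) I would reduce, by studying the set of outcomes reachable against a fixed winning strategy, to the classical fact that analytic subsets of $[\mathbb{N}]^{\omega}$ are Ramsey (Silver, \cite{Silver}) together with Ellentuck's theorem (Theorem~\ref{EllentuckTheorem}). Throughout I fix $N\in[\mathbb{N}]^{\omega}$ and $s\in[N]^{<\omega}$ and argue locally, since both statements quantify over all such pairs.

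For (a)$\Rightarrow$(b), assume $\mathcal{A}$ is Ramsey and split into the exhaustive dichotomy: either (A1) there is $M\in[\mathbb{N}]^{\omega}$ with $M\subseteq N$, $s\subseteq M$ and $[s,M]\subseteq\mathcal{A}$, or (A2) no such $M$ exists. In case (A1) Player~I wins $\mathcal{J}(\mathcal{A},N,s)$ by the confinement strategy $M_{0}=M/s$: rule (iv) forces $M_{k+1}\subseteq N_{k}\subseteq M_{k}$, so every menu stays inside $M$ and hence $N_{\infty}\in[s,M]\subseteq\mathcal{A}$ whatever Player~II does. In case (A2) I claim Player~II wins. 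After any first move $M_{0}\subseteq N/s$, apply the Ramsey property to the pair $(s,s\cup M_{0})$ to obtain $M'$ with $s\subseteq M'\subseteq s\cup M_{0}$ and either $[s,M']\subseteq\mathcal{A}$ or $[s,M']\cap\mathcal{A}=\emptyset$; since $M'\subseteq N$, hypothesis (A2) rules out the first option, so $[s,M']\cap\mathcal{A}=\emptyset$. Player~II then steers $N_{\infty}$ into $[s,M']$ by playing $n_{0}=\min(M'/s)\in M_{0}$ and $N_{0}=\{m\in M':m>n_{0}\}$, and thereafter $n_{k}=\min M_{k}$, $N_{k}=M_{k}/\{n_{k}\}$; rule (iv) keeps all later menus inside $M'$, so $N_{\infty}\in[s,M']$ and $N_{\infty}\notin\mathcal{A}$.

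For (b)$\Rightarrow$(a), assume every Kastanas game is determined and fix $(s,N)$. By determinacy one player has a winning strategy, say $\sigma$ for Player~I or $\tau$ for Player~II. Attach to each strategy its set of \emph{reachable outcomes}, $\mathcal{S}(\sigma)=\{N_{\infty}:N_{\infty}\text{ arises in some legal run in which I follows }\sigma\}$, and similarly $\mathcal{S}'(\tau)$; directly from the payoff rule, $\mathcal{S}(\sigma)\subseteq\mathcal{A}$ if $\sigma$ wins for~I, and $\mathcal{S}'(\tau)\subseteq\mathcal{A}^{\complement}$ if $\tau$ wins for~II. The heart of the matter is that such an outcome set always contains a basic set $[s,M]$. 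First, $\mathcal{S}(\sigma)$ is \emph{analytic}: it is the image, under the continuous map sending a run to $N_{\infty}$, of the closed set of infinite runs consistent with $\sigma$ inside the tree of finite legal positions. Second, $\mathcal{S}(\sigma)$ is \emph{dense below} the forced first move $M_{0}=\sigma(\emptyset)$: for every $M$ with $s\subseteq M\subseteq s\cup M_{0}$, Player~II can keep $N_{\infty}\in[s,M]$ by playing $n_{0}=\min(M/s)$, $N_{0}=\{m\in M:m>n_{0}\}$ and thereafter $n_{k}=\min M_{k}$, $N_{k}=M_{k}/\{n_{k}\}$, so $[s,M]\cap\mathcal{S}(\sigma)\neq\emptyset$. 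Since analytic sets are Ramsey, the Ramsey dichotomy applied to $\mathcal{S}(\sigma)$ at the pair $(s,s\cup M_{0})$ cannot land on the empty-intersection alternative, and therefore yields $M$ with $[s,M]\subseteq\mathcal{S}(\sigma)\subseteq\mathcal{A}$. The symmetric argument for $\tau$—with Player~I now steering by opening $M_{0}=M/s$ and letting rule (iv) confine the play to $M$—produces $M$ with $[s,M]\subseteq\mathcal{S}'(\tau)\subseteq\mathcal{A}^{\complement}$. In either case the defining condition of Ramseyness holds at $(s,N)$, and as $(s,N)$ was arbitrary, $\mathcal{A}$ is Ramsey.

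The main obstacle is exactly the claim that the outcome set of a winning strategy contains a whole basic set. The naive hope of realizing \emph{all} $X\in[s,M]$ by one uniform family of runs fails, because an adversarial strategy can delete any prescribed next element from its menu, so no fixed $M$ makes every $X\in[s,M]$ reachable in a coherent way; this is precisely why I route the argument through the Ramsey property of the analytic set $\mathcal{S}(\sigma)$ rather than through an explicit construction. The two ingredients—analyticity and density below the first move—are individually routine, and combining them with Theorem~\ref{EllentuckTheorem} (equivalently, with Silver's theorem) upgrades ``meets every subcone'' to ``contains a subcone''. If one prefers to avoid invoking Silver, the same conclusion follows by running a Nash--Williams style combinatorial forcing directly on the tree of $\sigma$-consistent positions, where the density observation is what guarantees that the accepting alternative always prevails; the bookkeeping for that direct fusion is then the only genuinely technical part of the proof.
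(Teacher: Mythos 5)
Your direction (a)$\Rightarrow$(b) is correct: the dichotomy between ``some $M\in[N]^{\omega}$ with $s\subseteq M$ and $[s,M]\subseteq\mathcal{A}$'' and its negation, together with the confinement strategies you describe, does hand a winning strategy to one of the players using only the Ramsey property of $\mathcal{A}$. The gap is in (b)$\Rightarrow$(a), at the step where you assert that the outcome set $\mathcal{S}(\sigma)$ of a winning strategy is analytic because it is ``the image, under the continuous map sending a run to $N_{\infty}$, of the closed set of infinite runs consistent with $\sigma$.'' In the Kastanas game the moves themselves range over the uncountable Polish space $[\mathbb{N}]^{\omega}$, and a strategy is an arbitrary function on finite sequences of such moves. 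The set of $\sigma$-consistent runs is closed only in the product of \emph{discrete} topologies on the move set, which is not second countable, so that observation yields no definability; in the product of the Polish topologies it is an intersection of preimages of the graph of $\sigma$, and the graph of a non-Borel function between Polish spaces is never analytic (an analytic graph forces Borel measurability by Suslin's theorem). Since a winning strategy produced by the axiom of choice need not be Borel, $\mathcal{S}(\sigma)$ need not be analytic, and Silver's theorem cannot be applied to it. Your density observation is correct, but by itself it only says that $\mathcal{S}(\sigma)$ meets every subcone of $[s,s\cup\sigma(\emptyset)]$; an arbitrary set with that property (a Bernstein-type set, say) need not contain a subcone, so without definability there is no dichotomy to invoke.

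The fallback you mention in your closing sentence --- a direct fusion on the tree of $\sigma$-consistent positions --- is therefore not an optional refinement but the actual content of the implication, and it is the route this paper takes: Theorem~\ref{KastanasTheorem} is obtained as a special case of Theorem~\ref{Abstract-Kastanas-Theorem}, whose proof (Propositions~\ref{strategy-I} and~\ref{strategy-II}) constructs, by an explicit fusion that at each stage simulates the opponent's moves over all finitely many relevant approximations, a single $H$ with $[s,H]\subseteq\mathcal{A}$ (respectively $[s,H]\cap\mathcal{A}=\emptyset$, after converting Player II's strategy into a strategy for Player I in the complementary game via a diagonalization lemma). Your ``density below the first move'' computation is essentially the correct seed for that fusion, but the bookkeeping you defer is the proof; as written, the argument for (b)$\Rightarrow$(a) does not close.
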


\smallskip

We notice that the Kastanas game $\mathcal{J}(\mathcal{A},N,s)$ on $[\mathbb{N}]^{\omega}$ is equivalent to a similar topological game in which the moves executed by the player \textrm{II} are non-empty basic open sets of the Ellentuck topology, satisfying the rules that we state in definition \ref{abstract-Kastanas-game} for the abstract context. Thus, theorem \ref{KastanasTheorem} is just a particular case of our theorem \ref{Abstract-Kastanas-Theorem}. 

\medskip

According to Todor\v{c}evi\'{c} in \cite{Todorcevic(BookRamsey)}, the \textit{Ellentuck space} is the combinatorial structure $([\mathbb{N}]^{\omega}, \subseteq , r)$, where $[\mathbb{N}]^{\omega}$ is ordered by the inclusion relation $\subseteq$, and $r$ is the function $r : [\mathbb{N}]^{\omega} \times \mathbb{N} \longrightarrow [\mathbb{N}]^{<\omega}$ given by $r (B, n) = r_{n} (B) = s$ if $s\in [\mathbb{N}]^{n}$ and $s\sqsubset B$. Notice that the basis for Ellentuck topology of $[\mathbb{N}]^{\omega}$ is the collection of all sets $[s,B] = \{ A \in [\mathbb{N}]^{\omega} \, | \, A \subseteq B \wedge (\exists\, n\in\mathbb{N}) (r_{n}(A) = s)\}$, with $s\in [\mathbb{N}]^{<\omega}$ and $B\in [\mathbb{N}]^{\omega}$. This alternative presentation of the Ellentuck space allows us to study the Ramsey property in an abstract sense through the combinatorial framework of topological Ramsey space, as will be discussed in the next section.

\section{Topological Ramsey spaces}\label{TRS}

In this section, we formally present the combinatorial structure of a wide class of topological Ramsey spaces as well as the Ramsey property for this context, and we conclude with an exposition of the \textit{abstract Ellentuck theorem}. The background on topological Ramsey spaces throughout this section is taken from \cite{Todorcevic(BookRamsey)}.

\medskip

Consider a \textit{combinatorial structure} of the form
\begin{center}
	$(\mathcal{R}, \leq, r)$
\end{center}
where $\mathcal{R}$ is a non-empty set of objects, $\leq$ is a quasi-order relation on $\mathcal{R}$, and $r$ is a mapping $r: \mathcal{R}\times\mathbb{N} \rightarrow \mathcal{AR}$ giving us the sequence of approximations mappings $r_{n}: \mathcal{R} \rightarrow \mathcal{AR}_{n}$ such that $r_{n} (A) = r(A,n)$ for every $A\in \mathcal{R}$ and $n\in \mathbb{N}$, where the range of $r$ is the set $\mathcal{AR}$ of all the finite approximations of elements of $\mathcal R$, and the range of $r_{n}$ is the set $\mathcal{AR}_{n}$ of all the $n$-approximations, hence $\mathcal{AR} = \bigcup_{n\in \mathbb{N}} \mathcal{AR}_{n}$. The idea is to identify each object $A\in \mathcal{R}$ with the sequence of its respective approximations $\{r_{n}(A)\}_{n\in\mathbb{N}} \subseteq \mathcal{AR}$. 

\medskip

For every object $B\in \mathcal{R}$, the restriction of the structure $\mathcal{R}$ to $B$ is the set $\mathcal{R} \!\restriction\! B$ defined by all objects $A\in \mathcal{R}$ such that $A$ is below $B$ with respect to the quasi-order $\leq$, hence $\mathcal{R} \!\restriction\! B = \{ A\in\mathcal{R} \,|\, A\leq B \}$.

\medskip

Following Todor\v{c}evi\'{c} in \cite{Todorcevic(BookRamsey)}, we will consider combinatorial structures $(\mathcal{R}, \leq , r)$ that satisfy a special list of combinatorial properties, referred to as the axioms of \textit{metrization}, \textit{finitization}, \textit{amalgamation}, and \textit{homogeneity} (also known as the \textit{abstract pigeonhole principle}).

\bigskip

\textbf{A.1. Axioms of Metrization.}
\setlist{nolistsep}
\begin{enumerate}
	\setlength{\itemsep}{0pt}	
	\item[\bf 1.] $r_{0}(A) = \emptyset$ for all $A \in \mathcal{R}$.
	\item[\bf 2.] For $A,B\in \mathcal{R}$, if $A \neq B$ then $r_{n}(A) \neq r_{n}(B)$ for some $n\in \mathbb{N}$.
	\item[\bf 3.] For $A,B\in \mathcal{R}$, if $r_{n}(A) = r_{m}(B)$ then $n=m$ and $r_{k}(A) = r_{k}(B)$ for each $k \leq n$.
\end{enumerate}

\bigskip

Let $A\in \mathcal{R}$ and $a,b\in \mathcal{AR}$ be given. Then, we write $a\sqsubset A$, and say that $a$ \textit{is initial segment of} $A$, if there is an $n\in \mathbb{N}$ such that $a=r_{n}(A)$. Likewise, we write $a\sqsubseteq b$, and say that $a$ \textit{is initial segment of} $b$, if there are $B\in \mathcal{R}$ and $n,m\in \mathbb{N}$ with $n\leq m$ such that $a=r_{n}(B)$ and $b=r_{m}(B)$. Similarly, we write $a\sqsubset b$, and say that $a$ \textit{is proper initial segment of} $b$, if $a\sqsubseteq b$ and $a\neq b$. Lastly, the \textit{length of $a$}, denoted by $|a|$, is the unique $n\in\mathbb{N}$ such that $a=r_{n}(B)$ for some $B\in \mathcal{R}$.

\bigskip

\textbf{A.2. Axioms of Finitization.} There is a quasi-order $\leq_{\text{fin}}$ on $\mathcal{AR}$ such that:
\setlist{nolistsep}
\begin{enumerate}
	\setlength{\itemsep}{0pt}	
	\item[\bf 1.] For all $b\in \mathcal{AR}$, the set of approximations $\{ a\in \mathcal{AR} \, | \, a \leq_{\text{fin}} b \}$ is finite.
	\item[\bf 2.] Let $A,B\in \mathcal{R}$, then $A \leq B$ if and only if for all $n\in \mathbb{N}$ there is $m\in \mathbb{N}$ such that $r_{n}(A) \leq_{\text{fin}} r_{m}(B)$.
	\item[\bf 3.] For all $a,b,c\in \mathcal{AR}$, if $a\sqsubset b$ and $b\leq_{\text{fin}} c$ then there is $d\in \mathcal{AR}$ such that $d\sqsubset c$ and $a\leq_{\text{fin}} d$. 
\end{enumerate}

\bigskip

For every approximation $a\in \mathcal{AR}$, the set $[a]$ is defined as the family of all objects $A\in \mathcal{R}$ such that $a\sqsubset A$, that is,
\begin{center}
	$[a] = \{ A\in \mathcal{R} \,|\, r_{|a|}(A) = a \}$.
\end{center}
The collection $\{ [a] \,|\, a\in\mathcal{AR} \}$ forms a basis for a topology over $\mathcal{R}$, called the \textit{metrizable topology} of $\mathcal{R}$, whose complete metric $\rho: \mathcal{R} \times \mathcal{R} \rightarrow \mathbb{R}$ is described by $\rho(A,B) = 1/2^{k}$, whenever $A \neq B$ and $k=\min \{ n\in\mathbb{N} \,|\, r_{n}(A) \neq r_{n}(B) \}$.

\medskip

For every approximation $a\in \mathcal{AR}$ and every object $B\in \mathcal{R}$, the set $[a,B]$ is defined as the family of all objects $A\in \mathcal{R}$ such that $a\sqsubset A$ and $A\leq B$, so that
\begin{center}
	$[a,B] = \{ A\in [a] \,|\, A\leq B \} = \{ A\in \mathcal{R} \,|\, A\leq B \wedge r_{|a|}(A) = a \}$.
\end{center}
The collection $\{ [a,B] \,|\, a\in\mathcal{AR} \wedge B\in\mathcal{R} \}$ also forms a basis for a topology over $\mathcal{R}$, called the \textit{Ellentuck topology} of $\mathcal{R}$, which extends and refines the metrizable topology. For simplicity, for each $m\in\mathbb{N}$ we denote: $[m,B] = \{ A\in \mathcal{R} \,|\, A\leq B \wedge r_{m}(A) = r_{m}(B) \} = [r_{m}(B),B]$.

\medskip

The \textit{depth} of an approximation in an object of $\mathcal{R}$ is defined as follows: given $a\in \mathcal{AR}$ and $B\in \mathcal{R}$, the \textit{depth of $a$ in $B$}, denoted by $\mathtt{depth}_{B}(a)$, is 
\begin{equation*}
	\mathtt{depth}_{B}(a) = \begin{cases} \min\{n\in\mathbb{N} \,|\, a\leq_{\text{fin}} r_{n}(B) \} & \text{if such minimum exists.} \\	\infty & \text{otherwise.} \end{cases}
\end{equation*}	

\medskip

For each basic open set $[a,B] \neq \emptyset$ in the Ellentuck topology of $\mathcal{R}$, the restriction of the collection of approximations $\mathcal{AR}$ to $[a,B]$ is the set $\mathcal{AR} \!\restriction\! [a,B] = \{ b\in\mathcal{AR} \,|\, a\sqsubseteq b \wedge \mathtt{depth}_{B}(b) <\infty \}$.

\bigskip

\textbf{A.3. Axioms of Amalgamation.}
\setlist{nolistsep}
\begin{enumerate}
	\setlength{\itemsep}{0pt}
	\item[\bf 1.] Let $a\in \mathcal{AR}$ and $B\in \mathcal{R}$, if $\mathtt{depth}_{B}(a) < \infty$ then $[a,A] \neq \emptyset$ for all $A \in [\mathtt{depth}_{B}(a), B]$, in particular $[a,B] \neq \emptyset$.
		\item[\bf 2.] Let $a\in \mathcal{AR}$ and $A,B\in \mathcal{R}$, if $A \leq B$ and $\mathtt{depth}_{A}(a) \leq \mathtt{depth}_{B}(a) < \infty$ then there is some $A^{\prime} \in [\mathtt{depth}_{B}(a), B]$ such that $[a, A^{\prime}] \subseteq [a,A]$.
\end{enumerate}

\bigskip

Let $[a,A]$ be a basic open set in the Ellentuck topology of $\mathcal{R}$; then, for each $n\in \mathbb{N}$ with $n \geq |a|$, we denote by $r_{n} \text{''}\, [a,A]$ the direct image of the set $[a,A]$ through the $n$-th approximation function $r_{n}$, hence $r_{n} \text{''}\, [a,A] = \{ d\in \mathcal{AR}_{n} \,|\, (\exists D\in [a,A])(d=r_{n}(D)) \} = \{ d\in\mathcal{AR} \!\restriction\! [a,A] \,|\, |d|=n \}$.

\bigskip

\textbf{A.4. Abstract Pigeonhole Principle.} For all $a\in \mathcal{AR}$ and $B\in \mathcal{R}$ such that $\mathtt{depth}_{B}(a) < \infty$, if $\mathcal{O} \subseteq \mathcal{AR}_{|a|+1}$ then there exists some $A\in [\mathtt{depth}_{B}(a),B]$ such that either $r_{|a|+1} \text{''}\, [a,A] \subseteq \mathcal{O}$ or $\mathcal{O} \cap r_{|a|+1} \text{''}\, [a,A] = \emptyset$.

\bigskip

A combinatorial structure $(\mathcal{R},\leq,r)$ is said to be \textit{metrically closed} if $\mathcal{R}$ is a closed subspace of $\mathcal{AR}^{\mathbb{N}}$ with the product topology, where each object of $\mathcal{R}$ has been identified with the sequence of its	initial segments. Naturally, we consider the product topology on $\mathcal{AR}^{\mathbb{N}}$ assuming that $\mathcal{AR}$ has the discrete topology, in which case the subspace topology on $\mathcal{R}$ inherited from $\mathcal{AR}^{\mathbb{N}}$ coincides exactly with the metrizable topology of $\mathcal{R}$.

\medskip

The Ellentuck space $([\mathbb{N}]^{\omega}, \subseteq , r)$ is the quintessential example of a metrically closed combinatorial structure satisfying axioms \textbf{A.1}, \textbf{A.2}, \textbf{A.3}, and \textbf{A.4}. This fact motivates a natural extension of definition \ref{RamseyProperty} to a more abstract context. Accordingly, the Ramsey property for general combinatorial structures $(\mathcal{R},\leq,r)$ is defined as follows:

\smallskip

\begin{definition} \label{RamseySet}
Let $(\mathcal{R},\leq,r)$ be a combinatorial structure. A set $\mathcal{X} \subseteq \mathcal{R}$ is said to be \textit{Ramsey} if for every $[a,A] \neq \emptyset$ there exists some $B\in [a,A]$ such that either $[a,B]\subseteq \mathcal{X}$ or $[a,B] \cap \mathcal{X} = \emptyset$. Moreover, if the second option always holds, then we say that $\mathcal{X}$ is \textit{Ramsey null}.
\end{definition}

\smallskip

It is worth mentioning that, in \cite{Todorcevic(BookRamsey)}, Todor\v{c}evi\'{c} defines a \textit{topological Ramsey space} as a combinatorial structure $(\mathcal{R},\leq,r)$, equipped with the Ellentuck topology, for which every Baire set is Ramsey and every meager set is Ramsey null (see \cite[Definition 5.3]{Todorcevic(BookRamsey)}). Subsequently, Todor\v{c}evi\'{c} proves the \textit{abstract Ellentuck theorem}, which states that if $(\mathcal{R},\leq,r)$ is a metrically closed combinatorial structure endowed with the Ellentuck topology and satisfying axioms \textbf{A.1}, \textbf{A.2}, \textbf{A.3}, and \textbf{A.4}, then $(\mathcal{R},\leq,r)$ forms a topological Ramsey space (see \cite[Theorem 5.4]{Todorcevic(BookRamsey)}).

\smallskip

\begin{definition} 
	(Todor\v{c}evi\'{c}, \cite{Todorcevic(BookRamsey)}). A combinatorial structure $(\mathcal{R},\leq,r)$ endowed with the Ellentuck topology is said to be a \textit{topological Ramsey space} if every subset of $\mathcal{R}$ with the Baire property is Ramsey and every meager subset of $\mathcal{R}$ is Ramsey null.
\end{definition}

\smallskip

In this article, we focus exclusively on combinatorial structures $(\mathcal{R}, \leq , r)$, equipped with the Ellentuck topology, that are metrically closed and satisfy the axioms of metrization, finitization, amalgamation, and homogeneity. This class of structures encompasses a wide range of topological Ramsey spaces with rich combinatorial properties, some of which we examine in detail. There exist examples of topological Ramsey spaces in the sense of Todor\v{c}evi\'{c}'s definition that do not meet the requirements of our framework (see, for instance, \cite[Corollary 7.24 and Remark 7.27]{Todorcevic(BookRamsey)}). We will use the term \textit{axiomatized topological Ramsey space} to refer to those closed topological structures that satisfy axioms \textbf{A.1}, \textbf{A.2}, \textbf{A.3}, and \textbf{A.4}, as stated formally in the following definition.

\smallskip

\begin{definition}
    A metrically closed combinatorial structure $(\mathcal{R},\leq,r)$ is called an \textit{axiomatized topological Ramsey space} if $\mathcal{R}$ is endowed with the Ellentuck topology and $(\mathcal{R},\leq,r)$ satisfies axioms \textbf{A.1}, \textbf{A.2}, \textbf{A.3}, and \textbf{A.4}.
\end{definition}

\smallskip

A topological Ramsey space $(\mathcal{R},\leq,r)$ does not have isolated points if each of its non-empty basic open sets is infinite. In \cite{Todorcevic(BookRamsey)}, Todor\v{c}evi\'{c} proved that for every axiomatized topological Ramsey space $(\mathcal{R},\leq,r)$ without isolated points there exists a set $\mathcal{X} \subseteq \mathcal{R}$ that is not Ramsey, and the use of the axiom of choice is essential in the proof. Furthermore, observe that $(\mathcal{R},\leq,r)$ does not have isolated points whenever the set of approximations $r_{|a|+1} \text{''}\, [a,B]$ is infinite for every basic open set $[a,B] \neq \emptyset$.  

\medskip

An important tool in the study of topological Ramsey spaces is the concept of \textit{fusion sequence}, which is a special sequence formed by basic open sets. Fusion sequences allow for the recursive construction of new objects in the space with specific and often highly structured properties.

\smallskip

\begin{definition} \label{FusionSequence}	
Let $(\mathcal{R},\leq,r)$ be an axiomatized topological Ramsey space. A sequence $\{[n_{k}, Y_{k}]\}_{k\in\mathbb{N}}$ of basic open sets of $\mathcal{R}$ is a \textit{fusion sequence} if:
\begin{itemize}
	\justifying
	\item[$\bullet$] $\{n_{k}\}_{k\in\mathbb{N}} \subseteq \mathbb{N}$ is a strictly increasing sequence. 
	\item[$\bullet$] $Y_{k+1} \in [n_{k},Y_{k}]$ for all $k\in\mathbb{N}$.
\end{itemize}

\medskip

The limit of the fusion sequence $\{[n_{k}, Y_{k}]\}_{k\in\mathbb{N}}$ always exists and it is the unique object $Y_{\infty} \in \mathcal{R}$, denoted by $Y_{\infty}= \lim_{n_{k}} Y_{k}$, such that $Y_{\infty} \in [n_{k},Y_{k}]$ for all $k\in \mathbb{N}$; hence, $r_{n_{k}}(Y_{\infty}) = r_{n_{k}}(Y_{k})$ for all $k\in \mathbb{N}$. In other words, $Y_{\infty}= \lim_{n_{k}} Y_{k}$ if and only if $\{Y_{\infty}\} = \bigcap_{k\in\mathbb{N}} [n_{k}, Y_{k}] = \bigcap_{k\in\mathbb{N}} [r_{n_{k}}(Y_{k})]$.
\end{definition}

\smallskip

We now present the main result of topological Ramsey theory, which corresponds to the natural generalization of theorem \ref{EllentuckTheorem} for the abstract context. In this regard, the abstract Ellentuck theorem establishes the equivalence between the Ramsey property and the Baire property on the combinatorial structure of axiomatized topological Ramsey spaces. In other words, the abstract Ellentuck theorem essentially states that every axiomatized topological Ramsey space is itself a topological Ramsey space.

\smallskip

\begin{theorem} \label{Abstract-Ellentuck-Theorem}
	[\textit{Abstract Ellentuck Theorem}]. (Todor\v{c}evi\'{c}, \cite{Todorcevic(BookRamsey)}). Let $(\mathcal{R},\leq,r)$ be an axiomatized topological Ramsey space. Then:
	\setlist{nolistsep}
	\begin{enumerate}
		\setlength{\itemsep}{0pt} 
		\item[\bf 1.]  For any $\mathcal{X} \subseteq \mathcal{R}$, the following statements are equivalent:
		\begin{enumerate}
			\setlength{\itemsep}{0pt}
			\item[(a)] $\mathcal{X}$ is Ramsey null.
			\item[(b)] $\mathcal{X}$ is nowhere dense.
			\item[(c)] $\mathcal{X}$ is meager.
		\end{enumerate}
		\item[\bf 2.] For any $\mathcal{X} \subseteq \mathcal{R}$, the following statements are equivalent:
		\begin{enumerate}
			\setlength{\itemsep}{0pt}
			\item[(a)] $\mathcal{X}$ is Ramsey.
			\item[(b)] $\mathcal{X}$ has the abstract Baire property.
			\item[(c)] $\mathcal{X}$ has the Baire property.
		\end{enumerate}
		\item[\bf 3.] If $\mathcal{R} =\bigcup_{i=1}^{k} \mathcal{P}_{i}$ is any finite partition of the space $\mathcal{R}$, where each piece $\mathcal{P}_{i}$ has the Baire property, then there is $H\in \mathcal{R}$ such that $\mathcal{R} \!\restriction\! H \subseteq \mathcal{P}_{i}$ for some $i\in\{1,\ldots,k\}$.
	\end{enumerate}
\end{theorem}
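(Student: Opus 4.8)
The plan is to isolate a single combinatorial engine --- Ellentuck's \emph{combinatorial forcing}, adapted to the axioms \textbf{A.1}--\textbf{A.4} --- and then obtain all three parts by soft topological bookkeeping on top of it. Fix a target set $\mathcal{X}\subseteq\mathcal{R}$. For $a\in\mathcal{AR}$ and $B\in\mathcal{R}$ with $\mathtt{depth}_{B}(a)<\infty$, I would say that $B$ \emph{accepts} $a$ if $[a,B]\subseteq\mathcal{X}$, that $B$ \emph{rejects} $a$ if no $C\in[\mathtt{depth}_{B}(a),B]$ accepts $a$, and that $B$ \emph{decides} $a$ if it does one of the two. The immediate facts are that acceptance and rejection both pass down to every $C\in[\mathtt{depth}_{B}(a),B]$ (using $[a,C]\subseteq[a,B]$), and that every $a$ is decided by some $C\in[\mathtt{depth}_{B}(a),B]$ (if $B$ does not reject $a$, then some such $C$ accepts it).

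The Central Lemma I am aiming for is that every metrically open $\mathcal{X}$ is Ramsey: for every nonempty $[a,A]$ there is $B\in[a,A]$ with $[a,B]\subseteq\mathcal{X}$ or $[a,B]\cap\mathcal{X}=\emptyset$. First I would run a fusion argument (Definition \ref{FusionSequence}), enumerating the approximations (finite by \textbf{A.2}) by depth and repeatedly shrinking via \textbf{A.3}, to produce $B_{0}\in[a,A]$ that decides every $b\in\mathcal{AR}\!\restriction\![a,B_{0}]$. If $B_{0}$ accepts $a$ we are done. If $B_{0}$ rejects $a$, the key step applies the abstract pigeonhole \textbf{A.4} to the set $\mathcal{O}=\{b\in\mathcal{AR}_{|a|+1}: B_{0}\text{ accepts }b\}$: the homogeneous ``accept'' side is impossible, since if some $A'$ accepted every one-step extension of $a$ then every $X\in[a,A']$ would lie in $\mathcal{X}$ and $A'$ would accept $a$, contradicting that rejection descends to $A'$. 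Hence \textbf{A.4} yields $A'$ rejecting all one-step extensions, and iterating this over depth through a second fusion produces $B\in[a,A]$ rejecting every $b\in\mathcal{AR}\!\restriction\![a,B]$. For such $B$, openness of $\mathcal{X}$ forces $[a,B]\cap\mathcal{X}=\emptyset$: any $X$ in the intersection would satisfy $[r_{n}(X),X]\subseteq\mathcal{X}$ for some $n$, and amalgamation \textbf{A.3} would convert this into a $C\in[\mathtt{depth}_{B}(r_{n}(X)),B]$ accepting $r_{n}(X)$, contradicting rejection.

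With the Central Lemma in hand, Part 1 and Part 2 follow by standard moves. For Part 1, (a)$\Rightarrow$(b) is immediate and (b)$\Rightarrow$(c) trivial; for (b)$\Rightarrow$(a) I would apply the Central Lemma to the \emph{open dense} exterior $\mathcal{R}\setminus\overline{\mathcal{X}}$: density kills the disjoint alternative, so $[a,B]\subseteq\mathcal{R}\setminus\overline{\mathcal{X}}$ and hence $[a,B]\cap\mathcal{X}=\emptyset$, giving Ramsey nullity. The remaining implication (c)$\Rightarrow$(a), that a meager set $\bigcup_{n}\mathcal{N}_{n}$ is Ramsey null, is the $\sigma$-ideal property and needs a diagonal fusion: shrink successively to avoid $\mathcal{N}_{0},\mathcal{N}_{1},\dots$ on deeper and deeper approximations and take the fusion limit $B$, so that any $X\in[a,B]\cap\mathcal{N}_{n}$ is excluded at stage $n$. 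For Part 2, (a)$\Rightarrow$(b) is immediate; (b)$\Rightarrow$(c) follows by taking $\mathcal{O}$ to be the interior of $\mathcal{X}$ and checking via the abstract Baire property that every basic open set has a basic open subset disjoint from $\mathcal{X}\setminus\mathcal{O}$, hence $\mathcal{X}\setminus\mathcal{O}$ is nowhere dense; and (c)$\Rightarrow$(a) decomposes $\mathcal{X}=\mathcal{O}\triangle\mathcal{M}$ with $\mathcal{O}$ open and $\mathcal{M}$ meager, applies the Central Lemma to $\mathcal{O}$ and then Part 1 to $\mathcal{M}$ inside the resulting $[a,B_{1}]$, on which $\mathcal{X}$ coincides with $\mathcal{O}$ and is therefore decided. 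Part 3 is then a finite iteration of Part 2: starting from any $A_{0}$ and using $r_{0}(\cdot)=\emptyset$ so that $[\emptyset,B]=\mathcal{R}\!\restriction\! B$, decide $\mathcal{P}_{1},\dots,\mathcal{P}_{k}$ one at a time along a decreasing chain $A_{0}\geq H_{1}\geq\cdots$, either landing inside some $\mathcal{P}_{i}$ or excluding it; since the pieces cover $\mathcal{R}$, the process stops within $k$ steps at the desired $H$.

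The main obstacle is the Central Lemma, and inside it the rejection-propagation step: turning ``decides everything and rejects $a$'' into ``rejects every one-step extension'' genuinely needs \textbf{A.4} to produce pigeonhole homogeneity on $\mathcal{AR}_{|a|+1}$, while \textbf{A.3} is what keeps the two fusion sequences inside the prescribed $[\mathtt{depth}_{B}(a),B]$ sets and what licenses the final contradiction with openness. The second delicate point is the diagonal fusion for the $\sigma$-ideal property in Part 1 (c)$\Rightarrow$(a), where I must align the bookkeeping so that the single limit object $B$ simultaneously avoids all $\mathcal{N}_{n}$ on a tail of approximations; everything else is soft topology.
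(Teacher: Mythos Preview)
Your proposal follows the classical Todor\v{c}evi\'{c} route via combinatorial forcing (the accept/reject/decide calculus), and the outline is essentially correct --- this is exactly the argument the paper \emph{cites} as the original proof. The paper's own contribution, however, is a deliberately different path: it introduces a strengthening \textbf{A.3*} of the amalgamation axiom (defining \emph{strong} topological Ramsey spaces) and proves a single technical engine, Lemma~\ref{Abstract-Matet-Lemma}, modeled on Matet's short proof of the classical Ellentuck theorem. That lemma says: given $\{\mathcal{Q}_i\}_{i\in\mathbb{N}}$ with $[a,B]\not\subseteq\bigcap_i\mathcal{Q}_i$ for every $B\in[a,A]$, there exist $i$ and $[c,C]\subseteq[a,A]$ such that no nonempty basic subset of $[c,C]$ is contained in $\mathcal{Q}_i$. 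Parts~1 and~2 then fall out by plugging in the complements of a meager decomposition, or the constant sequence $\mathcal{X}$ (and then $\mathcal{X}^{\complement}$), with no separate ``open sets are Ramsey'' lemma and no accept/reject machinery at all. What your approach buys is full generality (all topological Ramsey spaces, under \textbf{A.1}--\textbf{A.4}); what the paper's approach buys is a one-lemma derivation avoiding combinatorial forcing, at the price of the extra hypothesis \textbf{A.3*}, which the authors leave open whether it is genuinely restrictive.

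One small slip in your outline: your Central Lemma is stated for \emph{metrically} open $\mathcal{X}$, but in (b)$\Rightarrow$(a) of Part~1 you apply it to the Ellentuck-open set $\mathcal{R}\setminus\overline{\mathcal{X}}$. The standard fix is simply to state and prove the Central Lemma for Ellentuck-open sets; the only place openness enters is the final contradiction, and there \textbf{A.3.2} already converts a witnessing $[c,D]\subseteq\mathcal{X}$ with $D\leq B$ into some $D'\in[\mathtt{depth}_{B}(c),B]$ accepting $c$, just as you sketched. Alternatively, drop the separate (b)$\Rightarrow$(a) step and let it follow from (c)$\Rightarrow$(a).
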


\smallskip

The proof of the abstract Ellentuck theorem given by Todor\v{c}evi\'{c} in \cite{Todorcevic(BookRamsey)} makes use of a combinatorial technique known as \textit{abstract combinatorial forcing}, whose essential idea is to force the recursive construction of an object in an axiomatized topological Ramsey space such that it turns out to be homogeneous with respect to a fixed partition of the space that is finite and Baire measurable. In the next section, we present a new proof of the abstract Ellentuck theorem for a large family of axiomatized topological Ramsey spaces in which we do not use combinatorial forcing.


\section{On an alternative proof of the abstract Ellentuck theorem} \label{alternativeproof}

In this section, we introduce a generalization of the axioms of amalgamation to define the class of strong axiomatized topological Ramsey spaces, with the aim of proposing an alternative proof of the abstract Ellentuck theorem for this new broad class of topological Ramsey spaces. 

\medskip

Let $(\mathcal{R},\leq,r)$ be a combinatorial structure equipped with the Ellentuck topology, and let $A,B \in \mathcal{R}$, $b\in\mathcal{AR}$, and $m\in\mathbb{N}$ be such that $B\leq A$ and $\mathtt{depth}_{A}(b) \leq m$. We define the set $\langle b,B \rangle_{m}^{A}$ as follows:
\begin{equation*}
		\langle b,B \rangle_{m}^{A} = \{ X\in [b,B] \;|\; \mathtt{depth}_{A}(r_{|b|+1}(X)) > m \}.
\end{equation*}
Notice that $\langle b,B \rangle_{m}^{A}$ is an open set of $\mathcal{R}$, since $\langle b,B \rangle_{m}^{A} = \bigcup \{ [c,B] \,|\, c\in r_{|b|+1} \text{''}\, [b,B] \wedge \mathtt{depth}_{A}(c)>m \}$. Furthermore, notice that if $\mathtt{depth}_{A}(b) = m$ then $\langle b,B \rangle_{m}^{A} = [b,B]$. 

\bigskip

\textbf{A.3*. Axioms of Strong Amalgamation.}
\setlist{nolistsep}
\begin{enumerate}
	\setlength{\itemsep}{0pt}
	\item[\bf 1.] Let $b\in \mathcal{AR}$ and $A\in \mathcal{R}$, if $\mathtt{depth}_{A}(b)\leq m < \infty$ then $\langle b,B \rangle_{m}^{A} \neq \emptyset$ for all $B \in [m, A]$, in particular $\langle b,A \rangle_{m}^{A} \neq \emptyset$.
	\item[\bf 2.] Let $b\in \mathcal{AR}$ and $A,B\in \mathcal{R}$, if $B \leq A$ and $\mathtt{depth}_{B}(b) \leq \mathtt{depth}_{A}(b) \leq m <\infty$ then there is some $B^{\prime} \in [m, A]$ such that $\langle b,B^{\prime} \rangle_{m}^{A} \subseteq \langle b,B \rangle_{m}^{A}$.
\end{enumerate}

\bigskip

We point out that the axioms of strong amalgamation \textbf{A.3*} imply the axioms of amalgamation \textbf{A.3}, since taking  $m=\mathtt{depth}_{A}(b)$, we can see that \textbf{A.3} is a particular case of \textbf{A.3*}.

\smallskip
		
\begin{definition} \label{Strong-topological-Ramsey-space}
	A metrically closed combinatorial structure $(\mathcal{R},\leq,r)$ is called a \textit{strong axiomatized topological Ramsey space} if $\mathcal{R}$ is endowed with the Ellentuck topology and $(\mathcal{R},\leq,r)$ satisfies axioms \textbf{A.1}, \textbf{A.2}, \textbf{A.3*}, and \textbf{A.4}. 
\end{definition}

\smallskip

It should be noted that an axiomatized topological Ramsey space $(\mathcal{R},\leq,r)$ satisfies axiom \textbf{A.3*.1} if and only if, for every basic open set $[a,B] \neq \emptyset$, the set of approximations $r_{|a|+1} \text{''}\, [a,B]$ is infinite. Therefore, if the space $(\mathcal{R},\leq,r)$ is strong, it must not have isolated points.

\medskip

Additionally, it is clear that every strong axiomatized topological Ramsey space is itself an axiomatized topological Ramsey space, since axioms \textbf{A.3*} imply axioms \textbf{A.3}. It is also known that there exist examples of topological Ramsey spaces without isolated points that are not axiomatized (see, for instance, \cite{Dobrinen-Zucker}). However, we do not yet know the answer to the following question: is there any axiomatized topological Ramsey space without isolated points that is not strong?

\medskip

Next, we present an alternative proof of the abstract Ellentuck theorem restricted to the collection of strong axiomatized topological Ramsey spaces. The new proof of theorem \ref{Abstract-Ellentuck-Theorem} that we propose here is inspired by a short proof of theorem \ref{EllentuckTheorem} given by Matet in \cite{Matet2}. 

\medskip
 
Let $(\mathcal{R},\leq,r)$ be any axiomatized topological Ramsey space. Given $\mathcal{Q} \subseteq \mathcal{R}$ and $b\in \mathcal{AR}$, we define the set $\mathcal{S}_{b}^{\mathcal{Q}} \subseteq \mathcal{R}$ as the collection of all objects $D\in \mathcal{R}$ such that $\mathtt{depth}_{D}(b) < \infty$ and $[b,D] \subseteq \mathcal{Q}$, that is,
\begin{center}
	$\mathcal{S}_{b}^{\mathcal{Q}} = \{ D\in \mathcal{R} \,|\, \emptyset \neq [b,D] \subseteq \mathcal{Q} \}$.
\end{center}

\smallskip
		
\begin{lemma} \label{Small-Lemma}
	Let $(\mathcal{R},\leq,r)$ be a strong axiomatized topological Ramsey space. For each $\mathcal{Q} \subseteq \mathcal{R}$, $m\in\mathbb{N}$, $A\in\mathcal{R}$, and $b,d\in\mathcal{AR}$, if $\mathtt{depth}_{A}(b) \leq \mathtt{depth}_{A}(d) \leq m$ and $b \leq_{\text{fin}} d$, then there is some $B\in[m,A]$ such that either $\langle d,B \rangle_{m}^{A} \subseteq \mathcal{S}_{b}^{\mathcal{Q}}$ or $\langle d,B \rangle_{m}^{A} \cap \mathcal{S}_{b}^{\mathcal{Q}} = \emptyset$.
\end{lemma}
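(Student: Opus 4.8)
The plan is to prove this by a single application of the abstract pigeonhole principle \textbf{A.4} to separate the relevant one-step approximations of $d$, followed by a fusion governed by strong amalgamation \textbf{A.3*} that localizes the conclusion to $\langle d,B\rangle_m^A$. Before anything else I would record two structural simplifications. First, for every $X\in[d,B]$ one has $b\leq_{\text{fin}} d=r_{|d|}(X)$, so $\mathtt{depth}_X(b)\leq|d|<\infty$ and hence $[b,X]\neq\emptyset$ by \textbf{A.3.1}; thus on $[d,B]$ the condition $X\in\mathcal{S}_b^{\mathcal{Q}}$ reduces to $[b,X]\subseteq\mathcal{Q}$, the nonemptiness clause being automatic. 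Second, I would note the monotonicity that drives the argument: if $c=r_{|d|+1}(X)$ and $D\in[c,A]$ satisfies $[b,D]\subseteq\mathcal{Q}$, then for each $X\in[c,D]$ every $Y\in[b,X]$ lies in $[b,D]$, so in fact $[c,D]\subseteq\mathcal{S}_b^{\mathcal{Q}}$. In particular $\mathcal{S}_b^{\mathcal{Q}}$ is open and the property ``$[b,\cdot]\subseteq\mathcal{Q}$'' is inherited by $\leq$-smaller objects.

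With these in hand I would color the one-step extensions of $d$. Put $\mathcal{O}=\{c\in\mathcal{AR}_{|d|+1}\,|\,d\sqsubset c\ \wedge\ (\exists D\in[c,A])\,[b,D]\subseteq\mathcal{Q}\}$ and apply \textbf{A.4} at $(d,A)$, which is legitimate since $\mathtt{depth}_A(d)\leq m<\infty$, to obtain $A'\in[\mathtt{depth}_A(d),A]$ with $r_{|d|+1}\text{''}\,[d,A']$ either contained in or disjoint from $\mathcal{O}$. In the disjoint case no $c$ occurring in $[d,A']$ admits any witness $D$, so taking $D=X$ shows $[b,X]\not\subseteq\mathcal{Q}$, i.e.\ $X\notin\mathcal{S}_b^{\mathcal{Q}}$, for every $X\in\langle d,A'\rangle_m^A$; a single use of \textbf{A.3*.2} with the approximation $d$ then yields $B\in[m,A]$ with $\langle d,B\rangle_m^A\subseteq\langle d,A'\rangle_m^A$, which is therefore disjoint from $\mathcal{S}_b^{\mathcal{Q}}$.

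The substantive case, and the main obstacle, is the ``contained'' alternative, where every relevant $c$ carries a witness $D_c\in[c,A]$ with $[b,D_c]\subseteq\mathcal{Q}$ and hence $[c,D_c]\subseteq\mathcal{S}_b^{\mathcal{Q}}$. Existence of one good $D_c$ per $c$ does not by itself force $[b,X]\subseteq\mathcal{Q}$ for all $X\in\langle d,B\rangle_m^A$, since $[b,X]$ depends on the entire tail of $X$ and not merely on $c$; this is precisely where the strengthening \textbf{A.3*} beyond ordinary amalgamation is required. I would resolve it by a fusion: starting from $A'$ refined into $[m,A]$, build a fusion sequence $\{[n_k,Y_k]\}$ in which, at each stage, the finitely many newly relevant first steps $c$ (finite by \textbf{A.2.1}) are handled by strong amalgamation \textbf{A.3*}, forcing $[c,Y_{k+1}]\subseteq\mathcal{R}\!\restriction\! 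D_c$. Writing $B=\lim_{n_k}Y_k\in[m,A]$, every $X\in\langle d,B\rangle_m^A$ then lies in $[c,B]\subseteq[c,D_c]\subseteq\mathcal{S}_b^{\mathcal{Q}}$ for its own $c=r_{|d|+1}(X)$, using the downward heredity of ``$[b,\cdot]\subseteq\mathcal{Q}$''; hence $\langle d,B\rangle_m^A\subseteq\mathcal{S}_b^{\mathcal{Q}}$.

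The delicate points I would watch are the depth bookkeeping needed to invoke \textbf{A.3*.2} (checking $\mathtt{depth}_{A'}(d)\leq\mathtt{depth}_A(d)\leq m$, which holds because $A'$ agrees with $A$ below level $\mathtt{depth}_A(d)$, so $d\leq_{\text{fin}} r_{\mathtt{depth}_A(d)}(A')$) and the scheduling of the fusion, so that each relevant $c$ is treated before its depth is passed and so that the limit remains in $[m,A]$. Here \textbf{A.3*.1} guarantees that the sets $\langle\cdot\rangle_m^A$ stay nonempty throughout, ensuring the construction never stalls.
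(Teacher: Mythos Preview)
Your approach works much harder than necessary and, in the ``contained'' case, the fusion you sketch is not actually justified. The paper's proof of this lemma uses neither \textbf{A.4} nor any fusion: it is a two-line case split. If $\langle d,A\rangle_m^A\cap\mathcal{S}_b^{\mathcal{Q}}=\emptyset$, take $B=A$. Otherwise pick any $C\in\langle d,A\rangle_m^A\cap\mathcal{S}_b^{\mathcal{Q}}$; since $C\leq A$ and $\mathtt{depth}_C(d)\leq\mathtt{depth}_A(d)\leq m$, a \emph{single} application of \textbf{A.3*.2} at the approximation $d$ (not at the one-step extensions $c$) yields $B\in[m,A]$ with $\langle d,B\rangle_m^A\subseteq\langle d,C\rangle_m^A$. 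Every $D$ in the latter set lies in $[d,C]$, hence $[b,D]\subseteq[b,C]\subseteq\mathcal{Q}$, so $\langle d,B\rangle_m^A\subseteq\mathcal{S}_b^{\mathcal{Q}}$. You yourself recorded exactly the monotonicity fact that makes this work (``$[b,\cdot]\subseteq\mathcal{Q}$ is inherited by $\leq$-smaller objects''), but then did not exploit it: once you have even one $C$ in the intersection, downward heredity propagates membership in $\mathcal{S}_b^{\mathcal{Q}}$ to all of $[d,C]$, and \textbf{A.3*.2} localizes this to a $\langle d,B\rangle_m^A$. No coloring of one-step extensions and no fusion are needed.

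As for the gap: in your ``contained'' alternative you have, for each relevant $c$, a witness $D_c\in[c,A]$ with $[b,D_c]\subseteq\mathcal{Q}$, and you want to build a fusion sequence $\{[n_k,Y_k]\}$ with $[c,Y_{k+1}]\subseteq\mathcal{R}\!\restriction\! D_c$. But both \textbf{A.3.2} and \textbf{A.3*.2} require the smaller object ($D_c$) to lie below the ambient one ($Y_k$), and nothing guarantees $D_c\leq Y_k$: the witness was chosen in $[c,A]$, not in $[c,Y_k]$, and your coloring $\mathcal{O}$ only asserts existence of a witness below $A$. So the step ``handled by strong amalgamation \textbf{A.3*}, forcing $[c,Y_{k+1}]\subseteq\mathcal{R}\!\restriction\! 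D_c$'' does not follow from the axioms as stated. One could try to repair this by re-choosing witnesses at each stage or by re-running the pigeonhole inside each $Y_k$, but that is a substantial rewrite --- and entirely unnecessary, given the one-line argument above.
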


\begin{proof}
	If $\langle d,A \rangle_{m}^{A} \cap \mathcal{S}_{b}^{\mathcal{Q}} = \emptyset$ then we can simply take $B=A$. Otherwise, if $\langle d,A \rangle_{m}^{A} \cap \mathcal{S}_{b}^{\mathcal{Q}} \neq \emptyset$ then we fix $C\in \langle d,A \rangle_{m}^{A} \cap \mathcal{S}_{b}^{\mathcal{Q}}$, hence $C\in [d,A]$ and $\emptyset \neq [b,C] \subseteq \mathcal{Q}$. Now, by \textbf{A.3*} there exists some $B\in [m,A]$ such that $\emptyset \neq \langle d,B \rangle_{m}^{A} \subseteq \langle d,C \rangle_{m}^{A}$. Finally, notice that $\langle d,B \rangle_{m}^{A} \subseteq \mathcal{S}_{b}^{\mathcal{Q}}$, since every  $D\in \langle d,B \rangle_{m}^{A} \subseteq \langle d,C \rangle_{m}^{A} \subseteq [d,C]$  satisfies that $\emptyset \neq [b,D] \subseteq [b,C] \subseteq \mathcal{Q}$, therefore $D\in \mathcal{S}_{b}^{\mathcal{Q}}$.
\end{proof}

\smallskip

\begin{lemma} \label{Abstract-Matet-Lemma}
	Let $(\mathcal{R},\leq,r)$ be a strong axiomatized topological Ramsey space, and let $[a,A]\neq \emptyset$ be a basic open set of $\mathcal{R}$. If $\{\mathcal{Q}_{i}\}_{i\in \mathbb{N}} \subseteq \wp(\mathcal{R})$ is any sequence of subsets of $\mathcal{R}$ for which $[a,B] \not\subseteq \bigcap_{i\in\mathbb{N}} \mathcal{Q}_{i}$ for all $B\in [a,A]$, then there are $i\in \mathbb{N}$ and $[c,C]\neq \emptyset$, with $C\in[a,A]$, $[c,C]\subseteq [a,A]$, and $|a|\leq |c| \leq |a|+i$, such that $[d,D] \not\subseteq \mathcal{Q}_{i}$ for each $[d,D]\neq \emptyset$ with $[d,D] \subseteq [c,C]$.
\end{lemma}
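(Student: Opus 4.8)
The plan is to argue by contradiction. Suppose the conclusion fails; then for every $i\in\mathbb N$ and every nonempty basic open set $[c,C]\subseteq[a,A]$ with $C\in[a,A]$ and $|a|\le|c|\le|a|+i$, there is a nonempty $[d,D]\subseteq[c,C]$ with $[d,D]\subseteq\mathcal Q_i$; that is, each $\mathcal Q_i$ is ``dense'' below $[a,A]$ at all levels up to $|a|+i$. From this density I would manufacture a single $B\in[a,A]$ with $[a,B]\subseteq\bigcap_{i}\mathcal Q_i$, directly contradicting the hypothesis that $[a,B]\not\subseteq\bigcap_i\mathcal Q_i$ for every $B\in[a,A]$.

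The first step is a reduction. For fixed $i$, I claim that $[a,B]\subseteq\mathcal Q_i$ holds exactly when $B\in\mathcal S_{d}^{\mathcal Q_i}$ for every $d\in r_{|a|+i}\text{''}\,[a,B]$: indeed any $X\in[a,B]$ lies in $[d,B]$ for $d=r_{|a|+i}(X)$, so if each such $[d,B]\subseteq\mathcal Q_i$ then $X\in\mathcal Q_i$; conversely $[d,B]\subseteq[a,B]$ whenever $d\sqsupseteq a$. I would also record that each $\mathcal S_d^{\mathcal Q}$ is downward closed (if $[d,D]\subseteq\mathcal Q$ and $D'\le D$ with $[d,D']\ne\emptyset$ then $[d,D']\subseteq[d,D]\subseteq\mathcal Q$), which lets a single refinement witness membership for all smaller objects. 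Thus it suffices to build $B$ realizing $B\in\mathcal S_d^{\mathcal Q_i}$ for all relevant pairs $(d,i)$.

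The second step is a fusion. I would construct a fusion sequence $\{[m_k,Y_k]\}_{k\in\mathbb N}$ with $Y_0\in[a,A]$, handling at stage $k$ the finitely many pairs $(d,i)$ with $i\le k$, $d\in r_{|a|+i}\text{''}\,[a,Y_k]$, and $\mathtt{depth}_{Y_k}(d)\le m_k$ (finiteness comes from \textbf{A.2}). For each such pair I would invoke Lemma \ref{Small-Lemma} to refine $Y_k$ to some $Y'\in[m,Y_k]$ deciding the dichotomy for $\mathcal S_d^{\mathcal Q_i}$ on the localized open set $\langle d,Y'\rangle_m^A$, using the strong amalgamation axiom \textbf{A.3*}.2 to preserve the decisions made at earlier stages. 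The point of the length bound $|d|\le|a|+i$ is that the density supplied by the negated conclusion is exactly available at this level: if the dichotomy returned the alternative $\langle d,Y'\rangle_m^A\cap\mathcal S_d^{\mathcal Q_i}=\emptyset$, density below $[d,Y']$ would produce some $[e,E]\subseteq\mathcal Q_i$; in the favorable case $e=d$ this already contradicts the alternative (choosing, via the no-isolated-points property guaranteed by \textbf{A.3*}.1, an $X\in[d,E]$ whose next approximation has depth $>m$, so $X\in\langle d,Y'\rangle_m^A\cap\mathcal S_d^{\mathcal Q_i}$). Hence every decision falls on the side $\langle d,Y'\rangle_m^A\subseteq\mathcal S_d^{\mathcal Q_i}$, and the downward closure of $\mathcal S_d^{\mathcal Q_i}$ transfers this to the limit $B=\lim_{m_k}Y_k$, yielding $B\in\mathcal S_d^{\mathcal Q_i}$ for every relevant pair and therefore $[a,B]\subseteq\bigcap_i\mathcal Q_i$.

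The hard part will be excluding the empty-intersection alternative in full generality, since density only guarantees some $[e,E]\subseteq\mathcal Q_i$ whose base $e$ may \emph{properly} extend $d$, and the gap between ``$\mathcal Q_i$ is dense below $[d,Y']$'' and ``$\mathcal Q_i$ contains an entire $[d,\cdot]$'' is genuinely the combinatorial heart of the statement. I expect this to be resolved by an induction on $i$ (equivalently on the allowed length $|a|+i$): treating the shorter approximations first should reduce the deeper witnesses to already-handled shallower ones, so that the favorable case $e=d$ is the only one that survives. Making this induction mesh with the stage-by-stage fusion bookkeeping—so that all pairs $(d,i)$ are eventually decided and no earlier requirement is injured—will be the most delicate part of the argument.
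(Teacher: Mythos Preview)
Your contradiction strategy has a real gap at exactly the point you flag as ``the hard part,'' and the proposed induction on $i$ does not close it. The negated conclusion gives you, for each $d$ with $|d|\le |a|+i$, only a nonempty $[e,E]\subseteq[d,Y']$ with $[e,E]\subseteq\mathcal Q_i$; the stem $e$ may be arbitrarily longer than $d$, so this witnesses $E\in\mathcal S_e^{\mathcal Q_i}$, not $E\in\mathcal S_d^{\mathcal Q_i}$, and gives no contradiction with $\langle d,Y'\rangle_m^A\cap\mathcal S_d^{\mathcal Q_i}=\emptyset$. An induction on $i$ cannot help: the problematic $e$ is \emph{longer} than $d$, not shorter, so ``already-handled shallower ones'' are irrelevant. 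Even the base case $i=0$ fails---density of $\mathcal Q_0$ below $[a,C]$ does not yield any $[a,C']\subseteq\mathcal Q_0$. In effect you are trying to prove that ``$\mathcal Q_i^{\complement}$ nowhere dense below $[a,A]$'' implies ``$\mathcal Q_i^{\complement}$ Ramsey null below $[a,A]$,'' which is precisely the content this lemma is meant to establish; the argument is circular.

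The paper's proof is structurally different and supplies the missing ingredient. It is \emph{direct}, not by contradiction, and proceeds in two fusions. The first fusion uses Lemma~\ref{Small-Lemma} exactly as you do, but it merely \emph{records} the dichotomy outcomes for all pairs $(b,d)$ without trying to force them one way. Then the hypothesis $[a,B]\not\subseteq\bigcap_i\mathcal Q_i$ is invoked to produce a concrete witness $E\in[a,B]\setminus\mathcal Q_i$ and the stem $c$. The crucial step you are missing is a \emph{second} fusion below $[c,E]$ that applies the abstract pigeonhole principle \textbf{A.4} to the one-step extension sets $T_b=\{q\in r_{|b|+1}\text{''}[b,E]:[q,E]\subseteq\mathcal Q_i\}$, thinning so that $T_b\cap r_{|b|+1}\text{''}[b,M_{k+1}]=\emptyset$. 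The decided dichotomies from the first fusion (via the sets $\mathcal K_b$) are what guarantee that \textbf{A.4} cannot land on the ``contained'' alternative here. Your proposal never invokes \textbf{A.4}, and without it the passage from density to containment cannot be made.
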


\begin{proof}
	Let $[a,A] \neq \emptyset$ and $\{\mathcal{Q}_{i}\}_{i\in \mathbb{N}} \subseteq \wp(\mathcal{R})$ be such that $[a,B] \not\subseteq \bigcap_{i\in\mathbb{N}} \mathcal{Q}_{i}$ for all $B\in [a,A]$. We may assume without loss of generality that $a \sqsubset A$.

    \medskip
		
	First, we recursively construct a fusion sequence $\{[n_{k},A_{k}]\}_{k\in\mathbb{N}}$ such that for all $k\in\mathbb{N}$ we have:
	\setlist{nolistsep}
	\begin{enumerate}
	\setlength{\itemsep}{0pt}
		\item[\it (i)] $A_{0}=A$.
		\item[\it (ii)] $n_{k} = \mathtt{depth}_{A}(a)+k = |a|+k$.
		\item[\it (iii)] For all $i\leq k$ and for each $b,d \in \mathcal{AR} \!\restriction\! [a,A_{k}]$ with $\mathtt{depth}_{A_{k}}(b) \leq \mathtt{depth}_{A_{k}}(d) \leq n_{k}$ and $b \leq_{\text{fin}} d$, the object $A_{k+1} \in [n_{k},A_{k}]$ is such that either $\langle d,A_{k+1} \rangle_{n_{k}}^{A_{k}} \subseteq \mathcal{S}_{b}^{\mathcal{Q}_{i}}$ or $\langle d,A_{k+1} \rangle_{n_{k}}^{A_{k}} \cap \mathcal{S}_{b}^{\mathcal{Q}_{i}} = \emptyset$. 
	\end{enumerate}

    \medskip
    
    Suppose inductively that the basic open set $[n_{k},A_{k}]$ has been constructed. Applying \textbf{A.2}, we consider the finite set $\{ (b,d) \in \mathcal{AR}\!\restriction\![a,A_{k}] \times \mathcal{AR}\!\restriction\![a,A_{k}] \,|\, \mathtt{depth}_{A_{k}}(b) \leq \mathtt{depth}_{A_{k}}(d) \leq n_{k} \wedge b \leq_{\text{fin}} d \}$, for which we fix an arbitrary enumeration $\{ (b_{0},d_{0}), \ldots, (b_{p},d_{p}) \}$ with $p\in\mathbb{N}$. Now, we construct a finite sequence $X_{0} \geq \cdots \geq X_{p+1}$ of elements of $[n_{k},A_{k}]$ such that for all $j\leq p$ we have: 
	\setlist{nolistsep}
	\begin{enumerate}
		\setlength{\itemsep}{0pt}
		\item[\it (iv)] $X_{0}=A_{k}$.
		\item[\it (v)] $X_{j+1} \in [n_{k},X_{j}]$.
		\item[\it (vi)] For all $i\leq k$, either $\langle d_{j},X_{j+1} \rangle_{n_{k}}^{A_{k}} \subseteq \mathcal{S}_{b_{j}}^{\mathcal{Q}_{i}}$ or $\langle d_{j},X_{j+1} \rangle_{n_{k}}^{A_{k}} \cap \mathcal{S}_{b_{j}}^{\mathcal{Q}_{i}} = \emptyset$. 
	\end{enumerate}
	
    \medskip
        
    Suppose that the object $X_{j}\in [n_{k},A_{k}]$ has been defined, thus $r_{n_{k}}(X_{j}) = r_{n_{k}}(A_{k})$, which implies that $\mathtt{depth}_{X_{j}}(b_{j}) = \mathtt{depth}_{A_{k}}(b_{j})$ and $\mathtt{depth}_{X_{j}}(d_{j}) = \mathtt{depth}_{A_{k}}(d_{j})$. Then, applying iteratively lemma \ref{Small-Lemma}, we construct a finite sequence $X_{j}^{0} \geq \cdots \geq X_{j}^{k+1}$ of elements of  $[n_{k},X_{j}] \subseteq [n_{k},A_{k}]$ such that for all $i\leq k$ we have:  
		\setlist{nolistsep}
		\begin{enumerate}
			\setlength{\itemsep}{0pt}
			\item[\it (vii)] $X_{j}^{0}=X_{j}$.
			\item[\it (viii)] $X_{j}^{i+1} \in [n_{k},X_{j}^{i}]$.
			\item[\it (ix)] Either $\langle d_{j},X_{j}^{i+1} \rangle_{n_{k}}^{A_{k}} \subseteq \mathcal{S}_{b_{j}}^{\mathcal{Q}_{i}}$ or $\langle d_{j},X_{j}^{i+1} \rangle_{n_{k}}^{A_{k}} \cap \mathcal{S}_{b_{j}}^{\mathcal{Q}_{i}} = \emptyset$.  
		\end{enumerate}
			
    \medskip
    
    Suppose that the object $X_{j}^{i} \in [n_{k},X_{j}]$ has been defined, in which case $\mathtt{depth}_{X_{j}^{i}}(b_{j}) \leq \mathtt{depth}_{X_{j}^{i}}(d_{j}) \leq n_{k}$; so, by lemma \ref{Small-Lemma}, we deduce that there exists $X_{j}^{i+1} \in [n_{k}, X_{j}^{i}] \subseteq [n_{k},X_{j}] \subseteq [n_{k},A_{k}]$ such that either $\langle d_{j},X_{j}^{i+1} \rangle_{n_{k}}^{X_{j}^{i}} \subseteq \mathcal{S}_{b_{j}}^{\mathcal{Q}_{i}}$ or $\langle d_{j},X_{j}^{i+1} \rangle_{n_{k}}^{X_{j}^{i}} \cap \mathcal{S}_{b_{j}}^{\mathcal{Q}_{i}} = \emptyset$. Since $r_{n_{k}}(X_{j}^{i}) = r_{n_{k}}(X_{j}) = r_{n_{k}}(A_{k})$,  it follows that $\langle d_{j},X_{j}^{i+1} \rangle_{n_{k}}^{X_{j}^{i}} = \langle d_{j},X_{j}^{i+1} \rangle_{n_{k}}^{A_{k}}$; thus, either $\langle d_{j},X_{j}^{i+1} \rangle_{n_{k}}^{A_{k}} \subseteq \mathcal{S}_{b_{j}}^{\mathcal{Q}_{i}}$ or $\langle d_{j},X_{j}^{i+1} \rangle_{n_{k}}^{A_{k}} \cap \mathcal{S}_{b_{j}}^{\mathcal{Q}_{i}} = \emptyset$.

    \medskip

		Let $X_{j+1} \in [n_{k},X_{j}]$ be the object $X_{j+1}=X_{j}^{k+1}$, and based on this, we now define the object $A_{k+1} \in [n_{k},A_{k}]$ by $A_{k+1}=X_{p+1}$. Therefore, for each $i\leq k$ and $j\leq p$ we have that either $\langle d_{j},A_{k+1} \rangle_{n_{k}}^{A_{k}} \subseteq \mathcal{S}_{b_{j}}^{\mathcal{Q}_{i}}$ or $\langle d_{j},A_{k+1} \rangle_{n_{k}}^{A_{k}} \cap \mathcal{S}_{b_{j}}^{\mathcal{Q}_{i}} = \emptyset$, completing the induction on $k\in\mathbb{N}$.

    \medskip
		
		Let $B\in [a,A]$ be the limit  $B= \lim_{n_{k}} A_{k}$  of the fusion sequence $\{[n_{k},A_{k}]\}_{k\in\mathbb{N}}$, so that $B\in [n_{k},A_{k}]$ and $r_{n_{k}}(B) = r_{n_{k}}(A_{k})$ for all $k\in \mathbb{N}$. Since $[a,B] \not\subseteq \bigcap_{i\in\mathbb{N}} \mathcal{Q}_{i}$, then there is some $i\in\mathbb{N}$ such that $[a,B] \not\subseteq \mathcal{Q}_{i}$; so, we choose $E\in [a,B] \setminus \mathcal{Q}_{i}$ and we take $c=r_{\ell}(E)$ with $\ell=\max\{ m\in\mathbb{N} \,|\, \mathtt{depth}_{B}(r_{m}(E)) \leq n_{i} \}$, obtaining $|a|\leq |c| \leq |a|+i$. Thus, $a \sqsubseteq c \sqsubset E$ and $E \in [c,B]$, consequently we have that $[c,B] \not\subseteq \mathcal{Q}_{i}$; in fact, it is also true that $[c,E] \not\subseteq \mathcal{Q}_{i}$.

        \medskip
		
		For each $b\in \mathcal{AR} \!\restriction\! [c,E]$, we proceed to consider the family of approximations $T_{b} \subseteq r_{|b|+1} \text{''}\, [b,E]$ as well as the collection of objects $\mathcal{K}_{b} \subseteq [\mathtt{depth}_{E}(b),E]$ defined as follows:
		\begin{align*}
			T_{b} &= \{ q\in r_{|b|+1} \text{''}\, [b,E]  \;|\; [q,E] \subseteq \mathcal{Q}_{i} \} , \text{ and }
			\\
			\mathcal{K}_{b} &= \{ D\in [\mathtt{depth}_{E}(b),E] \;|\; [b,D] \subseteq \mathcal{Q}_{i} \} = \mathcal{S}_{b}^{\mathcal{Q}_{i}} \cap [\mathtt{depth}_{E}(b),E].
		\end{align*}

        \medskip
        
		\textit{Claim 1}. If $b\in \mathcal{AR} \!\restriction\! [c,E]$ and $D_{1},D_{2} \in [\mathtt{depth}_{E}(b),E]$ be such that $D_{1}\in \mathcal{K}_{b}$ and $D_{2} \leq D_{1}$ then $D_{2}\in \mathcal{K}_{b}$. Indeed, notice that $[b,D_{2}] \subseteq [b,D_{1}] \subseteq \mathcal{Q}_{i}$ whenever $D_{1}\in \mathcal{K}_{b}$ and $D_{2} \leq D_{1}$, so that $D_{2}\in \mathcal{K}_{b}$.

        \medskip
		
		\textit{Claim 2}. For all $b\in \mathcal{AR} \!\restriction\! [c,E]$, either $\mathcal{K}_{b}= [\mathtt{depth}_{E}(b),E]$ or $\mathcal{K}_{b}= \emptyset$. Indeed, let $b\in \mathcal{AR} \!\restriction\! [c,E]$ be fixed and put $d= r_{\mathtt{depth}_{E}(b)}(E)$, hence $b \leq_{\text{fin}} d$ and $[\mathtt{depth}_{E}(b),E] = [d,E]$. Now, suppose that $\mathcal{K}_{b} \neq \emptyset$ and let $D\in[d,E]$ be such that $D\in \mathcal{K}_{b}$, which implies that  $D\in \mathcal{S}_{b}^{\mathcal{Q}_{i}}$. We know that $B = \lim_{n_{k}} A_{k}$, so we can take $k\in\mathbb{N}$ such that $n_{k} = \max \{ \mathtt{depth}_{B}(d),n_{i} \}$. Thus $i\leq k$ and $\mathtt{depth}_{B}(b) \leq \mathtt{depth}_{B}(d) \leq n_{k}$, and since $B\in [n_{k+1},A_{k+1}] \subseteq [n_{k},A_{k}]$, we also have that $\mathtt{depth}_{B}(b) = \mathtt{depth}_{A_{k}}(b)$ and $\mathtt{depth}_{B}(d) = \mathtt{depth}_{A_{k}}(d)$. Therefore, we deduce that either $\langle d,A_{k+1} \rangle_{n_{k}}^{A_{k}} \subseteq \mathcal{S}_{b}^{\mathcal{Q}_{i}}$ or $\langle d,A_{k+1} \rangle_{n_{k}}^{A_{k}} \cap \mathcal{S}_{b}^{\mathcal{Q}_{i}} = \emptyset$, and since $r_{n_{k}}(B) = r_{n_{k}}(A_{k+1}) = r_{n_{k}}(A_{k})$ and $B \leq A_{k+1} \leq A_{k}$, then we also obtain that either $\langle d,B \rangle_{n_{k}}^{B} \subseteq \mathcal{S}_{b}^{\mathcal{Q}_{i}}$ or $\langle d,B \rangle_{n_{k}}^{B} \cap \mathcal{S}_{b}^{\mathcal{Q}_{i}} = \emptyset$. However, notice that $D\in \langle d,B \rangle_{n_{k}}^{B} \cap \mathcal{S}_{b}^{\mathcal{Q}_{i}}$, so it must necessarily be true that $\langle d,B \rangle_{n_{k}}^{B} \subseteq \mathcal{S}_{b}^{\mathcal{Q}_{i}}$, hence we infer that $E\in \mathcal{S}_{b}^{\mathcal{Q}_{i}}$ because $E\in \langle d,B \rangle_{n_{k}}^{B}$. We then conclude that $E\in \mathcal{K}_{b}$, and consequently $\mathcal{K}_{b} = [\mathtt{depth}_{E}(b),E]$. 

        \medskip
		
		\textit{Claim 3}. For all $b\in \mathcal{AR} \!\restriction\! [c,E]$, it is true that $\mathcal{K}_{b} \neq \emptyset$ if and only if $T_{b} = r_{|b|+1} \text{''}\, [b,E]$. Indeed, if $\mathcal{K}_{b} \neq \emptyset$ then $\mathcal{K}_{b}= [\mathtt{depth}_{E}(b),E]$, in which case $E\in \mathcal{K}_{b}$ and hence $[b,E] \subseteq \mathcal{Q}_{i}$, thus $[q,E] \subseteq [b,E]\subseteq \mathcal{Q}_{i}$ for each $q\in r_{|b|+1} \text{''}\, [b,E]$, as a result we obtain that $T_{b} = r_{|b|+1} \text{''}\, [b,E]$. Conversely, if $T_{b} = r_{|b|+1} \text{''}\, [b,E]$, then $[b,E] \subseteq \bigcup_{q\in T_{b}} [q,E] \subseteq \mathcal{Q}_{i}$, therefore $E\in \mathcal{K}_{b}$ and in particular $\mathcal{K}_{b} \neq \emptyset$.

        \medskip
		
		\textit{Claim 4}. For every $b\in \mathcal{AR} \!\restriction\! [c,E]$, if $\mathcal{K}_{b} \neq \emptyset$ then $b \neq c$ and $b\in T_{b^{-}}$, where $c\sqsubseteq b^{-} \sqsubset b$ with $|b^{-}|=|b|-1$. Indeed, since $[c,E] \not\subseteq \mathcal{Q}_{i}$ then $E\notin \mathcal{K}_{c}$, in which case $\mathcal{K}_{c} \neq [\mathtt{depth}_{E}(c),E] =[c,E]$ and hence $\mathcal{K}_{c} = \emptyset$. Consequently, if $\mathcal{K}_{b} \neq \emptyset$ then $b\neq c$ and also $\mathcal{K}_{b} = [\mathtt{depth}_{E}(b),E]$, so that $E\in \mathcal{K}_{b}$ and hence $[b,E] \subseteq \mathcal{Q}_{i}$, therefore $b\in T_{b^{-}}$ where $c\sqsubseteq b^{-} \sqsubset b$ with $|b^{-}|=|b|-1$. 

        \medskip
		
		After that stage, we recursively construct a new fusion sequence $\{[m_{k},M_{k}]\}_{k\in\mathbb{N}}$ such that for all $k\in\mathbb{N}$ we have:
	\setlist{nolistsep}
		\begin{enumerate}
			\setlength{\itemsep}{0pt}
			\item[\it (i)] $M_{0}=E$.
			\item[\it (ii)] $m_{k} = \mathtt{depth}_{E}(c)+k = |c|+k$.
			\item[\it (iii)] For all $b\in\mathcal{AR} \!\restriction\! [c,E]$ with $\mathtt{depth}_{M_{k}}(b) = m_{k}$, the object $M_{k+1} \in [m_{k},M_{k}]$ is such that $T_{b} \cap r_{|b|+1} \text{''}\, [b,M_{k+1}] = \emptyset$.
		\end{enumerate}

        \medskip
        
		By induction, suppose that the basic open set $[m_{k},M_{k}]$ has been constructed. Applying \textbf{A.2}, we consider any enumeration of the finite set $\{ b\in\mathcal{AR} \!\restriction\! [c,E] \,|\, \mathtt{depth}_{M_{k}}(b) = m_{k} \} = \{ b_{0},\ldots,b_{p} \}$ with $p\in\mathbb{N}$. Now, we construct a finite sequence $Z_{0} \geq \cdots \geq Z_{p+1}$ of elements of $[m_{k},M_{k}]$ such that for each $j\leq p$ we have:
		\setlist{nolistsep}
		\begin{enumerate}
			\setlength{\itemsep}{0pt}
			\item[\it (iv)] $Z_{0}=M_{k}$.
			\item[\it (v)] $Z_{j+1} \in [m_{k},Z_{j}]$.
			\item[\it (vi)] $T_{b_{j}} \cap r_{|b_{j}|+1} \text{''}\, [b_{j},Z_{j+1}] = \emptyset$.  
		\end{enumerate}

        \medskip
        
        Suppose that the object $Z_{j} \in [m_{k},M_{j}]$ has been defined, and notice that $\mathtt{depth}_{Z_{j}}(b_{j}) = \mathtt{depth}_{M_{k}}(b_{j}) = m_{k}$, then applying the abstract pigeonhole principle \textbf{A.4} to the family $T_{b_{j}}$, we deduce that there exists some $Z_{j+1} \in [m_{k},Z_{j}]$ such that either $r_{|b_{j}|+1} \text{''}\, [b_{j},Z_{j+1}] \subseteq T_{b_{j}}$ or $T_{b_{j}} \cap r_{|b_{j}|+1} \text{''}\, [b_{j},Z_{j+1}] = \emptyset$. However, the first alternative is impossible, since $r_{|b_{j}|+1} \text{''}\, [b_{j},Z_{j+1}] \subseteq T_{b_{j}}$ implies that $[b_{j},Z_{j+1}] \subseteq \mathcal{Q}_{i}$, hence $\mathcal{K}_{b_{j}} \neq \emptyset$ because $Z_{j+1} \in \mathcal{K}_{b_{j}}$, so that $b_{j} \neq c$ and $b_{j} \in T_{b_{j}^{-}}$; in fact, notice that $b_{j} \in T_{b_{j}^{-}} \cap r_{|b_{j}|} \text{''}\, [b_{j}^{-},Z_{j}]$, which is contradictory. Therefore, we must have $T_{b_{j}} \cap r_{|b_{j}|+1} \text{''}\, [b_{j},Z_{j+1}] = \emptyset$.

        \medskip
		
		We now define the object $M_{k+1} \in [m_{k},M_{j}]$ by $M_{k+1}=Z_{p+1}$; thus, for all $j \leq p$ we have that $T_{b_{j}} \cap r_{|b_{j}|+1} \text{''}\, [b_{j},M_{k+1}] = \emptyset$. In this way, we have completed the induction on $k\in\mathbb{N}$.

        \medskip
		
		Finally, let $C\in [c,E] \subseteq [c,B] \subseteq [a,A]$ be the limit $C =\lim_{m_{k}} M_{k}$ of the fusion sequence $\{[m_{k},M_{k}]\}_{k\in\mathbb{N}}$, so that $C\in [m_{k},M_{k}]$ and $r_{m_{k}}(C) = r_{m_{k}}(M_{k})$ for all $k\in \mathbb{N}$. Now, if $[d,D] \neq \emptyset$ is any basic open set of $\mathcal{R}$ such that $[d,D] \subseteq [c,C] \subseteq [c,E]$, then we have that $d\in \mathcal{AR} \!\restriction\! [c,C] \subseteq \mathcal{AR} \!\restriction\! [c,E]$. Furthermore, we can assume that $D\leq E$, in which case there is some $D^{\prime} \in [\mathtt{depth}_{E}(d),E]$ such that $\emptyset \neq [d,D^{\prime}] \subseteq [d,D]$, in view of \textbf{A.3}. In closing, let $k\in \mathbb{N}$ be such that $m_{k} = \mathtt{depth}_{C}(d) = \mathtt{depth}_{M_{k}}(d)$, thus we infer that $T_{d} \cap r_{|d|+1} \text{''}\, [d,M_{k+1}] = \emptyset$, and since $M_{k+1} \leq E$, then we deduce that $T_{d} \neq r_{|d|+1} \text{''}\, [d,E]$, for this reason  $\mathcal{K}_{d} = \emptyset$ and hence $D^{\prime}\notin \mathcal{K}_{d}$, so that $[d,D^{\prime}] \not\subseteq \mathcal{Q}_{i}$ because $D^{\prime}\notin \mathcal{S}_{d}^{\mathcal{Q}_{i}}$. Therefore, we ultimately conclude that $[d,D] \not\subseteq \mathcal{Q}_{i}$ whenever $\emptyset \neq [d,D] \subseteq [c,C]$. 	  		
	\end{proof}

    \smallskip

Applying the previous lemma, we finally conclude with our alternative proof of the abstract Ellentuck theorem for the class of strong axiomatized topological Ramsey spaces. With reference to the statement of theorem \ref{Abstract-Ellentuck-Theorem}, its first item will be proved in proposition \ref{Abstract_Ellentuck_1}, its second item will be shown in proposition \ref{Abstract_Ellentuck_2}, and its third item will be verified in corollary \ref{Abstract_Ellentuck_3}.

\smallskip

\begin{proposition} \label{Abstract_Ellentuck_1}
	Let $(\mathcal{R},\leq,r)$ be a strong axiomatized topological Ramsey space. Then, for any $\mathcal{X} \subseteq \mathcal{R}$, the following statements are equivalent:
	\setlist{nolistsep}
		\begin{enumerate}
			\setlength{\itemsep}{0pt}
			\item[(a)] $\mathcal{X}$ is Ramsey null.
			\item[(b)] $\mathcal{X}$ is nowhere dense.
			\item[(c)] $\mathcal{X}$ is meager.
		\end{enumerate}
\end{proposition}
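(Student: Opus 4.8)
The plan is to establish the cycle of implications (a) $\Rightarrow$ (b) $\Rightarrow$ (c) $\Rightarrow$ (a), with the last implication carrying essentially all the weight, since Lemma \ref{Abstract-Matet-Lemma} has already absorbed the combinatorial difficulty. The first two implications are purely formal consequences of the definitions.

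First I would handle (a) $\Rightarrow$ (b) directly. In the Ellentuck topology the basic open sets are precisely the sets $[a,A]$, so if $\mathcal{X}$ is Ramsey null then for each nonempty basic open $[a,A]$ there is some $B\in[a,A]$ with $[a,B]\cap\mathcal{X}=\emptyset$; since $B\leq A$ and $r_{|a|}(B)=a$ give $[a,B]\subseteq[a,A]$, and $[a,B]$ is itself basic open, this is exactly the statement that $\mathcal{X}$ is nowhere dense. The implication (b) $\Rightarrow$ (c) is immediate, as any nowhere dense set is meager (it is a union of a single nowhere dense set).

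The substantive implication is (c) $\Rightarrow$ (a). I would write $\mathcal{X}=\bigcup_{i\in\mathbb{N}}\mathcal{N}_{i}$ with each $\mathcal{N}_{i}$ nowhere dense, and set $\mathcal{Q}_{i}=\mathcal{N}_{i}^{\complement}$, so that $\bigcap_{i\in\mathbb{N}}\mathcal{Q}_{i}=\mathcal{X}^{\complement}$. Arguing by contradiction, suppose $\mathcal{X}$ fails to be Ramsey null; then there is a nonempty basic open set $[a,A]$ such that for every $B\in[a,A]$ we have $[a,B]\cap\mathcal{X}\neq\emptyset$, equivalently $[a,B]\not\subseteq\bigcap_{i\in\mathbb{N}}\mathcal{Q}_{i}$. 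This is precisely the hypothesis of Lemma \ref{Abstract-Matet-Lemma} for the sequence $\{\mathcal{Q}_{i}\}_{i\in\mathbb{N}}$, which then produces an index $i\in\mathbb{N}$ and a nonempty basic open set $[c,C]\subseteq[a,A]$ such that $[d,D]\not\subseteq\mathcal{Q}_{i}$ for every nonempty $[d,D]\subseteq[c,C]$.

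To close the argument I would invoke the nowhere density of $\mathcal{N}_{i}$: applied to the basic open set $[c,C]$, it yields a nonempty basic open $[d,D]\subseteq[c,C]$ with $[d,D]\cap\mathcal{N}_{i}=\emptyset$, that is, $[d,D]\subseteq\mathcal{N}_{i}^{\complement}=\mathcal{Q}_{i}$, directly contradicting the conclusion of the lemma. Hence no such $[a,A]$ exists and $\mathcal{X}$ is Ramsey null. The only delicate point is correctly matching the quantifier structure of Lemma \ref{Abstract-Matet-Lemma} to the negation of the Ramsey null property; once the translation $\mathcal{Q}_{i}=\mathcal{N}_{i}^{\complement}$ is fixed, the contradiction with nowhere density is the expected punchline, so I do not anticipate a genuine obstacle at the level of the proposition — the real work lives in the lemma.
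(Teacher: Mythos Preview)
Your proof is correct and follows the paper's argument essentially verbatim: the implications (a) $\Rightarrow$ (b) $\Rightarrow$ (c) are dismissed as immediate, and for (c) $\Rightarrow$ (a) both you and the paper argue by contradiction, pass to complements $\mathcal{Q}_i=\mathcal{N}_i^{\complement}$, feed the hypothesis into Lemma~\ref{Abstract-Matet-Lemma}, and read off a contradiction with the nowhere density of $\mathcal{N}_i$. The only cosmetic difference is that the paper phrases the final step as ``$\mathcal{Q}_i$ is not nowhere dense'' while you explicitly produce the witnessing $[d,D]$; this is the same contradiction unpacked.
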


	\begin{proof}
		Notice that it is immediate that every Ramsey null set is also a nowhere dense set and hence is itself a meager set. So, let us verify that every meager set is  Ramsey null. Indeed, if $\mathcal{X} \subseteq \mathcal{R}$ is a meager set, then there is a sequence $\{ \mathcal{Q}_{i} \}_{i\in \mathbb{N}} \subseteq \wp(\mathcal{R})$ of nowhere dense sets such that $\mathcal{X} = \bigcup_{i\in \mathbb{N}} \mathcal{Q}_{i}$; thus, by considering the sequence $\{\mathcal{Q}_{i}^{\complement} \}_{i\in \mathbb{N}}$, we have that $\bigcap_{i\in\mathbb{N}} \mathcal{Q}_{i}^{\complement} = ( \bigcup_{i\in\mathbb{N}} \mathcal{Q}_{i} )^{\complement} = \mathcal{X}^{\complement}$. Now, by reduction to the absurd, suppose that $\mathcal{X}$ is not a Ramsey null set, then there is $[a,A]\neq\emptyset$ such that for every $B\in[a,A]$ it is true that $[a,B] \cap \mathcal{X} \neq\emptyset$, so $[a,B] \not\subseteq \mathcal{X}^{\complement}$ and hence $[a,B] \not\subseteq \bigcap_{i\in\mathbb{N}} \mathcal{Q}_{i}^{\complement}$. Therefore, by virtue of lemma \ref{Abstract-Matet-Lemma}, we deduce that there are $i\in \mathbb{N}$ and $[c,C]\neq \emptyset$, with $C\in[a,A]$ and $[c,C]\subseteq [a,A]$, such that for each $[d,D]\neq \emptyset$, if $[d,D] \subseteq [c,C]$ then $[d,D] \not\subseteq \mathcal{Q}_{i}^{\complement}$ and hence $[d,D] \cap \mathcal{Q}_{i} \neq \emptyset$; as a result, we conclude that $\mathcal{Q}_{i}$ is not a nowhere dense set, which is contradictory. 	
	\end{proof}

\smallskip

\begin{proposition} \label{Abstract_Ellentuck_2}
	Let $(\mathcal{R},\leq,r)$ be a strong axiomatized topological Ramsey space. Then, for any $\mathcal{X} \subseteq \mathcal{R}$, the following statements are equivalent:
	\setlist{nolistsep}
	\begin{enumerate}
		\setlength{\itemsep}{0pt}
		\item[(a)] $\mathcal{X}$ is Ramsey.
		\item[(b)] $\mathcal{X}$ has the abstract Baire property.
		\item[(c)] $\mathcal{X}$ has the Baire property.
	\end{enumerate}
\end{proposition}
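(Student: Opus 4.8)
The plan is to establish the cycle of implications $(a)\Rightarrow(b)\Rightarrow(c)\Rightarrow(a)$, relying on proposition \ref{Abstract_Ellentuck_1} to pass freely between \emph{meager}, \emph{nowhere dense} and \emph{Ramsey null}, and on lemma \ref{Abstract-Matet-Lemma} to carry the only genuinely combinatorial step. The easy directions come first. For $(a)\Rightarrow(b)$ I argue straight from the definitions: given a nonempty basic open set $[a,A]$, the Ramsey property yields $B\in[a,A]$ with $[a,B]\subseteq\mathcal{X}$ or $[a,B]\cap\mathcal{X}=\emptyset$, and since $[a,B]$ is itself a basic open subset of $[a,A]$, this is exactly the abstract Baire property.

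For $(b)\Rightarrow(c)$ I take $\mathcal{O}$ to be the topological interior of $\mathcal{X}$, namely the union of all basic open sets contained in $\mathcal{X}$; then $\mathcal{O}\subseteq\mathcal{X}$, so that $\mathcal{X}\triangle\mathcal{O}=\mathcal{X}\setminus\mathcal{O}$. Using the abstract Baire property, every basic open set $[a,A]$ contains a basic open subset $[c,C]$ that is either inside $\mathcal{X}$ (hence inside $\mathcal{O}$) or disjoint from $\mathcal{X}$; in both cases $[c,C]$ is disjoint from $\mathcal{X}\setminus\mathcal{O}$, so $\mathcal{X}\setminus\mathcal{O}$ is nowhere dense, hence meager by proposition \ref{Abstract_Ellentuck_1}. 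Thus $\mathcal{X}$ has the Baire property.

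The substance lies in $(c)\Rightarrow(a)$, which I reduce to the auxiliary claim that \emph{every open set is Ramsey}. Granting this, suppose $\mathcal{X}$ has the Baire property and write $\mathcal{X}=\mathcal{O}\triangle\mathcal{M}$ with $\mathcal{O}$ open and $\mathcal{M}$ meager; by proposition \ref{Abstract_Ellentuck_1} the set $\mathcal{M}$ is Ramsey null. Given $[a,A]\neq\emptyset$, I first choose $A_{1}\in[a,A]$ with $[a,A_{1}]\cap\mathcal{M}=\emptyset$, so that $\mathcal{X}$ and $\mathcal{O}$ coincide on $[a,A_{1}]$; then I apply the claim to the open set $\mathcal{O}$ inside $[a,A_{1}]$ to obtain $B\in[a,A_{1}]$ deciding $\mathcal{O}$, which decides $\mathcal{X}$ on $[a,B]\subseteq[a,A_{1}]$ as well.

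It remains to prove that an open set $\mathcal{O}$ is Ramsey, and this is where I expect the only real difficulty, namely \emph{keeping the stem fixed}. Fix $[a,A]\neq\emptyset$. If some $B\in[a,A]$ satisfies $[a,B]\subseteq\mathcal{O}$ we are done; otherwise $[a,B]\not\subseteq\mathcal{O}$ for every $B\in[a,A]$. The naive application of lemma \ref{Abstract-Matet-Lemma} with the constant sequence $\mathcal{Q}_{i}=\mathcal{O}$ only produces a basic open set $[c,C]$ with a possibly \emph{longer} stem $c$ on which $\mathcal{O}$ is avoided, which is not what definition \ref{RamseySet} demands. To force $c=a$ I instead feed lemma \ref{Abstract-Matet-Lemma} the padded sequence $\mathcal{Q}_{0}=\mathcal{O}$ and $\mathcal{Q}_{i}=\mathcal{R}$ for all $i\geq 1$; then $\bigcap_{i}\mathcal{Q}_{i}=\mathcal{O}$, so the hypothesis of the lemma is exactly our assumption, while the index $i$ it returns cannot be positive, since no nonempty basic open subset of $[c,C]$ can fail to be contained in $\mathcal{R}=\mathcal{Q}_{i}$. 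Hence $i=0$, and the length constraint $|a|\leq|c|\leq|a|+i$ collapses to $|c|=|a|$, whence $c=a$ because $a\sqsubseteq c$. This yields $C\in[a,A]$ such that no nonempty basic open subset of $[a,C]$ is contained in $\mathcal{O}$; since $\mathcal{O}$ is open, this forces $[a,C]\cap\mathcal{O}=\emptyset$, giving the second alternative with the correct stem. This padding trick is the crux of the whole argument, and once it is in place the three equivalences follow by chaining the implications above.
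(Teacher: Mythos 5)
Your proof is correct, and it isolates exactly the same crux as the paper: the padded sequence $\mathcal{Q}_{0}=\mathcal{X}$ (or $\mathcal{O}$), $\mathcal{Q}_{i}=\mathcal{R}$ for $i\geq 1$, fed into lemma \ref{Abstract-Matet-Lemma} to force the returned index to be $0$ and hence pin the stem at $a$ --- this is precisely the trick the paper uses. Where you diverge is in the organization of the cycle. The paper proves $(c)\Rightarrow(b)$ by intersecting a basic open set avoiding the meager symmetric difference $\mathcal{X}\triangle\mathcal{O}$ with a basic open set deciding $\mathcal{O}$, and then proves $(b)\Rightarrow(a)$ by contrapositive, applying lemma \ref{Abstract-Matet-Lemma} \emph{twice}: once with the padded sequence for $\mathcal{X}$ to get a set $[a,C]$ none of whose basic open subsets lie in $\mathcal{X}$, and once with the constant sequence $\mathcal{Q}_{i}^{*}=\mathcal{X}^{\complement}$ (where the stem need not be controlled) to produce a single $[e,E]$ witnessing the failure of the abstract Baire property. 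You instead prove $(b)\Rightarrow(c)$ via the interior of $\mathcal{X}$ --- a step the paper dismisses as immediate, and which you handle more carefully --- and route the hard implication $(c)\Rightarrow(a)$ through the auxiliary fact that open sets are Ramsey, transferring to $\mathcal{X}$ by first shrinking into a basic open set disjoint from the Ramsey-null part. Your version needs only one application of the lemma but pays for it with the small topological argument that a basic open set none of whose basic open subsets lie in an open $\mathcal{O}$ must be disjoint from $\mathcal{O}$ (which you state but do not spell out: given $X\in[a,C]\cap\mathcal{O}$ and a basic $[e,Y]$ with $X\in[e,Y]\subseteq\mathcal{O}$, the set $[e,X]$ or $[a,X]$, according to which of $e,a$ is the longer stem, is a nonempty basic open subset of $[a,C]$ inside $\mathcal{O}$). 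Both routes are sound; yours makes the role of open sets explicit, while the paper's is more symmetric in $\mathcal{X}$ and $\mathcal{X}^{\complement}$.
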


	\begin{proof}
		As in the proof of proposition \ref{Abstract_Ellentuck_1}, it is immediate that every Ramsey set has the abstract Baire property and hence also has the Baire property. 

        \medskip
		
		Conversely, if $\mathcal{X} \subseteq \mathcal{R}$ has the Baire property, then $\mathcal{X} \triangle \mathcal{O}$ is meager for some open set $\mathcal{O}$, hence $\mathcal{X} \triangle \mathcal{O}$ is nowhere dense by proposition \ref{Abstract_Ellentuck_1}; thus, for each $[a,A]\neq \emptyset$ there is $\emptyset \neq [b,B] \subseteq [a,A]$ such that $[b,B] \cap (\mathcal{X} \triangle \mathcal{O}) = \emptyset$. Now, since $\mathcal{O}$ has the Baire property, there is $\emptyset \neq [c,C] \subseteq [b,B] \subseteq [a,A]$ such that either $[c,C] \subseteq \mathcal{O}$ or $[c,C] \cap \mathcal{O} = \emptyset$. Also, since $[c,C]\subseteq [b,B]$, we have $[c,C] \cap (\mathcal{X} \triangle \mathcal{O}) = \emptyset$. Therefore, when $[c,C] \subseteq \mathcal{O}$ we deduce that $[c,C] \subseteq \mathcal{X}$, and when $[c,C] \cap \mathcal{O} =\emptyset$ we infer that $[c,C] \cap \mathcal{X} =\emptyset$; as a result, we conclude that $\mathcal{X}$ has the abstract Baire property. 

        \medskip
	
		Finally, let us check that any set with the abstract Baire property is in turn a Ramsey set. Indeed, suppose that $\mathcal{X} \subseteq \mathcal{R}$ is not a Ramsey set, then there exists $[a,A] \neq \emptyset$ such that for every $B\in [a,A]$ it is true that $[a,B] \not\subseteq \mathcal{X}$ and $[a,B] \not\subseteq \mathcal{X}^{\complement}$. Now, we consider the eventually constant sequence $\{\mathcal{Q}_{i}\}_{i\in\mathbb{N}} \subseteq \wp(\mathcal{R})$ defined by $\mathcal{Q}_{0} = \mathcal{X}$ and $\mathcal{Q}_{i} = \mathcal{R}$ for each $i>0$, so that $\bigcap_{i\in\mathbb{N}} \mathcal{Q}_{i} =\mathcal{Q}_{0} =\mathcal{X}$ and hence $[a,B] \not\subseteq \bigcap_{i\in\mathbb{N}} \mathcal{Q}_{i}$ for all $B\in [a,A]$. Then, by applying lemma \ref{Abstract-Matet-Lemma} to the sequence $\{\mathcal{Q}_{i}\}_{i\in\mathbb{N}}$, we necessarily conclude that there exists $C\in[a,A]$ such that $[d,D] \not\subseteq \mathcal{X}$ whenever $\emptyset \neq [d,D] \subseteq [a,C]$. Next, we consider the constant sequence $\{\mathcal{Q}_{i}^{*}\}_{i\in\mathbb{N}} \subseteq \wp(\mathcal{R})$ defined by $\mathcal{Q}_{i}^{*} = \mathcal{X}^{\complement}$ for each $i\in\mathbb{N}$, so that $\bigcap_{i\in\mathbb{N}} \mathcal{Q}_{i}^{*} =\mathcal{Q}_{i}^{*} =\mathcal{X}^{\complement}$ and hence $[a,B] \not\subseteq \bigcap_{i\in\mathbb{N}} \mathcal{Q}_{i}^{*}$ for all $B\in [a,C] \subseteq [a,A]$. Then, by applying lemma \ref{Abstract-Matet-Lemma} to the sequence $\{\mathcal{Q}_{i}^{*}\}_{i\in\mathbb{N}}$, we deduce that there is some $[e,E]\neq \emptyset$, with $E\in[a,C]$ and $[e,E]\subseteq [a,C]$, such that $[d,D] \not\subseteq \mathcal{X}^{\complement}$ whenever $\emptyset \neq [d,D] \subseteq [e,E]$. Therefore, the basic open set $[e,E]\neq \emptyset$ satisfies both $[d,D] \not\subseteq \mathcal{X}$ and $[d,D] \not\subseteq \mathcal{X}^{\complement}$ for every $[d,D]\neq \emptyset$ such that $[d,D]\subseteq [e,E]$; thus, we conclude that $\mathcal{X}$ does not have the abstract Baire property. 
	\end{proof}

    \smallskip

\begin{corollary} \label{Abstract_Ellentuck_3}
	Let $(\mathcal{R},\leq,r)$ be a strong axiomatized topological Ramsey space. If $\mathcal{R} =\bigcup_{i=1}^{k} \mathcal{P}_{i}$ is any finite partition of the space $\mathcal{R}$, where each piece $\mathcal{P}_{i}$ has the Baire property, then there is $H\in \mathcal{R}$ such that $\mathcal{R} \!\restriction\! H \subseteq \mathcal{P}_{i}$ for some $i\in\{1,\ldots,k\}$.
\end{corollary}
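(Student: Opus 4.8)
The plan is to deduce Corollary~\ref{Abstract_Ellentuck_3} directly from Proposition~\ref{Abstract_Ellentuck_2}, treating the finite partition piece by piece. The essential observation is that the conclusion of Proposition~\ref{Abstract_Ellentuck_2} gives us \emph{local} homogeneity on some basic open set $[a,B]$, and the whole point of Corollary~\ref{Abstract_Ellentuck_3} is to upgrade this to \emph{global} homogeneity on a full restriction $\mathcal{R}\!\restriction\! H = [r_0(H),H] = [\emptyset,H]$, using that $r_0(H)=\emptyset$ by Axiom~\textbf{A.1.1}.

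First I would observe that, since each $\mathcal{P}_i$ has the Baire property, Proposition~\ref{Abstract_Ellentuck_2} tells us each $\mathcal{P}_i$ is Ramsey. Starting from the whole space, i.e.\ the basic open set $[\emptyset,A_0]$ where $A_0$ is any fixed element of $\mathcal{R}$, I would apply the Ramsey property of $\mathcal{P}_1$ to obtain $A_1\in[\emptyset,A_0]$ with either $[\emptyset,A_1]\subseteq\mathcal{P}_1$ or $[\emptyset,A_1]\cap\mathcal{P}_1=\emptyset$. In the first case we are already done with $H=A_1$, since $[\emptyset,A_1]=\mathcal{R}\!\restriction\! A_1\subseteq\mathcal{P}_1$. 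Otherwise $[\emptyset,A_1]$ avoids $\mathcal{P}_1$, so inside $[\emptyset,A_1]$ the pieces $\mathcal{P}_2,\ldots,\mathcal{P}_k$ still cover everything.

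Next I would iterate: apply the Ramsey property of $\mathcal{P}_2$ to the basic open set $[\emptyset,A_1]$ to get $A_2\in[\emptyset,A_1]$ that either lies entirely in $\mathcal{P}_2$ (in which case $H=A_2$ finishes the proof) or avoids $\mathcal{P}_2$ entirely. Continuing this finite recursion for $i=1,\ldots,k$, at each stage I either find the desired homogeneous $H=A_i$ with $\mathcal{R}\!\restriction\! A_i\subseteq\mathcal{P}_i$, or I produce a descending chain $A_0\geq A_1\geq\cdots\geq A_k$ such that $[\emptyset,A_i]\cap\mathcal{P}_j=\emptyset$ for all $j\leq i$. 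The key structural point, which I would verify using $[\emptyset,A_{i}]\subseteq[\emptyset,A_{i-1}]$, is that avoidance is inherited downward along the chain, so that at the end $[\emptyset,A_k]$ avoids every one of $\mathcal{P}_1,\ldots,\mathcal{P}_k$.

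The only obstacle, and it is a mild one, is the terminal case: if the recursion never lands in the first alternative, then $[\emptyset,A_k]$ is a nonempty basic open set disjoint from $\bigcup_{i=1}^{k}\mathcal{P}_i=\mathcal{R}$, which is absurd since $A_k$ itself belongs to $[\emptyset,A_k]$. This contradiction forces some stage $i$ to realize $\mathcal{R}\!\restriction\! A_i\subseteq\mathcal{P}_i$, and taking $H=A_i$ completes the argument. I would streamline the write-up by phrasing the recursion as a single finite induction on the number of pieces rather than spelling out each of the $k$ steps.
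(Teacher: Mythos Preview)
Your proof is correct and follows essentially the same approach as the paper: both use Proposition~\ref{Abstract_Ellentuck_2} to convert the Baire property into the Ramsey property and then shrink successively inside basic open sets of the form $[\emptyset,A]=\mathcal{R}\!\restriction\! A$. The only cosmetic difference is that the paper treats the case $k=2$ explicitly and then appeals to induction on $k$, whereas you run the finite recursion through all $k$ pieces directly and close with the contradiction at the terminal stage.
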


	\begin{proof}
		For $k=2$, let $\mathcal{R} = \mathcal{P}_{1} \cup \mathcal{P}_{2}$ be any partition of $\mathcal{R}$ into two pieces such that $\mathcal{P}_{1}$ and $\mathcal{P}_{2}$ have the Baire property, being $\mathcal{P}_{2} = \mathcal{R} \setminus \mathcal{P}_{1}$. By virtue of proposition \ref{Abstract_Ellentuck_2} we deduce that $\mathcal{P}_{1}$ is a Ramsey set, then for any basic open set $[\emptyset,A]$ there is $H \in [\emptyset, A]$ such that either $[\emptyset, H] \subseteq \mathcal{P}_{1}$ or $[\emptyset, H] \cap \mathcal{P}_{1} = \emptyset$, where $[\emptyset, H] = \mathcal{R} \!\restriction\! H$. Therefore, the object $H \in \mathcal{R}$ satisfies that either $\mathcal{R} \!\restriction\! H \subseteq \mathcal{P}_{1}$ or $\mathcal{R} \!\restriction\! H \subseteq \mathcal{P}_{2}$. Now, for $k>2$ a completely analogous argument is used and the result is similarly proved by applying induction on $k$.
	\end{proof}

\section{On an abstract version of Kastanas theorem}\label{abstractKastanas}

In this section, we develop an abstract version of Kastanas games in order to provide a game-theoretic characterization of the Ramsey property for combinatorial structures $(\mathcal{R},\leq,r)$. Our goal is to establish necessary and sufficient conditions, formulated through topological games, under which a subset of a selective axiomatized topological Ramsey space has the Ramsey property. We conclude with the \textit{abstract Kastanas theorem}, which encapsulates this characterization and constitutes the main result of our article.

\smallskip

\begin{definition} \label{abstract-Kastanas-game}
	Let $(\mathcal{R},\leq,r)$ be an axiomatized topological Ramsey space, and let $\mathcal{X} \subseteq \mathcal{R}$ and $[a_{0},B_{0}] \neq \emptyset$ be fixed but arbitrary. Then, the \textit{abstract Kastanas game} $\mathcal{G}_{[a_{0},B_{0}]}(\mathcal{X})$ is defined as follows:	
	\begin{equation*}
		\begin{matrix}
			\textrm{I} &  & A_{1} &  & A_{2} &  & \cdots & & A_{k} & & \cdots & \\ 
			\textrm{II} &  &  & [ a_{1},B_{1} ] &  & [ a_{2},B_{2} ] &  & \cdots & & [ a_{k},B_{k} ] & & \cdots
		\end{matrix}
	\end{equation*}
	Two players \textrm{I} and \textrm{II} take turns to play alternately both objects in $\mathcal{R}$ and basic open sets of $\mathcal{R}$, such that the moves of both players respect the following conditions:
	
    \medskip
	
    \textit{Rules of the game.} For every $k\in\mathbb{N}$, both the sequence of objects $\{A_{k}\}_{k\in \mathbb{N}} \subseteq \mathcal{R}$ and $\{B_{k}\}_{k\in \mathbb{N}} \subseteq \mathcal{R}$, as well as the sequence of approximations $\{a_{k}\}_{k\in \mathbb{N}} \subseteq \mathcal{AR}$, must satisfy the following rules:
	\begin{equation*}
		\textbf{(i) } A_{k+1} \in [a_{k},B_{k}] \;\;\;\;\;\;\;\; \textbf{(ii) } a_{k} \sqsubset a_{k+1} \;\;\;\;\;\;\;\; \textbf{(iii) } |a_{k+1}|=|a_{k}|+1 \;\;\;\;\;\;\;\; \textbf{(iv) } B_{k}\in [a_{k},A_{k}]
	\end{equation*}
	Let $B_{\infty} \in [a_{0},B_{0}]$ be the object given by $\{B_{\infty}\} = \bigcap_{k\in\mathbb{N}} [a_{k}]$. Then, for the game $\mathcal{G}_{[a_{0},B_{0}]}(\mathcal{X})$ the following is decided:
	\setlist{nolistsep}
	\begin{itemize}
		\setlength{\itemsep}{0pt}	
		\item[$\bullet$] Player \textrm{I} wins the game if $B_{\infty} \in \mathcal{X}$.
		\item[$\bullet$] Player \textrm{II} wins the game if $B_{\infty} \notin \mathcal{X}$.
	\end{itemize}
\end{definition}

\smallskip

It should be noted that in the definition of the abstract Kastanas game $\mathcal{G}_{[a_{0},B_{0}]}(\mathcal{X})$, we can take $[a_{0},B_{0}] \neq \emptyset$ with $a_{0} \sqsubset B_{0}$, in which case we have that the object $B_{\infty} \in [a_{0},B_{0}]$ is such that $\{B_{\infty}\} = \bigcap_{k\in\mathbb{N}} [a_{k}]$ if and only if $B_{\infty} = \lim_{n_{k}} B_{k}$ where $n_{k}= \mathtt{depth}_{B_{k}}(a_{k})$.

\medskip

A topological game, such as the abstract Kastanas game given in the above definition, is said to be \textit{determined} if one of the players has a winning strategy for the game. The concept of strategy is well known (see, for instance, \cite[chapter 33]{Jech}), so we do not provide a formal definition here, although we can say that a strategy for one of the players is just a function that tells the player what to play at each step depending on the previous moves of both players. Furthermore, a strategy in the game is a winning strategy for one of the players if, using this strategy, the outcome of the game is a win for this player independently of the moves of the other player. 

\medskip 

We begin with the following key result regarding axiomatized topological Ramsey spaces, which characterizes when player \textrm{I} has a winning strategy in the abstract Kastanas game.

\smallskip

\begin{proposition} \label{strategy-I}
	Let $(\mathcal{R},\leq,r)$ be an axiomatized topological Ramsey space, and let $\mathcal{X} \subseteq \mathcal{R}$ and $[a,M] \neq \emptyset$ be given. Then, the player \textrm{I} has a winning strategy in the game $\mathcal{G}_{[a,M]}(\mathcal{X})$ if and only if there is $H\in [a,M]$ such that $[a,H] \subseteq \mathcal{X}$.
\end{proposition}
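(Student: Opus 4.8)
The plan is to prove the two implications separately. The reverse implication is immediate, whereas the forward one requires a strategy-guided fusion and is the heart of the matter.

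For the implication from right to left, suppose $H\in[a,A]$ satisfies $[a,H]\subseteq\mathcal{X}$, and let player \textrm{I} follow the strategy that first plays $A_{1}=H$ --- legal since $H\in[a,A]=[a_{0},B_{0}]$ --- and thereafter plays any legal move, for instance $A_{k+1}=B_{k}$ (legal because $B_{k}\in[a_{k},B_{k}]$). By rule \textbf{(iv)}, \textrm{II}'s first response must satisfy $B_{1}\in[a_{1},A_{1}]$, so $B_{1}\leq A_{1}=H$; as the $B_{k}$ form a decreasing fusion with limit $B_{\infty}\leq B_{1}$ and $r_{|a|}(B_{\infty})=a_{0}=a$, every play ends with $B_{\infty}\in[a,H]\subseteq\mathcal{X}$. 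Hence \textrm{I} wins every play and this strategy is winning.

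For the converse, fix a winning strategy $\sigma$ for \textrm{I}. The goal is to build $H\in[a,A]$ so that each $X\in[a,H]$ is the outcome $B_{\infty}$ of some legal $\sigma$-consistent play; since $\sigma$ wins, this forces $X\in\mathcal{X}$ and hence $[a,H]\subseteq\mathcal{X}$. The guiding observation is that the outcome of a play depends only on the stems chosen by \textrm{II}, because $\{B_{\infty}\}=\bigcap_{k}[a_{k}]$; thus to realize a prescribed $X$ it suffices that \textrm{II} can legally set $a_{k}=r_{|a|+k}(X)$ at each round, and by rule \textbf{(iv)} this is possible as long as the relevant response $A_{k}=\sigma(\cdots)$ satisfies $X\leq A_{k}$. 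Accordingly, I would obtain $H$ as the limit $H=\lim_{m_{n}}H_{n}$ of a fusion sequence $\{[m_{n},H_{n}]\}_{n\in\mathbb{N}}$: at stage $n$ only finitely many stems are active, since by finitization \textbf{A.2.1} the set of $t$ with $t\leq_{\text{fin}}r_{m_{n}}(H_{n})$ --- equivalently $\mathtt{depth}_{H_{n}}(t)\leq m_{n}$ --- is finite. For each active stem $t$ with $a\sqsubseteq t$, the canonical $\sigma$-consistent partial play reaching $t$ (ending with \textrm{II}'s move $[t,\cdot]$, \textrm{II} restricting to the objects already built) determines \textrm{I}'s response $A_{t}=\sigma(\cdots)$, which satisfies $t\sqsubset A_{t}$; I would then apply amalgamation \textbf{A.3} successively over these finitely many stems to pass to $H_{n+1}\in[m_{n},H_{n}]$ with $[t,H_{n+1}]\subseteq[t,A_{t}]$ for every active $t$, keeping $r_{m_{n}}(H_{n+1})=r_{m_{n}}(H_{n})$. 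In the limit, for any $X\in[a,H]$ and any $k$, the stem $t=r_{|a|+k}(X)$ is reachable, $A_{t}$ is \textrm{I}'s response there, and $X\in[t,H]\subseteq[t,A_{t}]$, so $X\leq A_{t}$; hence \textrm{II} may legally extend toward $X$, producing a $\sigma$-play with outcome $B_{\infty}=X\in\mathcal{X}$.

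The main obstacle is coordinating $\sigma$ simultaneously over all branches: the game lengthens stems by one symbol per round, so a priori infinitely many length-$(|a|+k)$ stems compete at each level, yet a fusion can refine only finitely much at a time. This tension is reconciled by organizing the construction by depth rather than length --- finitization \textbf{A.2.1} guarantees finitely many active stems per stage, and amalgamation \textbf{A.3} produces a single $H_{n+1}$ accommodating all their strategy-responses. The delicate point, which I would spell out carefully, is to keep the canonical partial runs coherent from one stage to the next, so that in the limit \textrm{II}'s restrictions are genuinely tails of $H$ and the inequality $X\leq A_{t}$ with $t=r_{|a|+k}(X)$ holds for every $X\in[a,H]$ and every $k$; note that only the standard axioms \textbf{A.1}--\textbf{A.4} are used here, so the statement holds for all topological Ramsey spaces, not merely the strong ones.
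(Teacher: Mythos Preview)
Your proposal is correct and follows essentially the same approach as the paper: the easy direction has \textrm{I} open with $H$, and the hard direction builds $H$ as the limit of a fusion sequence organized by depth, using \textbf{A.2.1} to see that only finitely many stems are active at each stage and \textbf{A.3} to amalgamate the strategy's responses to those stems into a single refinement. The paper's proof is simply a fully executed version of the plan you sketch, with the ``delicate point'' you flag --- the coherent bookkeeping of the partial $\sigma$-runs attached to each stem --- handled by tracking, for each active $b$ at stage $k$, its immediate predecessor stem $b^{-}$ (processed at some earlier stage $j$) and the corresponding partial run $\overset{\longrightarrow}{\rho}[b^{-}]$, so that \textrm{I}'s response at $b$ is $\sigma\langle \overset{\longrightarrow}{\rho}[b^{-}];\,A_{j}^{\cdot};\,[b,\cdot]\rangle$.
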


\begin{proof} 
	Given an axiomatized topological Ramsey space $(\mathcal{R},\leq,r)$, we fix a set of objects $\mathcal{X} \subseteq \mathcal{R}$ and a basic open set $[a,M] \neq \emptyset$, for which we assume without loss of generality that $a \sqsubset M$. So, we proceed to consider the abstract Kastanas game $\mathcal{G}_{[a,M]}(\mathcal{X})$.

    \medskip
	
	First of all, suppose that there is an object $H\in [a,M]$ such that $[a,H] \subseteq \mathcal{X}$, in which case it is natural to define a winning strategy for the player \textrm{I} in the game $\mathcal{G}_{[a,M]}(\mathcal{X})$, namely the player \textrm{I} must select  the object $H$ as the first move. Indeed, consider any complete run in the game $\mathcal{G}_{[a,M]}(\mathcal{X})$ of the form 

    \medskip
	
	\begin{minipage}[c]{0.25\linewidth}
		\begin{center}
			$\mathcal{G}_{[a,M]}(\mathcal{X})$
		\end{center}
	\end{minipage}
	\begin{minipage}[c]{0.75\linewidth}
		\begin{equation*}
			\begin{matrix}
				\textrm{I} &  & H &  & A_{2} & & A_{3} & & \cdots & \\ 
				\textrm{II} &  &  & [ a_{1}, B_{1} ] &  & [ a_{2}, B_{2} ] & & [ a_{3}, B_{3} ] &  & \cdots
			\end{matrix}
		\end{equation*}
	\end{minipage}

    \medskip
	
	So, notice that the object $B_{\infty}\in [a,M]$ given by $\{B_{\infty}\} = \bigcap_{k=1}^{\infty} [a_{k}]$ is such that $B_{\infty}\in [a_{k},B_{k}]$ for every $k\in\mathbb{N}-\{0\}$, consequently $B_{\infty} \in [a,H]$ and hence $B_{\infty} \in \mathcal{X}$; therefore, the player \textrm{I} wins the game $\mathcal{G}_{[a,M]}(\mathcal{X})$.

    \medskip
	
	Conversely, suppose that the player \textrm{I} has a winning strategy $\sigma$ in the game $\mathcal{G}_{[a,M]}(\mathcal{X})$, and we consider the strictly increasing sequence $\{n_{k}\}_{n\in\mathbb{N}} \subseteq \mathbb{N}$ described by $n_{k} = \mathtt{depth}_{M}(a)+k = |a|+k$ for each $k\in\mathbb{N}$. Now, our goal is to recursively construct a fusion sequence $\{ [n_{k},A_{k}] \}_{k\in\mathbb{N}}$ such that its limit $H=\lim_{n_{k}} A_{k}$ will satisfy that $[a,H] \subseteq \mathcal{X}$. 

    \medskip
	
	For the first stage of the process, we put $A_{0}\in \mathcal{R}$ and $a_{0}\in \mathcal{AR}$ as $A_{0}=M$ and $a_{0} = r_{n_{0}}(A_{0})=a$. Next, let $A_{1}\in [a_{0},A_{0}]$ be the object determined by $A_{1}= \sigma \langle \emptyset \rangle$, so that $A_{1}$ is the first move of the player \textrm{I} in the game $\mathcal{G}_{[a,M]}(\mathcal{X})$ using the winning strategy $\sigma$, in which case we take the approximation $a_{1} \in \mathcal{AR}$ given by $a_{1} = r_{n_{1}}(A_{1})$, then $a_{0} \sqsubset a_{1}$ and $|a_{1}|=|a_{0}|+1$.

    \medskip
	
	A detailed description of the next stage of the construction will help the reader to go through the general inductive step. In the next stage of the process, by \textbf{A.2.1} we can take the finite set $\{b\in \mathcal{AR} \, | \, a \sqsubset b \wedge \mathtt{depth}_{A_{1}}(b)=n_{1} \} \subseteq \{b\in \mathcal{AR} \, | \, b \leq_{\text{fin}} a_{1} \}$, for which we fix an enumeration $\{ b_{1}^{0}, b_{1}^{1}, \ldots, b_{1}^{p_{1}} \}$ with $p_{1}\in \mathbb{N}$, where $b_{1}^{0}=b_{1}^{p_{1}}=a_{1}$ and $b_{1}^{i} \neq b_{1}^{j}$ for every $1\leq i<j \leq p_{1}$. Next, we recursively construct a finite sequence of objects $C_{1}^{0}, B_{1}^{0}, A_{1}^{0}, \ldots, C_{1}^{p_{1}}, B_{1}^{p_{1}}, A_{1}^{p_{1}}$ in $[a,A_{1}]$, putting initially $C_{1}^{0} =B_{1}^{0} =A_{1}^{0} =A_{1}$. Afterwards, inductively suppose that $C_{1}^{m-1} \geq B_{1}^{m-1} \geq A_{1}^{m-1}$ have been defined for some $1\leq m\leq p_{1}$, where $C_{1}^{m-1} \in [a_{1}]$ and $B_{1}^{m-1}, A_{1}^{m-1} \in [b_{1}^{m-1}]$. Notice that $r_{n_{1}}(C_{1}^{m-1})=a_{1}$, hence $\mathtt{depth}_{A_{1}^{m-1}}(b_{1}^{m-1}) \leq \mathtt{depth}_{C_{1}^{m-1}}(b_{1}^{m-1}) = n_{1}$; thus, by \textbf{A.3.2} there is $C_{1}^{m}\in [a_{1}, C_{1}^{m-1}]$ such that $[b_{1}^{m-1},C_{1}^{m}] \subseteq [b_{1}^{m-1},A_{1}^{m-1}]$. Now, since $r_{n_{1}}(C_{1}^{m})=a_{1}$ then $\mathtt{depth}_{C_{1}^{m}}(b_{1}^{m}) = n_{1}$; thus, by \textbf{A.3.1} we have that $[b_{1}^{m},C_{1}^{m}] \neq \emptyset$, so we take $B_{1}^{m} \in [b_{1}^{m},C_{1}^{m}]$. Later, we consider $A_{1}^{m} \in [b_{1}^{m},B_{1}^{m}]$ determined by $A_{1}^{m} = \sigma \langle A_{1}; [b_{1}^{m},B_{1}^{m}] \rangle$, so that $A_{1}^{m}$ is the second move of the player \textrm{I} in the game $\mathcal{G}_{[a,M]}(\mathcal{X})$ using the winning strategy $\sigma$, in response to the throw $[b_{1}^{m},B_{1}^{m}]$ executed by the player \textrm{II}. As a result, we obtain the desired objects $C_{1}^{m} \geq B_{1}^{m} \geq A_{1}^{m}$ for all $m\leq p_{1}$. Finally, for the last case $b_{1}^{p_{1}}=a_{1}$, we have that $A_{1}^{p_{1}} \in [a_{1}]$ and hence $r_{n_{1}}(A_{1}^{p_{1}}) = a_{1}$; consequently, we define $A_{2}\in [a_{1},A_{1}]$ as the object $A_{2} = A_{1}^{p_{1}}$, and we take $a_{2}\in\mathcal{AR}$ given by $a_{2}=r_{n_{2}}(A_{2})$, then $a_{1} \sqsubset a_{2}$ and $|a_{2}|=|a_{1}|+1$.

    \medskip
	
	In this order of ideas, by induction we fix $k\in\mathbb{N}-\{0\}$ and suppose that the $j$-th stage of the process has been constructed for each $j \in \{0,\ldots,k-1\}$, where we have defined a finite sequence of basic open sets $\{[n_{j},A_{j}]\}_{j=0}^{k}$ such that $[n_{0},A_{0}] \supseteq \cdots \supseteq [n_{k},A_{k}]$, which has associated a finite sequence of approximations $\{a_{j}\}_{j=0}^{k}$ described by $a_{j} = r_{n_{j}}(A_{j})$, hence $a_{0} \sqsubset \cdots \sqsubset a_{k}$ and $|a_{j+1}| = |a_{j}|+1$.

    \medskip

	Now, for the $k$-th stage of the process, we consider the set of approximations $\{ b\in \mathcal{AR} \, | \, a\sqsubset b \wedge \mathtt{depth}_{A_{k}}(b)=n_{k} \}$,  which corresponds to a finite set by virtue of \textbf{A.2.1}, since $\{ b\in \mathcal{AR} \, | \, a\sqsubset b \wedge \mathtt{depth}_{A_{k}}(b)=n_{k} \} \subseteq \{ b\in \mathcal{AR} \, | \, b \leq_{\text{fin}} a_{k} \}$; thus, we fix an enumeration $\{ b\in \mathcal{AR} \, | \, a\sqsubset b \wedge \mathtt{depth}_{A_{k}}(b)=n_{k} \} = \{ b_{k}^{0}, b_{k}^{1}, \ldots, b_{k}^{p_{k}} \}$ with $p_{k}\in \mathbb{N}$, such that $b_{k}^{0}=b_{k}^{p_{k}}=a_{k}$ and $b_{k}^{i} \neq b_{k}^{j}$ for every $1\leq i<j \leq p_{k}$. 

    \medskip
	
	Next, we recursively define a finite sequence of objects $C_{k}^{0}, B_{k}^{0}, A_{k}^{0}, C_{k}^{1}, B_{k}^{1}, A_{k}^{1}, \ldots, C_{k}^{p_{k}}, B_{k}^{p_{k}}, A_{k}^{p_{k}}$ in $[a,A_{k}]$, such that it satisfies the following properties:
	\setlist{nolistsep}
	\begin{enumerate}
		\setlength{\itemsep}{0pt}	
		\item[\it (i)] $C_{k}^{0}= B_{k}^{0}= A_{k}^{0}= A_{k}$.
		\item[\it (ii)] $C_{k}^{m}\geq B_{k}^{m}\geq A_{k}^{m}$ for every $m\in \{0,\ldots,p_{k}\}$.
		\item[\it (iii)] $r_{n_{k}}(C_{k}^{m}) = r_{n_{k}}(A_{k}) = a_{k}$ for every $m\in \{0,\ldots,p_{k}\}$.
		\item[\it (iv)] $C_{k}^{m+1} \in [a_{k}, C_{k}^{m} ]$ for every $m\in \{0,\ldots,p_{k}-1\}$.
		\item[\it (v)] $[b_{k}^{m}, C_{k}^{m+1}] \subseteq [b_{k}^{m}, A_{k}^{m}]$ for every $m\in \{0,\ldots,p_{k}-1\}$.
		\item[\it (vi)] $B_{k}^{m} \in [b_{k}^{m}, C_{k}^{m}]$ for every $m\in \{0,\ldots,p_{k}\}$.
		\item[\it (vii)] $A_{k}^{m} = \sigma \langle \overset{\longrightarrow}{\rho} [b_{j}^{n_{m}}]; A_{j}^{n_{m}} ; [b_{k}^{m}, B_{k}^{m}] \rangle$ for every $m\in \{1,\ldots,p_{k}\}$, where $j\in\{0,\ldots, k-1\}$ and $n_{m} \in \{1,\ldots,p_{j}\}$ are such that $\mathtt{depth}_{A_{j}}(b_{j}^{n_{m}}) = n_{j}$ and $a \sqsubseteq b_{j}^{n_{m}} \sqsubset b_{k}^{m}$ with $|b_{j}^{n_{m}}| = |b_{k}^{m}|-1$, and being $\overset{\longrightarrow}{\rho} [b_{j}^{n_{m}}]$ the partial run in the game $\mathcal{G}_{[a,M]}(\mathcal{X})$ coded by $b_{j}^{n_{m}}$ in some previous stage of the construction, specifically in the $j$-th stage, for which $\sigma \langle \overset{\longrightarrow}{\rho} [b_{j}^{n_{m}}] \rangle = A_{j}^{n_{m}}$.
	\end{enumerate}
	
    \medskip
    
    By induction, suppose that the objects $C_{k}^{m-1} \geq B_{k}^{m-1} \geq A_{k}^{m-1}$ in $[a,A_{k}]$ have been defined for some $1\leq m\leq p_{k}$, where $C_{k}^{m-1} \in [a_{k}]$ and $B_{k}^{m-1}, A_{k}^{m-1} \in [b_{k}^{m-1}]$. Since $r_{n_{k}}(C_{k}^{m-1}) = r_{n_{k}}(A_{k}) = a_{k}$, then $\mathtt{depth}_{C_{k}^{m-1}}(b_{k}^{m-1}) = \mathtt{depth}_{A_{k}}(b_{k}^{m-1}) = n_{k}$, hence $\mathtt{depth}_{A_{k}^{m-1}}(b_{k}^{m-1}) \leq \mathtt{depth}_{C_{k}^{m-1}}(b_{k}^{m-1}) < \infty$; so, by applying \textbf{A.3.2} we infer that there is an object $C_{k}^{m}\in [a_{k}, C_{k}^{m-1}]$ such that $[b_{k}^{m-1},C_{k}^{m}] \subseteq [b_{k}^{m-1},A_{k}^{m-1}]$. Also, notice that $\mathtt{depth}_{C_{k}^{m}}(b_{k}^{m}) = \mathtt{depth}_{A_{k}}(b_{k}^{m}) = n_{k}$ because $C_{k}^{m}\in [a_{k}, A_{k}]$ with $a_{k} = r_{n_{k}}(A_{k})$; thus, by \textbf{A.3.1} we have that $\mathtt{depth}_{C_{k}^{m}}(b_{k}^{m})<\infty$ implies $[b_{k}^{m},C_{k}^{m}] \neq \emptyset$, then we take any object $B_{k}^{m} \in [b_{k}^{m},C_{k}^{m}]$.

    \medskip
	
    Furthermore, we know that $b_{k}^{m} \leq_{\text{fin}} a_{k}$; thus, if $d\in \mathcal{AR}$ is the approximation such that $a \sqsubseteq d \sqsubset b_{k}^{m}$ and $|d| = |b_{k}^{m}|-1$, then by \textbf{A.2.3} we deduce that there is an approximation $a_{j} \sqsubset a_{k}$ with $j\in\{0,\ldots,k-1\}$ such that $d \leq_{\text{fin}} a_{j}$; in fact, we can assume that $j$ is the smallest satisfying this property, in which case $\mathtt{depth}_{A_{j}}(d)=n_{j}$ because $a_{j} = r_{n_{j}}(A_{j})$, consequently there is some $n_{m}\in\{1,\ldots,p_{j}\}$ such that $d= b_{j}^{n_{m}}$, therefore $\mathtt{depth}_{A_{j}} (b_{j}^{n_{m}}) = n_{j}$ and $a \sqsubseteq b_{j}^{n_{m}} \sqsubset b_{k}^{m}$ with $|b_{j}^{n_{m}}| = |b_{k}^{m}|-1$. 

    \medskip
	
    Now, in the $j$-th stage of the previously constructed process, we consider the partial run $\overset{\longrightarrow}{\rho} [b_{j}^{n_{m}}]$ in the game $\mathcal{G}_{[a,M]}(\mathcal{X})$ coded by the approximation $b_{j}^{n_{m}}$, for which we have that $\sigma \langle \overset{\longrightarrow}{\rho} [b_{j}^{n_{m}}] \rangle = A_{j}^{n_{m}}$; thus, at some point, $A_{j}^{n_{m}}$ was the last move made by the player $\textrm{I}$ according to the winning strategy $\sigma$. To continue, notice that $B_{k}^{m} \in [b_{k}^{m}, A_{j}^{n_{m}}]$, since $B_{k}^{m}\in [b_{k}^{m}, C_{k}^{m}] \subseteq [b_{k}^{m}, A_{k}] \subseteq [b_{j}^{n_{m}}, C_{j}^{n_{m}+1}] \subseteq [b_{j}^{n_{m}}, A_{j}^{n_{m}}]$ whenever $1\leq n_{m}<p_{j}$, and also $B_{k}^{m}\in [b_{k}^{m}, C_{k}^{m}] \subseteq [b_{k}^{m}, A_{k}] \subseteq [a_{j}, A_{j+1}]$ if $n_{m}=p_{j}$. Therefore, we define the object $A_{k}^{m} \in [b_{k}^{m},B_{k}^{m}]$ given by $A_{k}^{m} = \sigma \langle \overset{\longrightarrow}{\rho} [b_{j}^{n_{m}}] ; A_{j}^{n_{m}} ; [b_{k}^{m},B_{k}^{m}] \rangle$, so that $A_{k}^{m}$ represents the next move of the player \textrm{I} in the game $\mathcal{G}_{[a,M]}(\mathcal{X})$ using the winning strategy $\sigma$, in response to the throw $[b_{k}^{m},B_{k}^{m}]$ executed by the player \textrm{II}. Consequently, in this way we have constructed the desired objects $C_{k}^{m} \geq B_{k}^{m} \geq A_{k}^{m}$ for all $m\leq p_{k}$.

    \medskip
	
    In the end, for the last case corresponding to $b_{k}^{p_{k}}=a_{k}$, we obtain that $A_{k}^{p_{k}} = \sigma \langle \overset{\longrightarrow}{\rho} [a_{k-1}] ; A_{k} ; [a_{k},B_{k}^{p_{k}}] \rangle$, then $A_{k}^{p_{k}} \in [a_{k},B_{k}^{p_{k}}]$ and hence $r_{n_{k}} (A_{k}^{p_{k}})=a_{k}$. Therefore, we finally define the object $A_{k+1}\in [a_{k},A_{k}]$ by setting $A_{k+1} = A_{k}^{p_{k}}$, and we take the approximation $a_{k+1}\in\mathcal{AR}$ given by $a_{k+1}=r_{n_{k+1}}(A_{k+1})$, so that $a_{k} \sqsubset a_{k+1}$ and $|a_{k+1}|=|a_{k}|+1$.
    
    \medskip
	
	In this way, we have successfully constructed, among other considerations, the fusion sequence $\{ [n_{k},A_{k}] \}_{k\in\mathbb{N}} \subseteq \wp([a,M])$, where $n_{k} = \mathtt{depth}_{M}(a)+k =|a|+k$ and $r_{n_{k}}(A_{k})=a_{k}$ for each $k\in\mathbb{N}$. Thus, if we define the object $H\in [a,M]$ as the limit $H= \lim_{n_{k}} A_{k}$ of the fusion sequence $\{ [n_{k},A_{k}] \}_{k\in\mathbb{N}}$, then $H\in [a_{k},A_{k}]$ for every $k\in \mathbb{N}$, and consequently $\{H\} = \bigcap_{k\in\mathbb{N}} [a_{k}]$.

    \medskip
	
	Next, we proceed to consider the following complete run in the game $\mathcal{G}_{[a,M]}(\mathcal{X})$:

    \medskip
	
	\begin{minipage}[c]{0.25\linewidth}
		\begin{center}
			$\mathcal{G}_{[a,M]}(\mathcal{X})$
		\end{center}
	\end{minipage}
	\begin{minipage}[c]{0.75\linewidth}
		\begin{equation*}
			\begin{matrix}
				\textrm{I} &  & A_{1} &  & A_{2} & & A_{3} & & \cdots & \\ 
				\textrm{II} &  &  & [ a_{1}, B_{1}^{p_{1}} ] &  & [ a_{2}, B_{2}^{p_{2}} ] & & [ a_{3}, B_{3}^{p_{3}} ] &  & \cdots
			\end{matrix}
		\end{equation*}
	\end{minipage}

    \medskip
	
	Therefore, this complete run is obtained by the player \textrm{I} applying the winning strategy $\sigma$ in the game $\mathcal{G}_{[a,M]}(\mathcal{X})$, since $A_{1} = \sigma \langle \emptyset \rangle$ and $A_{k+1} = \sigma \langle \overset{\longrightarrow}{\rho} [a_{k-1}] ; A_{k} ; [a_{k},B_{k}^{p_{k}}] \rangle$ for every $k\in \mathbb{N}-\{0\}$. As a result, we deduce that the player \textrm{I} wins the game $\mathcal{G}_{[a,M]}(\mathcal{X})$; thus, we conclude that $H\in \mathcal{X}$ because $\{H\} = \bigcap_{k\in\mathbb{N}} [a_{k}]$.

    \medskip
	
	Finally, let us verify that the object $H\in [a,M] \cap \mathcal{X}$ satisfies that $[a,H] \subseteq \mathcal{X}$. Indeed, if $B\in [a,H]$ then $B\leq H$ with $r_{|a|}(B) = a = r_{|a|}(H)$, thus, by virtue of \textbf{A.2.2}, we infer that for each $m\in \mathbb{N}$ there is some $j_{m} \in \mathbb{N}$ such that $r_{|a|+m}(B) \leq_{\text{fin}} r_{|a|+j_{m}} (H)$, and hence $r_{|a|+m}(B) \leq_{\text{fin}} a_{j_{m}}$; in addition, without loss of generality, we may assume that $j_{m}$ is the smallest satisfying this property, in which case $\mathtt{depth}_{A_{j_{m}}}(r_{|a|+j_{m}}(B)) = n_{j_{m}}$, consequently $r_{|a|+j_{m}}(B) = b_{j_{m}}^{i_{m}}$ for some $i_{m}\in \{1,\ldots,p_{j_{m}}\}$. Therefore, the object $B\in[a,H]$ satisfies that $\{B\} = \bigcap_{m\in\mathbb{N}} [b_{j_{m}}^{i_{m}}]$, and in view of the foregoing, we now proceed to consider the following complete run in the game $\mathcal{G}_{[a,M]}(\mathcal{X})$:

    \medskip
	
	\begin{minipage}[c]{0.25\linewidth}
		\begin{center}
			$\mathcal{G}_{[a,M]}(\mathcal{X})$
		\end{center}
	\end{minipage}
	\begin{minipage}[c]{0.75\linewidth}
		\begin{equation*}
			\begin{matrix}
				\textrm{I} &  & A_{1} &  & A_{j_{2}}^{i_{2}} & & A_{j_{3}}^{i_{3}} & & \cdots & \\ 
				\textrm{II} &  &  & [ b_{j_{1}}^{i_{1}} , B_{j_{1}}^{i_{1}} ] &  & [ b_{j_{2}}^{i_{2}} , B_{j_{2}}^{i_{2}} ] & & [ b_{j_{3}}^{i_{3}} , B_{j_{3}}^{i_{3}} ] &  & \cdots
			\end{matrix}
		\end{equation*}
	\end{minipage}

    \medskip
	
	It should be noted that this complete run is obtained by applying the winning strategy $\sigma$ of the player \textrm{I} in the game $\mathcal{G}_{[a,M]}(\mathcal{X})$, hence the player \textrm{I} wins the game $\mathcal{G}_{[a,M]}(\mathcal{X})$; as a result, we deduce that $B\in \mathcal{X}$ because $\{B\} = \bigcap_{m\in\mathbb{N}} [b_{j_{m}}^{i_{m}}]$. Consequently, we finally conclude that $[a,H] \subseteq \mathcal{X}$.
\end{proof}

\smallskip

Motivated by the concept of selective coideal on the Ellentuck space, given by Mathias in \cite{Mathias}, as well as by the abstract approach of local Ramsey theory in the context of axiomatized  topological Ramsey spaces, proposed by Di Prisco, Mijares, and Nieto in \cite{DiPrisco-Mijares-Nieto}, we now introduce the new broad class of selective axiomatized topological Ramsey spaces.

\smallskip

\begin{definition} 
	Let $(\mathcal{R},\leq,r)$ be an axiomatized topological Ramsey space. Given any $[a,B] \neq \emptyset$ and a sequence $\{D_{n}\}_{n\in \mathbb{N}} \subseteq [a,B]$, we say that an object $D_{\infty} \in [a,B]$ is a \textit{diagonalization} of $\{D_{n}\}_{n\in \mathbb{N}}$ whenever $\emptyset \neq [b,D_{\infty}] \subseteq [b,D_{n_{b}}]$ for every $b\in \mathcal{AR} \!\restriction\! [a,D_{\infty}]$, where $n_{b} =\mathtt{depth}_{B}(b) - \mathtt{depth}_{B}(a)$.
\end{definition}


\begin{definition}
	An axiomatized topological Ramsey space $(\mathcal{R},\leq,r)$ is said to be \textit{selective}, if for any $[a,B] \neq \emptyset$ and for all sequence $\{D_{n}\}_{n\in \mathbb{N}} \subseteq [a,B]$ such that $D_{n+1} \leq D_{n}$ for each $n\in\mathbb{N}$, there exists some $D_{\infty} \in [a,B]$ such that $D_{\infty}$ is a diagonalization of $\{D_{n}\}_{n\in \mathbb{N}}$.
\end{definition}

\smallskip

We point out that many well known examples of axiomatized topological Ramsey spaces are selective; however, there are exceptions, for instance, spaces  of strong subtrees (see \cite{Milliken3, Todorcevic(BookRamsey)}).

\medskip

We turn now to the case when player \textrm{II} has a winning strategy in the abstract Kastanas game. To do this, first we show a lemma about selectivity that we will need later, which is inspired by the development of a local version of Kastanas games given by Matet in \cite{Matet1}.

\smallskip

\begin{lemma} \label{game-lemma}
	Let $(\mathcal{R},\leq,r)$ be a selective axiomatized topological Ramsey space. Let $[a,B]\neq \emptyset$ with $a\sqsubset B$, and let $f:[a,B] \rightarrow \mathcal{AR}_{|a|+1}$ and $g:[a,B] \rightarrow [a,B]$ be functions such that $a\sqsubset f(A)$ and $g(A)\in [f(A),A]$ for each $A\in [a,B]$. Then, there exists some $E_{f,g}\in [a,B]$ which satisfies that for every $q\in r_{|a|+1} \text{''}\, [a,E_{f,g}]$ there is $A\in [a,B]$ such that $f(A)=q$ and $\emptyset \neq [q,E_{f,g}] \subseteq [q,g(A)]$.
\end{lemma}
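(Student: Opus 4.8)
The plan is to realize $E_{f,g}$ as the diagonalization, guaranteed by selectivity, of a carefully chosen decreasing sequence $\{D_n\}_{n\in\mathbb{N}}\subseteq[a,B]$ built directly from $f$ and $g$. The guiding observation is that the target clause $[q,E_{f,g}]\subseteq[q,g(A)]$ has exactly the shape of the inclusion $[b,D_\infty]\subseteq[b,D_{n_b}]$ furnished by a diagonalization; so if I can arrange the $n$-th term $D_n$ to sit inside the $g$-image of a witness for the immediate successors of $a$ of relative depth $n$, then the conclusion will drop out by reading the diagonalization inequality at $b=q$.

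First I would, as a safety reduction, apply the abstract pigeonhole \textbf{A.4} to the coloring that marks an immediate successor $q\in r_{|a|+1}\text{''}[a,B]$ according to whether it has a witness, i.e.\ some $A\in[a,B]$ with $f(A)=q$, passing to $B_1\in[a,B]$ on which this color is constant. The all-\emph{un}witnessed alternative is impossible: since $g(A)\in[f(A),A]$, we have $f(A)=r_{|a|+1}(g(A))\in r_{|a|+1}\text{''}[a,A]$, so every $A\in[a,B_1]$ already witnesses its own value $f(A)$. This ensures that every successor I shall ever need to handle genuinely has a witness.

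Next I would construct $\{D_n\}$ recursively, feeding the current object through $f$ and $g$: given $D_n$, take a witness $A_n$ with $q_n=f(A_n)$ and $g(A_n)\in[q_n,A_n]$, and let $D_{n+1}\leq D_n$ be extracted from $g(A_n)$ by the amalgamation axioms \textbf{A.3}/\textbf{A.3*}, restricting below $q_n$ so that $[q_n,D_{n+1}]\subseteq[q_n,g(A_n)]$ while pushing the next first approximation strictly deeper. Recording, for each relative depth $m$, the witness $A_m$ together with the inclusion $[q_m,\,\cdot\,]\subseteq[q_m,g(A_m)]$, I would then invoke selectivity on the decreasing sequence $\{D_n\}$ to obtain $E_{f,g}=D_\infty\in[a,B]$ satisfying $[b,E_{f,g}]\subseteq[b,D_{n_b}]$ for every $b\in\mathcal{AR}\!\restriction\![a,E_{f,g}]$, where $n_b=\mathtt{depth}_B(b)-\mathtt{depth}_B(a)$. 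Finally, for any $q\in r_{|a|+1}\text{''}[a,E_{f,g}]$ with $m=\mathtt{depth}_B(q)-\mathtt{depth}_B(a)$, the diagonalization inequality would give $[q,E_{f,g}]\subseteq[q,D_m]\subseteq[q,g(A_m)]$, with $A_m$ the required witness since $f(A_m)=q$.

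The hard part will be precisely this synchronization: the recursion produces its data indexed by construction step, whereas the diagonalization indexes by the relative-depth function $n_b$, so the delicate point is to engineer the sequence — by controlling the depth of each $f(A_n)$, by the ``shift'' that returns from the stem $q_n$ to a stem-$a$ object on which $f$ and $g$ can be reapplied, and by dealing with the finitely many successors that may share a given relative depth — so that the successor of $a$ appearing in $E_{f,g}$ at relative depth $m$ is exactly $q_m$ and is matched to the witness $A_m$. This is also exactly where selectivity is indispensable rather than mere amalgamation: the clauses $[q_i,E_{f,g}]\subseteq[q_i,g(A_{q_i})]$ jointly force the tail of $E_{f,g}$ into nested $g$-images, and a naive finite intersection of the a priori unrelated images $g(A_{q_i})$ need not remain large, so it is the diagonalization property of a selective space — letting the single object $D_{n_b}$ absorb the entire tail past $b$ — that keeps the construction alive.
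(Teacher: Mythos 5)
Your overall architecture is the same as the paper's: build a decreasing sequence $\{D_n\}\subseteq[a,B]$ out of $g$-images, diagonalize it using selectivity, and read the conclusion off the diagonalization inclusion at relative depth $n_q=\mathtt{depth}_B(q)-|a|$. But the step you defer as ``the hard part'' is exactly where the content of the proof lies, and as written your recursion step does not go through. Given the current object $D_n$ and a successor $q$, you propose to take \emph{some} witness $A$ with $f(A)=q$ and then ``extract $D_{n+1}\leq D_n$ from $g(A)$ by the amalgamation axioms, restricting below $q$ so that $[q,D_{n+1}]\subseteq[q,g(A)]$.'' Amalgamation (\textbf{A.3.2}) only lets you refine $[b,A]$ inside $[b,A']$ when $A\leq A'$; it gives you nothing when $g(A)$ and $D_n$ are $\leq$-incomparable, which they will be in general, since the witness $A$ is an arbitrary element of $[a,B]$ with no relation to the objects already constructed. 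There is no axiom producing a common refinement of two incomparable objects, and indeed none can exist. Your initial ``safety reduction'' (colouring successors by whether they possess \emph{any} witness in $[a,B]$ and shrinking by \textbf{A.4}) does not repair this: the relevant question is not whether $q$ has a witness, but whether it has a witness $A$ whose image $g(A)$ already lies in $[a,D_n^{i-1}]$ for the current object of the construction, and that cannot be decided before the construction is run.

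The paper resolves this by inverting the order of your two moves. At stage $n$ it loops over \emph{all} finitely many successors $p$ of depth $|a|+n$ (finiteness by \textbf{A.2.1}), and for each one descends into a $g$-image of a witness \emph{only if} such an image exists below the current object (formally, if $\mathcal{W}_p\cap[a,D_n^{i-1}]\neq\emptyset$ where $\mathcal{W}_p$ is the set of $g$-images of witnesses of $p$), skipping otherwise. Only \emph{after} diagonalizing to $D_\infty$ does it apply \textbf{A.4} --- to the set $\mathcal{O}$ of successors that were successfully handled --- to obtain $E_{f,g}\leq D_\infty$ all of whose immediate successors lie in $\mathcal{O}$; the ``bad'' alternative of \textbf{A.4} is excluded because $f(E_{f,g})$ itself belongs to $\mathcal{O}\cap r_{|a|+1}\text{''}[a,E_{f,g}]$, and verifying this membership uses the diagonalization inclusion, so the pigeonhole step genuinely cannot be performed up front as you propose. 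With that reordering (plus the inner loop over the finitely many successors of a fixed depth, which you flag but do not carry out), your plan becomes the paper's proof; without it, the sequence $\{D_n\}$ cannot be constructed.
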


\begin{proof}
	Let $(\mathcal{R},\leq,r)$ be a selective axiomatized topological Ramsey space. Given a basic open set $[a,B]\neq \emptyset$ with $a\sqsubset B$, we fix functions $f:[a,B] \rightarrow \mathcal{AR}_{|a|+1}$ and $g:[a,B] \rightarrow [a,B]$ such that $a\sqsubset f(A)$ and $g(A)\in [f(A),A]$ for each $A\in [a,B]$.

    \medskip
	
	First of all, for every $p\in r_{|a|+1} \text{''}\, [a,B]$ we consider the set $\mathcal{W}_{p} \subseteq [a,B]$ defined by 
	\begin{center}
			$\mathcal{W}_{p}= \{ D\in[a,B] \,|\, ( \exists\, A\in[a,B] ) ( g(A)=D \wedge f(A)=p ) \}$. 
	\end{center}
	We proceed by recursively constructing a sequence of objects $\{D_{n}\}_{n\in\mathbb{N}} \subseteq [a,B]$, with $D_{0}=B$ and $D_{n+1} \leq D_{n}$ for all $n\in \mathbb{N}$, as follows:

    \medskip
	
	By induction, let $n\in\mathbb{N}$ and suppose that the objects $D_{0} \geq D_{1} \geq \cdots \geq D_{n-1}$ in $[a,B]$ have been constructed. Now, let $\Gamma_{n} = \{ p\in r_{|a|+1} \text{''}\, [a,B] \,|\, \mathtt{depth}_{B}(p) = |a|+n \}$, then the set $\Gamma_{n}$ is finite by virtue of \textbf{A.2.1}, since $\Gamma_{n} \subseteq \{ p\in\mathcal{AR} \,|\, p \leq_{\text{fin}} r_{|a|+n}(B) \}$; so, we fix any enumeration $\Gamma_{n} = \{ p_{n}^{1}, \ldots, p_{n}^{k_{n}} \}$ with $k_{n}\in \mathbb{N}$. Next, we recursively define a finite sequence of objects $D_{n}^{0}, D_{n}^{1}, \ldots, D_{n}^{k_{n}}$ in $[a,D_{n-1}]$. To do this, we put $D_{n}^{0} = D_{n-1}$, and assuming that the object $D_{n}^{i-1}$ has been determined for $1\leq i \leq k_{n}$, we consider the set $\mathcal{W}_{p_{n}^{i}} \cap [a,D_{n}^{i-1}]$ and analyze the following cases: on the one hand, if $\mathcal{W}_{p_{n}^{i}} \cap [a,D_{n}^{i-1}]=\emptyset$, then we take $D_{n}^{i}=D_{n}^{i-1}$; on the other hand, if $\mathcal{W}_{p_{n}^{i}} \cap [a,D_{n}^{i-1}] \neq \emptyset$, then we take $D_{n}^{i}$ as any fixed object $D_{n}^{i} \in \mathcal{W}_{p_{n}^{i}} \cap [a,D_{n}^{i-1}]$. In this way, we get finitely many objects $D_{n}^{0} \geq D_{n}^{1} \geq \cdots \geq D_{n}^{k_{n}}$ in $[a,D_{n-1}]$; thus, we define $D_{n} \in[a,D_{n-1}]$ as the object $D_{n}= D_{n}^{k_{n}}$.

    \medskip
	
	As an additional remark to the construction just explained, it should be noted that the family of approximations $r_{|a|+1} \text{''}\, [a,B]$ can be described in terms of the sequence of finite sets $\{ \Gamma_{n} \}_{n\in\mathbb{N}}$ as follows:   
	\begin{center}
		$r_{|a|+1} \text{''}\, [a,B] = \bigcup_{n\in\mathbb{N}} \Gamma_{n} = \{ p_{n}^{i} \,|\, n\in\mathbb{N} \wedge 1\leq i\leq k_{n} \}$.
	\end{center}

    \medskip
    
	Now, we know that the sequence of objects $\{D_{n}\}_{n\in\mathbb{N}} \subseteq [a,B]$ is decreasing with respect to $\leq$, then, using the fact that the axiomatized topological Ramsey space $(\mathcal{R},\leq,r)$ is selective, we get an object $D_{\infty} \in [a,B]$ that corresponds to a diagonalization of the sequence $\{D_{n}\}_{n\in\mathbb{N}}$; therefore, for every $b\in \mathcal{AR} \!\restriction\! [a,D_{\infty}]$, if $\mathtt{depth}_{B}(b) = |a|+n$ then $\emptyset \neq [b,D_{\infty}] \subseteq [b,D_{n}]$.

    \medskip
	
	Next, let $\mathcal{O} \subseteq \mathcal{AR}_{|a|+1}$ be the collection of approximations defined by
	\begin{center}
		$\mathcal{O} = \{ p_{n}^{i} \in r_{|a|+1} \text{''}\, [a,B] \;|\; \mathcal{W}_{p_{n}^{i}} \cap [a,D_{n}^{i-1}] \neq \emptyset \}.$
	\end{center} 
	Applying the abstract pigeonhole principle \textbf{A.4} to the family $\mathcal{O}$, we deduce that there exists an object $E_{f,g} \in [a,D_{\infty}] \subseteq [a,B]$ such that $r_{|a|+1} \text{''}\, [a,E_{f,g}]$ is either contained in $\mathcal{O}$ or disjoint from $\mathcal{O}$, so
	\begin{center}
		$r_{|a|+1} \text{''}\, [a,E_{f,g}] \subseteq \mathcal{O} \;\;\;\;\; \vee \;\;\;\;\; \mathcal{O} \cap r_{|a|+1} \text{''}\, [a,E_{f,g}] =\emptyset$. 
	\end{center} 

    \medskip
    
	Nevertheless, we claim that the second option is not possible, since  $f(E_{f,g}) \in \mathcal{O} \cap r_{|a|+1} \text{''}\, [a,E_{f,g}]$. Indeed, on the one hand, $a \sqsubset f(E_{f,g})$ with $|f(E_{f,g})|=|a|+1$ and also $g(E_{f,g}) \in [ f(E_{f,g}), E_{f,g} ]$, so that $r_{|a|+1} (g(E_{f,g})) = f(E_{f,g})$, and hence $f(E_{f,g}) \in r_{|a|+1} \text{''}\, [a,E_{f,g}]$. On the other hand, $f(E_{f,g}) = p_{n}^{i}$ for some $n\in\mathbb{N}$ and $1\leq i\leq k_{n}$, so that $\mathtt{depth}_{B} (f(E_{f,g})) =|a|+n$ and also $g(E_{f,g}) \in \mathcal{W}_{p_{n}^{i}}$, then $[ f(E_{f,g}),E_{f,g} ] \subseteq [ f(E_{f,g}),D_{\infty} ] \subseteq [ f(E_{f,g}),D_{n} ] \subseteq [a,D_{n}] \subseteq [a,D_{n}^{i-1}]$, thus $g(E_{f,g}) \in [ a,D_{n}^{i-1} ]$; as a result, we deduce that $\mathcal{W}_{f(E_{f,g})} \cap [a,D_{n}^{i-1}] = \mathcal{W}_{p_{n}^{i}} \cap [a,D_{n}^{i-1}] \neq \emptyset$, and hence $f(E_{f,g}) \in \mathcal{O}$. Consequently, we must have that $r_{|a|+1} \text{''}\, [a,E_{f,g}] \subseteq \mathcal{O}$.

    \medskip
	
	Finally, let us verify that for every $q\in r_{|a|+1} \text{''}\, [a,E_{f,g}]$ there is some $A\in [a,B]$ such that $f(A)=q$ and $\emptyset \neq [q,E_{f,g}] \subseteq [q,g(A)]$. Indeed, if $q\in r_{|a|+1} \text{''}\, [a,E_{f,g}]$ then $q=p_{n}^{i}$ for some $n\in\mathbb{N}$ and $1\leq i\leq k_{n}$, and since $r_{|a|+1} \text{''}\, [a,E_{f,g}] \subseteq \mathcal{O}$, it follows that $q\in \mathcal{O}$. Thus, we have that $\mathcal{W}_{p_{n}^{i}} \cap [a,D_{n}^{i-1}] \neq \emptyset$, and in particular, this implies that $D_{n}^{i} \in \mathcal{W}_{p_{n}^{i}}$, that is $D_{n}^{i} \in \mathcal{W}_{q}$. Then, there exists some $A \in [a,B]$ such that $f(A)=q$ and $g(A)=D_{n}^{i}$, so that $[q,g(A)] = [q,D_{n}^{i}]$. Moreover, since $\mathtt{depth}_{B} (q) =|a|+n$, we deduce that $[q,E_{f,g}] \subseteq [q,D_{\infty}] \subseteq [q,D_{n}] \subseteq [q,D_{n}^{i}]$, which is why we conclude that $\emptyset \neq [q,E_{f,g}] \subseteq [q,g(A)]$.	
\end{proof}

\smallskip

Applying the above lemma, we now consider selective axiomatized topological Ramsey spaces to derive the following key result that characterizes when player \textrm{II} has a winning strategy in the abstract Kastanas game.

\smallskip

\begin{proposition} \label{strategy-II}
	Let $(\mathcal{R},\leq,r)$ be a selective axiomatized topological Ramsey space, and let $\mathcal{X} \subseteq \mathcal{R}$ and $[a,M] \neq \emptyset$ be given. Then, the player \textrm{II} has a winning strategy in the game $\mathcal{G}_{[a,M]}(\mathcal{X})$ if and only if for every $N\in[a,M]$ there is $H\in [a,N]$ such that $[a,H] \cap \mathcal{X} = \emptyset$.
\end{proposition}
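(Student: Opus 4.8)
The plan is to prove the two implications separately, observing that the backward direction is a short one-move argument while the forward direction is the substantive part, and is where selectivity (through Lemma \ref{game-lemma}) enters.

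For the backward direction, assume that for every $N\in[a,M]$ there is $H\in[a,N]$ with $[a,H]\cap\mathcal{X}=\emptyset$; I would exhibit an explicit winning strategy for player \textrm{II}. Suppose player \textrm{I} opens with $A_{1}\in[a,M]$. Applying the hypothesis to $N=A_{1}$ yields $H_{1}\in[a,A_{1}]$ with $[a,H_{1}]\cap\mathcal{X}=\emptyset$, so player \textrm{II} responds with $[a_{1},B_{1}]$, where $a_{1}=r_{|a|+1}(H_{1})$ and $B_{1}=H_{1}$; this is a legal move since $a\sqsubset a_{1}$, $|a_{1}|=|a|+1$ and $H_{1}\in[a_{1},A_{1}]$. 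After this, player \textrm{II} may continue with any legal moves. Because $[a_{1},B_{1}]=[a_{1},H_{1}]\subseteq[a,H_{1}]$ and the limit always satisfies $B_{\infty}\in[a_{k},B_{k}]$ for every $k\geq 1$, we obtain $B_{\infty}\in[a,H_{1}]$ and hence $B_{\infty}\notin\mathcal{X}$. Thus the strategy is winning, and this direction uses neither selectivity nor Lemma \ref{game-lemma}.

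For the forward direction, assume player \textrm{II} has a winning strategy $\tau$ and fix $N\in[a,M]$, which we may take with $a\sqsubset N$. The goal is to produce $H\in[a,N]$ with $[a,H]\cap\mathcal{X}=\emptyset$, and the method parallels the converse part of Proposition \ref{strategy-I}: I would recursively build a fusion sequence $\{[n_{k},A_{k}]\}_{k\in\mathbb{N}}$ with $n_{k}=|a|+k$ and $A_{0}=N$, whose limit $H$ traps the game inside $\mathcal{X}^{\complement}$. At a position coded by a frontier approximation $b$ in which player \textrm{II} has been following $\tau$, it is player \textrm{I}'s turn; each move $A'$ of player \textrm{I} prompts $\tau$ to return some $[b',B']$ with $b\sqsubset b'$, $|b'|=|b|+1$ and $B'\in[b',A']$. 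Setting $f(A')=b'$ and $g(A')=B'$ produces functions satisfying the hypotheses of Lemma \ref{game-lemma}, so the lemma supplies an object $E_{b}$ with the property that for each one-step extension $q\in r_{|b|+1}\text{''}\,[b,E_{b}]$ there is a move $A'$ of player \textrm{I} for which $\tau$ answers with approximation $q$ and a restriction $g(A')$ satisfying $\emptyset\neq[q,E_{b}]\subseteq[q,g(A')]$. Diagonalizing over the finitely many frontier approximations at each level (finiteness by \textbf{A.2.1}) and fusing yields the desired sequence and its limit $H$.

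Finally, I would check that $[a,H]\cap\mathcal{X}=\emptyset$. Given any $B\in[a,H]$, its successive one-step approximations $a=c_{0}\sqsubset c_{1}\sqsubset c_{2}\sqsubset\cdots$, where $c_{k}=r_{|a|+k}(B)$, trace out a branch through the fusion; at each level the containment $[c_{k},E_{c_{k-1}}]\subseteq[c_{k},g(A')]$ guarantees both that the reconstructed move $A'$ of player \textrm{I} is legal relative to the previous restriction and that the tail $[c_{k},H]$ stays inside player \textrm{II}'s response. This reconstructs a complete legal run in which player \textrm{II} follows $\tau$ and whose outcome is exactly $B_{\infty}=B$; since $\tau$ is winning for player \textrm{II}, we conclude $B\notin\mathcal{X}$, whence $[a,H]\cap\mathcal{X}=\emptyset$. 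The main obstacle is precisely this bookkeeping: arranging the fusion and the coding of partial runs so that every $B\in[a,H]$ corresponds to a genuine $\tau$-run with all of player \textrm{I}'s reconstructed moves legal and all of player \textrm{II}'s restrictions containing the relevant tails. Lemma \ref{game-lemma} is the device that makes this simultaneous control over all of player \textrm{I}'s possible continuations possible, and it is here that selectivity is essential.
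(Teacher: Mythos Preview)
Your backward direction is exactly the paper's argument. For the forward direction your plan is correct, but it is organized differently from the paper. The paper does \emph{not} build $H$ by a direct fusion over all frontier approximations. Instead, from the winning strategy $\tau$ for player \textrm{II} in $\mathcal{G}_{[a,M]}(\mathcal{X})$ it constructs, via Lemma \ref{game-lemma}, a winning strategy $\sigma$ for player \textrm{I} in the complementary game $\mathcal{G}_{[a,N]}(\mathcal{R}\setminus\mathcal{X})$: at each step, given the partial run $[a_{1},B_{1}],\ldots,[a_{n},B_{n}]$ already played, one defines $f_{n+1},g_{n+1}$ on $[a_{n},B_{n}]$ by reading off $\tau$'s next response, applies Lemma \ref{game-lemma} once to obtain $E_{f_{n+1},g_{n+1}}$, and sets $\sigma$'s next move to be this object. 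Having shown $\sigma$ is winning, the paper simply invokes Proposition \ref{strategy-I} to produce $H\in[a,N]$ with $[a,H]\subseteq\mathcal{R}\setminus\mathcal{X}$.

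The practical difference is that the paper applies Lemma \ref{game-lemma} once per \emph{move} of $\sigma$ and delegates all the fusion bookkeeping over frontier approximations (the coding of partial runs by $b$'s, the use of \textbf{A.3.2} to return from $[b,\cdot]$ back to $[n_{k},A_{k}]$, the finite diagonalization at each level) to the already-proved Proposition \ref{strategy-I}. Your route unrolls these two steps into a single recursive construction of $H$, which is legitimate and perhaps more self-contained, but obliges you to redo that bookkeeping rather than reuse it; the paper's modular argument is shorter for exactly this reason.
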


\begin{proof}
	Given a selective axiomatized topological Ramsey space $(\mathcal{R},\leq,r)$, we fix a set of objects $\mathcal{X} \subseteq \mathcal{R}$ and a basic open set $[a,M] \neq \emptyset$, for which we assume without loss of generality that $a \sqsubset M$. So, we proceed to consider the abstract Kastanas game $\mathcal{G}_{[a,M]}(\mathcal{X})$. 

    \medskip

	First of all, suppose that for all $N\in [a,M]$ there exists an object $H\in [a,N]$ such that $[a,H] \cap \mathcal{X} = \emptyset$. In this case, a winning strategy for the player \textrm{II} in the game $\mathcal{G}_{[a,M]}(\mathcal{X})$ is given as follows: the player \textrm{II} must select the basic open set $[r_{|a|+1}(H), H]$ as the first move, in response to the first throw $N$ executed by the player \textrm{I}. Indeed, consider any complete run in the game $\mathcal{G}_{[a,M]}(\mathcal{X})$ of the form 

    \medskip
	
	\begin{minipage}[c]{0.23\linewidth}
		\begin{center}
			$\mathcal{G}_{[a,M]}(\mathcal{X})$
		\end{center}
	\end{minipage}
	\begin{minipage}[c]{0.75\linewidth}
		\begin{equation*}
			\begin{matrix}
				\textrm{I} &  & N &  & A_{2} & & A_{3} & & \cdots & \\ 
				\textrm{II} &  &  & [ r_{|a|+1}(H), H ] &  & [ a_{2}, B_{2} ] & & [ a_{3}, B_{3} ] &  & \cdots
			\end{matrix}
		\end{equation*}
	\end{minipage}

    \medskip
	
	So, notice that the object $B_{\infty}\in [a,M]$ given by $\{B_{\infty}\} = \bigcap_{k=2}^{\infty} [a_{k}]$ is such that $B_{\infty}\in [a,H]$, thus $B_{\infty} \notin \mathcal{X}$ and consequently the player \textrm{II} wins the game $\mathcal{G}_{[a,M]}(\mathcal{X})$. 

    \medskip
	
	Conversely, suppose that the player \textrm{II} has a winning strategy $\tau$ in the game $\mathcal{G}_{[a,M]}(\mathcal{X})$, and let $N \in [a,M]$ be a fixed but arbitrary object. Now, our goal is to construct a winning strategy $\sigma$ for the player \textrm{I} in the game $\mathcal{G}_{[a,N]}(\mathcal{R}\setminus \mathcal{X})$. 

    \medskip

	In order to successfully accomplish our goal, we must iteratively apply lemma \ref{game-lemma} to recursively construct a complete run in the game $G_{[a,N]}(\mathcal{R}\setminus\mathcal{X})$, with consecutive moves of the player \textrm{II} given by a sequence of basic open sets $\{[a_{n},B_{n}]\}_{n=1}^{\infty}$, for which we define sequences of functions $\{f_{n}\}_{n=1}^{\infty}$ and $\{g_{n}\}_{n=1}^{\infty}$ as well as sequences of objects $\{A_{n}\}_{n=1}^{\infty}$ and $\{E_{f_{n},g_{n}}\}_{n=1}^{\infty}$ in the space $\mathcal{R}$, such that:
		
	\setlist{nolistsep}
	\begin{enumerate}
		\setlength{\itemsep}{0pt}	
		\item[(1)] $[f_{1}(A),g_{1}(A)] = \tau \langle A \rangle$ for every $A\in [a,N]$.
		\item[(2)] $[f_{n+1}(A),g_{n+1}(A)] = \tau \left\langle A_{1}; [f_{1}(A_{1}),g_{1}(A_{1})]; \ldots; A_{n}; [f_{n}(A_{n}),g_{n}(A_{n})]; A  \right\rangle$ for every $A\in [a_{n},B_{n}]$.
		\item[(3)] $A_{1}, E_{f_{1},g_{1}} \in [a,N]$ and $A_{n+1}, E_{f_{n+1},g_{n+1}} \in [a_{n},B_{n}]$.
		\item[(4)] $f_{n}(A_{n}) = a_{n}$ and $B_{n} \in [a_{n}, E_{f_{n},g_{n}}] \subseteq [a_{n}, g_{n}(A_{n})]$. 
	\end{enumerate}

    \medskip
    
    For the first stage of the process, we take functions $f_{1}: [a,N] \rightarrow \mathcal{AR}_{|a|+1}$ and $g_{1}: [a,N] \rightarrow [a,N]$ defined by $[f_{1}(A),g_{1}(A)] = \tau \langle A \rangle$ for each $A\in [a,N]$, so the basic open set $[f_{1}(A),g_{1}(A)]$ is the first move of the player \textrm{II} using the winning strategy $\tau$ in the game $\mathcal{G}_{[a,M]}(\mathcal{X})$, in response to any throw $A$ in $[a,N]$ given by the player \textrm{I}; then, we have $a \sqsubset f_{1}(A)$ and $g_{1}(A) \in [f_{1}(A),A]$ for each $A\in [a,N]$. Therefore, by lemma \ref{game-lemma}, we infer that there exists an object $E_{f_{1},g_{1}} \in [a,N]$ with the property that for every $q\in r_{|a|+1} \text{''}\, [a,E_{f_{1},g_{1}}]$ there is $A\in [a,N]$ such that $f_{1}(A)=q$ and $\emptyset \neq [q,E_{f_{1},g_{1}}] \subseteq [q,g_{1}(A)]$. Thus, we define $\sigma \langle \emptyset \rangle = E_{f_{1},g_{1}}$.

    \medskip
	
	Again, we give a detailed description of the next stage of the construction for the benefit of the reader. In the next stage of the process, suppose that the short partial run given by $E_{f_{1},g_{1}}$ followed by $[a_{1},B_{1}]$, as the first moves of the players \textrm{I} and \textrm{II}, respectively, has been developed in the game $G_{[a,N]}(\mathcal{R}\setminus\mathcal{X})$, so that $B_{1} \in [a_{1},E_{f_{1},g_{1}}]$ and $a \sqsubset a_{1}$ with $|a_{1}| = |a|+1$. Since $a_{1} \in r_{|a|+1} \text{''}\, [a,E_{f_{1},g_{1}}]$, then there is $A_{1}\in [a,N]$ such that $f_{1}(A_{1})=a_{1}$ and $\emptyset \neq [a_{1},E_{f_{1},g_{1}}] \subseteq [a_{1},g_{1}(A_{1})]$, so $B_{1} \in [a_{1},g_{1}(A_{1})]$ and hence $\emptyset \neq [a_{1},B_{1}] \subseteq [a_{1},g_{1}(A_{1})]$. Let $f_{2}: [a_{1},B_{1}] \rightarrow \mathcal{AR}_{|a_{1}|+1}$ and $g_{2}: [a_{1},B_{1}] \rightarrow [a_{1},B_{1}]$ be functions defined by $[f_{2}(A),g_{2}(A)] = \tau \langle A_{1} ; [a_{1},g_{1}(A_{1})] ; A \rangle$ for each $A\in [a_{1},B_{1}]$, so the second move made by the player \textrm{II} using the winning strategy $\tau$ in the game $\mathcal{G}_{[a,M]}(\mathcal{X})$ is $[f_{2}(A),g_{2}(A)]$, in response to any throw $A$ in $[a_{1},B_{1}]$ given by the player \textrm{I}, who had selected $A_{1}$ as previous throw; then, $a_{1} \sqsubset f_{2}(A)$ and $g_{2}(A) \in [f_{2}(A),A]$ for each $A\in [a_{1},B_{1}]$. Therefore, by lemma \ref{game-lemma}, there exists $E_{f_{2},g_{2}} \in [a_{1},B_{1}]$ with the property that for every $q\in r_{|a_{1}|+1} \text{''}\, [a_{1},E_{f_{2},g_{2}}]$ there is $A\in [a_{1},B_{1}]$ such that $f_{2}(A)=q$ and $\emptyset \neq [q,E_{f_{2},g_{2}}] \subseteq [q,g_{2}(A)]$. Thus, we define $\sigma \langle E_{f_{1},g_{1}}; [a_{1},B_{1}] \rangle = E_{f_{2},g_{2}}$.

    \medskip
	
	In this order of ideas, by induction we fix $n\in \mathbb{N}-\{0\}$ and suppose that the $j$-th stage of the process has been constructed for each $j\in\{1,\ldots,n\}$, that is, we are assuming defined the first $n$ stages of the strategy $\sigma$ for the player \textrm{I} in the game $G_{[a,N]}(\mathcal{R}\setminus\mathcal{X})$, where the finite sequence of basic open sets $\{[a_{j},B_{j}]\}_{j=1}^{n}$ corresponds to the first $n$ consecutive moves of the player \textrm{II}, which has associated both functions $\{f_{n}\}_{j=1}^{n}$ and $\{g_{n}\}_{j=1}^{n}$ as well as objects $\{A_{j}\}_{j=1}^{n}$ and $\{E_{f_{j},g_{j}}\}_{j=1}^{n}$ in the space $\mathcal{R}$, satisfying the conditions established in items (1), (2), (3), (4) above, such that $\sigma \langle \emptyset \rangle = E_{f_{1},g_{1}}$ and $\sigma \langle E_{f_{1},g_{1}} ; [a_{1},B_{1}] ; \ldots ; E_{f_{j-1},g_{j-1}} ; [a_{j-1},B_{j-1}] \rangle = E_{f_{j},g_{j}}$ for every $1\leq j\leq n$. 

    \medskip
	
	Now, in the game $G_{[a,N]}(\mathcal{R}\setminus\mathcal{X})$, we consider the partial run consisting of the consecutive moves $E_{f_{1},g_{1}}, \ldots, E_{f_{n},g_{n}}$ executed by the player \textrm{I},  interleaved with the successive throws $[a_{1},B_{1}], \ldots, [a_{n},B_{n}]$ made by the player \textrm{II}; so, in particular we have that $B_{n}\in [a_{n},E_{f_{n},g_{n}}]$ and $a_{n-1} \sqsubset a_{n}$ with $|a_{n}|=|a_{n-1}|+1$, and hence $a_{n} \in r_{|a_{n-1}|+1} \text{''}\, [a_{n-1},E_{f_{n},g_{n}}]$. Then, there exists some object $A_{n}\in [a_{n-1},B_{n-1}]$ such that $f_{n}(A_{n}) = a_{n}$ and $\emptyset \neq [a_{n},E_{f_{n},g_{n}}] \subseteq [a_{n},g_{n}(A_{n})]$, this by virtue of the fact that the object $E_{f_{n},g_{n}}$ was previously constructed using lemma \ref{game-lemma}. Therefore, we deduce that $B_{n} \in [a_{n},g_{n}(A_{n})]$, and hence $\emptyset \neq [a_{n},B_{n}] \subseteq [a_{n},g_{n}(A_{n})]$.

    \medskip
	
	Next, we consider functions $f_{n+1}: [a_{n},B_{n}] \rightarrow \mathcal{AR}_{|a_{n}|+1}$ and $g_{n+1}: [a_{n},B_{n}] \rightarrow [a_{n},B_{n}]$ defined by $[f_{n+1}(A),g_{n+1}(A)] = \tau \langle A_{1} ; [a_{1},g_{1}(A_{1})] ; \ldots; A_{n} ; [a_{n},g_{n}(A_{n})] ; A \rangle$ for each $A\in [a_{n},B_{n}]$, that is, the basic open set $[f_{n+1}(A),g_{n+1}(A)]$ corresponds to the $(n+1)$-th move made by the player \textrm{II} using the winning strategy $\tau$ in the game $\mathcal{G}_{[a,M]}(\mathcal{X})$, in response to any throw $A$ in $[a_{n},B_{n}]$ given by the player \textrm{I}, who had successively selected $A_{1}, \ldots, A_{n}$ as previous throws; then, it follows that $a_{n} \sqsubset f_{n+1}(A)$ and $g_{n+1}(A) \in [f_{n+1}(A),A]$ for each $A\in [a_{n},B_{n}]$. Therefore, by applying lemma \ref{game-lemma}, we deduce the existence of some object $E_{f_{n+1},g_{n+1}} \in [a_{n},B_{n}]$ with the special property that for every $q\in r_{|a_{n}|+1} \text{''}\, [a_{n},E_{f_{n+1},g_{n+1}}]$ there is $A\in [a_{n},B_{n}]$ such that $f_{n+1}(A)=q$ and $\emptyset \neq [q,E_{f_{n+1},g_{n+1}}] \subseteq [q,g_{n+1}(A)]$. In view of the above, we define $\sigma \langle E_{f_{1},g_{1}} ; [a_{1},B{1}] ; \ldots ; E_{f_{n},g_{n}} ; [a_{n},B_{n}] \rangle = E_{f_{n+1},g_{n+1}}$.

    \medskip
	
	In this way, we have successfully constructed, among other considerations, the strategy $\sigma$ for the player \textrm{I} in the game $G_{[a,N]}(\mathcal{R} \setminus \mathcal{X})$, which consists of consecutively choosing the objects of the sequence $\{E_{f_{n},g_{n}}\}_{n=1}^{\infty}$ in each of the moves. Thus, the strategy $\sigma$ is defined as follows:
	\begin{equation*}
		\sigma \langle \emptyset \rangle = E_{f_{1},g_{1}} \,\text{ and }\, \sigma \left\langle E_{f_{1},g_{1}}; [a_{1},B_{1}]; \ldots; E_{f_{n},g_{n}}; [a_{n},B_{n}] \right\rangle = E_{f_{n+1},g_{n+1}} \,\text{ for every } n\in\mathbb{N}-\{0\}.
	\end{equation*}

    \medskip
	
	Now, let $B_{\infty} \in [a,N]$ be the object determined by $\{ B_{\infty} \} = \bigcap_{n=1}^{\infty} [a_{n}]$. To continue, we proceed to consider the following complete run in the game $\mathcal{G}_{[a,M]}(\mathcal{X})$:

    \medskip
	
	\begin{minipage}[c]{0.25\linewidth}
		\begin{center}
			$\mathcal{G}_{[a,M]}(\mathcal{X})$
		\end{center}
	\end{minipage}
	\begin{minipage}[c]{0.75\linewidth}
		\begin{equation*}
			\begin{matrix}
				\textrm{I} &  & A_{1} &  & A_{2} & & A_{3} & & \cdots & \\ 
				\textrm{II} &  &  & [ a_{1}, g_{1}(A_{1}) ] &  & [ a_{2}, g_{2}(A_{2}) ] & & [ a_{3}, g_{3}(A_{3}) ] &  & \cdots
			\end{matrix}
		\end{equation*}
	\end{minipage}

    \medskip
	
	It is worth noting that this complete run is obtained by applying the winning strategy $\tau$ for the player \textrm{II} in the game $\mathcal{G}_{[a,M]}(\mathcal{X})$, since $\tau \left\langle A_{1}; [a_{1},g_{1}(A_{1})]; \ldots; A_{n}; [a_{n},g_{n}(A_{n})]; A_{n+1} \right\rangle =[a_{n+1},g_{n+1}(A_{n+1})]$ for each $n\in \mathbb{N}-\{0\}$. As a result, we deduce that the player \textrm{II} wins the game $\mathcal{G}_{[a,M]}(\mathcal{X})$, thus $B_{\infty}\notin \mathcal{X}$ and hence $B_{\infty}\in \mathcal{R} \setminus \mathcal{X}$. 

    \medskip
	
	On the other hand, by applying the strategy $\sigma$ that has been recursively constructed for the player \textrm{I} in the game $G_{[a,N]}(\mathcal{R} \setminus \mathcal{X})$, where $\{E_{f_{n},g_{n}}\}_{n=1}^{\infty}$ corresponds to the sequence of consecutive moves of the player \textrm{I}, with $\{[a_{n},B_{n}]\}_{n=1}^{\infty}$ being the respective sequence of successive moves of the player \textrm{II}, then we obtain the following complete run in the game $G_{[a,N]}(\mathcal{R} \setminus \mathcal{X})$: 

    \medskip
	
	\begin{minipage}[c]{0.25\linewidth}
		\begin{center}
			$G_{[a,N]}(\mathcal{R} \setminus \mathcal{X})$
		\end{center}
	\end{minipage}
	\begin{minipage}[c]{0.75\linewidth}
		\begin{equation*}
			\begin{matrix}
				\textrm{I} &  & E_{f_{1},g_{1}} &  & E_{f_{2},g_{2}} & & E_{f_{3},g_{3}} & & \cdots & \\ 
				\textrm{II} &  &  & [ a_{1}, B_{1} ] &  & [ a_{2}, B_{2} ] & & [ a_{3}, B_{3} ] &  & \cdots
			\end{matrix}
		\end{equation*}
	\end{minipage}

    \medskip
	
	Consequently, by virtue of the fact that $B_{\infty} \in \mathcal{R} \setminus \mathcal{X}$, it follows that the player \textrm{I} wins the game $G_{[a,N]}(\mathcal{R} \setminus \mathcal{X})$; therefore, we conclude that the strategy $\sigma$ is actually a winning strategy for the player \textrm{I} in the game $G_{[a,N]}(\mathcal{R} \setminus \mathcal{X})$.

    \medskip
	
	Finally, since the player \textrm{I} has a winning strategy in the game $G_{[a,N]}(\mathcal{R} \setminus \mathcal{X})$, then by proposition \ref{strategy-I} we deduce that there exists some object $H\in [a,N]$ such that $[a,H] \subseteq \mathcal{R} \setminus \mathcal{X}$, and thus $[a,H] \cap \mathcal{X} = \emptyset$.
\end{proof}

\smallskip

We are finally ready to present the main result of our article, which corresponds to the generalization of theorem \ref{KastanasTheorem} for the abstract context. Thus, our abstract version of Kastanas theorem characterizes the Ramsey property through the existence of winning strategies in the abstract Kastanas game on the combinatorial structure of selective axiomatized topological Ramsey spaces. 

\smallskip

\begin{theorem} \label{Abstract-Kastanas-Theorem}
	[\textit{Abstract Kastanas Theorem}]. Let $(\mathcal{R},\leq,r)$ be a selective axiomatized topological Ramsey space. Then, for every $\mathcal{X} \subseteq \mathcal{R}$, the following statements are equivalent: 
	\setlist{nolistsep}
	\begin{enumerate}
		\setlength{\itemsep}{0pt}
		\item[(a)] $\mathcal{X}$ is Ramsey.
		\item[(b)] For all $[a,A]\neq \emptyset$, the abstract Kastanas game $\mathcal{G}_{[a,A]}(\mathcal{X})$ is determined.
	\end{enumerate}
\end{theorem}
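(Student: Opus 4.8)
The plan is to reduce the theorem directly to the two characterizations already established: Proposition \ref{strategy-I}, which says that Player \textrm{I} has a winning strategy in $\mathcal{G}_{[a,A]}(\mathcal{X})$ exactly when some $H \in [a,A]$ satisfies $[a,H] \subseteq \mathcal{X}$, and Proposition \ref{strategy-II}, which says that Player \textrm{II} has a winning strategy exactly when every $N \in [a,A]$ admits some $H \in [a,N]$ with $[a,H] \cap \mathcal{X} = \emptyset$. Determinacy of the game then translates into a combinatorial dichotomy about the basic open sets below $[a,A]$, and this dichotomy is precisely what the Ramsey property of Definition \ref{RamseySet} encodes. Note that selectivity enters only through Proposition \ref{strategy-II}; Proposition \ref{strategy-I} needs merely a topological Ramsey space.

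For the implication (b) $\Rightarrow$ (a), I would fix a nonempty $[a,A]$ and use the hypothesized determinacy to split into two cases. If Player \textrm{I} has a winning strategy, Proposition \ref{strategy-I} yields $B = H \in [a,A]$ with $[a,B] \subseteq \mathcal{X}$. If instead Player \textrm{II} has a winning strategy, then choosing any $N \in [a,A]$ and applying Proposition \ref{strategy-II} yields $H \in [a,N] \subseteq [a,A]$ with $[a,H] \cap \mathcal{X} = \emptyset$, so $B = H$ witnesses the second alternative of Definition \ref{RamseySet}. In either case the required $B \in [a,A]$ exists, so $\mathcal{X}$ is Ramsey.

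For the harder implication (a) $\Rightarrow$ (b), I would fix a nonempty $[a,A]$ and distinguish whether there exists $H \in [a,A]$ with $[a,H] \subseteq \mathcal{X}$. If such an $H$ exists, Proposition \ref{strategy-I} gives Player \textrm{I} a winning strategy and the game is determined. If no such $H$ exists, I claim Player \textrm{II} wins; to invoke Proposition \ref{strategy-II} I must verify that every $N \in [a,A]$ carries some $H \in [a,N]$ with $[a,H] \cap \mathcal{X} = \emptyset$. This is where the Ramsey hypothesis is used a second time: since $N \in [a,N]$ we have $[a,N] \neq \emptyset$, so the Ramsey property applied to $[a,N]$ produces $B \in [a,N]$ with either $[a,B] \subseteq \mathcal{X}$ or $[a,B] \cap \mathcal{X} = \emptyset$; because $B \in [a,N] \subseteq [a,A]$, the first alternative is excluded by the standing case assumption, leaving $[a,B] \cap \mathcal{X} = \emptyset$, which is the desired witness $H = B$.

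The step I expect to require the most care is this last verification: one must observe that ruling out the positive alternative over the \emph{whole} of $[a,A]$ forces the negative alternative to recur on a member of each $[a,N]$, which is exactly the uniform hypothesis demanded by Proposition \ref{strategy-II}. The only other point worth flagging explicitly is the triviality $[a,N] \neq \emptyset$ for $N \in [a,A]$, which legitimizes applying the Ramsey property and Proposition \ref{strategy-II} to $N$; once this is in place, both directions close without further combinatorial work, since all the recursive constructions have been absorbed into the two preparatory propositions.
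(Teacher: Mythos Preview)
Your proposal is correct and follows essentially the same approach as the paper: both directions are reduced to Propositions \ref{strategy-I} and \ref{strategy-II}, with the Ramsey property supplying the dichotomy needed to pass between them. The only cosmetic difference is in (a) $\Rightarrow$ (b): the paper argues contrapositively (assuming Player \textrm{II} has no winning strategy and producing one for Player \textrm{I}), while you do the equivalent direct case split on whether some $H\in[a,A]$ satisfies $[a,H]\subseteq\mathcal{X}$.
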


\begin{proof}
	Let $(\mathcal{R},\leq,r)$ be a selective axiomatized topological Ramsey space, and fix both a set of objects $\mathcal{X} \subseteq \mathcal{R}$ and a basic open set $[a,A] \neq \emptyset$. 

    \medskip
	
	If the game $\mathcal{G}_{[a,A]}(\mathcal{X})$ is determined, then both proposition \ref{strategy-I} and proposition \ref{strategy-II} clearly imply that there exists some $H\in [a,A]$ such that either $[a,H] \subseteq \mathcal{X}$ or $[a,H] \cap \mathcal{X} =\emptyset$, so that $\mathcal{X}$ is a Ramsey set. 

    \medskip
	
	Conversely, suppose that the set $\mathcal{X}$ is Ramsey and that the player \textrm{II} does not have a winning strategy in the game $\mathcal{G}_{[a,A]}(\mathcal{X})$; thus, by proposition \ref{strategy-II}, there exists some $B\in [a,A]$ such that $[a,H] \cap \mathcal{X} \neq \emptyset$ for all $H\in[a,B]$. Since $\mathcal{X}$ is Ramsey, then for some $H\in [a,B] \subseteq [a,A]$ we have $[a,H] \subseteq \mathcal{X}$, so that the player \textrm{I} has a winning strategy in the game $\mathcal{G}_{[a,A]}(\mathcal{X})$, by virtue of proposition \ref{strategy-I}. Therefore, we conclude that the game $\mathcal{G}_{[a,A]}(\mathcal{X})$ is determined. 
\end{proof}

\smallskip

As a final comment, we mention that in view of these results it might be interesting to explore the Ramsey property on topological Ramsey spaces under determinacy axioms.

\section*{Acknowledgments}

The first author thanks the Fields Institute of the University of Toronto for its hospitality and partial support during the preparation of this article. 

\smallskip

The second author thanks the Mathematics Institute of the University of Barcelona for its hospitality and partial support during the preparation of this article.

\smallskip

The authors thank Natasha Dobrinen and Jos\'e Mijares for several invaluable conversations on the topics and results of this article.

\kern1em
\Addresses


\begin{thebibliography}{99}

\setlength{\itemsep}{0pt}

\bibitem{Carlson1} Carlson, T. (1987). \textit{An infinitary version of Graham-Leeb-Rothschild theorem}. Journal of Combinatorial Theory, Serie A. Vol. 44, pp. 22$-$33.

\bibitem{Carlson2} Carlson, T. (1988). \textit{Some unifying principles in Ramsey theory}. Discrete Mathematics. Vol. 68, pp. 117$-$169. 

\bibitem{Carlson-Simpson1} Carlson, T. $\&$ Simpson, S. (1984). \textit{A dual form of Ramsey's theorem}. Advances in Mathematics. Vol. 53, pp. 265$-$290.

\bibitem{Carlson-Simpson2} Carlson, T. $\&$ Simpson, S. (1990). \textit{Topological Ramsey theory}. In: `Mathematics of Ramsey Theory' (Eds: Nesetril, J. $\&$ R\"odl, V.). Berlin: Springer-Verlag. pp. 172$-$183.

\bibitem{DiPrisco-Mijares-Nieto} Di Prisco, C.A.; Mijares, J.G $\&$ Nieto, J.E. (2017). \textit{Local Ramsey theory: an abstract approach}. Mathematical Logic Quarterly. Vol. 63(5), pp. 384$-$396.

\bibitem{DiPrisco-Mijares-Uzcategui} Di Prisco, C.A.; Mijares, J.G. $\&$ Uzc\'{a}tegui, C.E. (2012). \textit{Ideal games and Ramsey sets}. Proceedings of the American Mathematical Society. Vol. 140(7), pp. 2255$-$2265.

\bibitem{Dobrinen1} Dobrinen, N. (2016). \textit{High dimensional Ellentuck spaces and initial chains in the Tukey structure of non-p-points}. Journal of Symbolic Logic. Vol. 81(1), pp. 237$-$263.

\bibitem{Dobrinen2} Dobrinen, N. (2016). \textit{Infinite-dimensional Ellentuck spaces and Ramsey-classification theorems}. Journal of Mathematical Logic. Vol. 16(1), pp. 1650003.

\bibitem{Dobrinen-Todorcevic1} Dobrinen, N. $\&$ Todor\v{c}evi\'{c}, S. (2014). \textit{A new class of Ramsey-classification theorems and their applications in the Tukey theory of ultrafilters, Part 1}. Transactions of the American Mathematical Society. Vol. 366(3), pp. 1659$-$1684. 

\bibitem{Dobrinen-Todorcevic2} Dobrinen, N. $\&$ Todor\v{c}evi\'{c}, S. (2015). \textit{A new class of Ramsey-classification theorems and their applications in the Tukey theory of ultrafilters, Part 2}. Transactions of the American Mathematical Society. Vol. 367(7), pp. 4627$-$4659. 

\bibitem{Dobrinen-Mijares-Trujillo} Dobrinen, N.; Mijares, J.G. $\&$ Trujillo, T. (2017). \textit{Topological Ramsey spaces from Fra\"\i ss\'e classes, Ramsey-classification theorems, and initial structures	in the Tukey types of p-points}. Archive for Mathematical Logic. Vol. 57, pp. 733$-$782.  

\bibitem{Dobrinen-Zucker} Dobrinen, N. $\&$ Zucker, A. (submitted). \textit{Infinite-dimensional Ramsey theory for binary free amalgamation classes}. arXiv:2303.04246 

\bibitem{Ellentuck} Ellentuck, E. (1974). \textit{A new proof that analytic sets are Ramsey}. Journal of Symbolic Logic. Vol. 39(1), pp. 163$-$165.

\bibitem{Erdos-Rado} Erd\H{o}s, P. $\&$ Rado, R. (1952). \textit{Combinatorial theorems on classifications of subsets of a given set}. Proceedings of the London Mathematical Society. Vol. 2(3), pp. 417$-$439.

\bibitem{Farah} Farah, I. (1998). \textit{Semiselective coideals}. Mathematika. Vol. 45(1), pp. 79$-$103.

\bibitem{Galvin-Prikry} Galvin, F. $\&$ Prikry, K. (1973). \textit{Borel sets and Ramsey's theorem}. Journal of Symbolic Logic. Vol. 38(2), pp. 193$-$198.

\bibitem{Gowers1} Gowers, W.T. (1992). \textit{Lipschitz functions on classical spaces}. European Journal of Combinatorics. Vol. 13, pp. 141$-$151.

\bibitem{Halbeisen} Halbeisen, L. (2017). \textit{Combinatorial set theory}. (2nd. ed.). London: Springer-Verlag.

\bibitem{Jech} Jech, T. (2003). \textit{Set theory: the third millennium edition}. Berlin: Springer.

\bibitem{Kastanas} Kastanas, I. (1983). \textit{On the Ramsey property for sets of reals}. Journal of Symbolic Logic. Vol. 48(4), pp. 1035$-$1045.

\bibitem{Kawach-Todorcevic} Kawach, J. $\&$ Todor\v{c}evi\'{c}, S. (2022). \textit{Topological Ramsey spaces of equivalence relations and a dual Ramsey theorem for countable ordinals}. Advances in Mathematics. Vol. 396, pp. 108194.

\bibitem{Matet1} Matet, P. (1993). \textit{Happy families and completely Ramsey sets}. Archive for Mathematical Logic. Vol. 32, pp. 151$-$171.

\bibitem{Matet2} Matet, P. (2000). \textit{A short proof of Ellentuck's theorem}. Proceedings of the American Mathematical Society. Vol. 129(4), pp. 1195$-$1197.

\bibitem{Mathias} Mathias, A.R.D. (1977). \textit{Happy families}. Annals of Mathematical Logic. Vol. 12(1), pp. 59$-$111.

\bibitem{Milliken1} Milliken, K. (1975). \textit{Ramsey's theorem with sums or unions}. Journal of Combinatorial Theory, Series A. Vol. 18, pp. 276$-$290.

\bibitem{Milliken3} Milliken, K. (1981). \textit{A partition theorem for the infinite subtrees of a tree}. Transactions of the American Mathematical Society. Vol. 263(1), pp. 137$-$148.

\bibitem{NashWilliams} Nash-Williams, C. (1965). \textit{On well-quasi-ordering transfinite sequences}. Mathematical Proceedings of the Cambridge Philosophical Society. Vol. 61, pp. 33$-$39.

\bibitem{Ramsey} Ramsey, F.P. (1930). \textit{On a problem of formal logic}. Proceedings of the London Mathematical Society. Vol. 30(4), pp. 264$-$286.

\bibitem{Silver} Silver, J. (1970). \textit{Every analytic set is Ramsey}. Journal of Symbolic Logic. Vol. 35(1), pp. 60$-$64.

\bibitem{Todorcevic(BookTopology)} Todor\v{c}evi\'{c}, S. (1997). \textit{Topics in topology}. Berlin: Springer-Verlag.

\bibitem{Todorcevic(BookRamsey)} Todor\v{c}evi\'{c}, S. (2010). \textit{Introduction to Ramsey spaces}. New Jersey: Princeton University Press.

\end{thebibliography}
\end{document}